\documentclass[a4paper,10pt]{article}

\usepackage[charter]{mathdesign}

\usepackage{amsmath,amsthm}
\usepackage{dsfont}

\usepackage{xcolor}

\newtheorem{theorem}{Theorem}[section]

\newtheorem{lemma}[theorem]{Lemma}
\newtheorem{corollary}[theorem]{Corollary}
\newtheorem{remark}[theorem]{Remark}
\newtheorem{definition}[theorem]{Definition}

\newtheorem{assumption}[theorem]{Assumption}

\newcommand{\N}{\mathbb{N}}
\newcommand{\Z}{\mathbb{Z}}
\newcommand{\Q}{\mathbb{Q}}
\newcommand{\R}{\mathbb{R}}

\newcommand{\A}{\mathcal{A}}
\newcommand{\B}{\mathcal{B}}
\newcommand{\C}{\mathcal{C}}
\newcommand{\F}{\mathcal{F}}
\newcommand{\G}{\mathcal{G}}
\newcommand{\X}{\mathcal{X}}
\newcommand{\Y}{\mathcal{Y}}
\newcommand{\M}{\mathcal{M}}

\renewcommand{\P}{\mathbb{P}}
\newcommand{\E}{\mathbb{E}}

\newcommand{\law}[1]{\text{Law}(#1)}

\newcommand{\Pas}{\text{a.s.}}
\newcommand{\ind}{\mathds{1}}

\newcommand{\eps}{\varepsilon}
\newcommand{\la}{\lambda}
\newcommand{\ga}{\gamma}
\newcommand{\ka}{\kappa}
\newcommand{\dtv}{d_{\text{TV}}}

\newcommand{\dint}{\mathrm{d}}  

\newcommand{\lfrf}[1]{\lfloor #1\rfloor}

\begin{document}
	
	\title{Markov chains in random environment with applications in queueing theory and machine learning
		\thanks{Both authors benefitted from the support of the ``Lend\"ulet'' grant LP 2015-6 of 
		the Hungarian Academy of Sciences.
		The second author was also supported by the NKFIH (National Research, Development and Innovation Office, Hungary) 
grant KH 126505.	
	}}

	\author{
		Attila Lovas\thanks{Alfr\'ed R\'enyi Institute of Mathematics and Budapest University of Technology and Economics, Budapest, Hungary}
		\and 
		Mikl\'os R\'asonyi\thanks{Alfr\'ed R\'enyi Institute of Mathematics, Budapest, Hungary}}
	
	\date{\today}
	
	\maketitle
	
	\begin{abstract}
			We prove the existence of limiting distributions for a large class of Markov chains 
		on a general state space in a random environment. We assume suitable versions of
		the standard drift and minorization conditions. In particular, the system dynamics
		should be contractive on the average with respect to the Lyapunov function and 
		large enough small sets should exist with large enough minorization constants. 
		We also establish that a law of large numbers
		holds for bounded functionals of the process. Applications to queuing systems, to machine learning algorithms 
		and to autoregressive processes are presented.
	\end{abstract}
	
	\section{Introduction}\label{sec:intro}

	Markov chains in stationary random environments (MCREs) with a general (not necessarily countable) state
	space appear in several branches of applied probability. Rough volatility models of mathematical finance (see \cite{cr,gjr}),
	queuing models with non-i.{i}.{d}.\ service times (see \cite{borovkov} and Section
	\ref{sec:queing} below) and sequential Monte Carlo methods (see Section \ref{sec:langevin} below) are
	prominent examples. It seems that existing studies on the ergodic theory of MCREs (such
	as \cite{kifer1,kifer2,sep,stenflo}) impose conditions that exclude the treatment of relevant models from the applications above. 
	
	The article \cite{rasonyi2018}, introducing new tools, managed 
	to establish the existence of limiting laws and ergodic theorems for certain 
	classes of MCREs which satisfy suitable versions of the standard drift 
	and minorization conditions of Markov chain theory (as presented e.g.\ in \cite{mt}).
	
	Assumption 2.2 of \cite{rasonyi2018}, however, severely restricted the scope of applications by requiring that
	the system dynamics is contractive whatever the state of the random environment is. The present study
	aims to remove this restriction: we require only that process dynamics is contractive
	\emph{on the average}, in the sense of Assumption \ref{as:LT} below.
	
	In Section \ref{sec:main} our main results are stated in an abstract framework. Two applications
	are worked out in detail in Sections \ref{sec:queing} and \ref{sec:langevin}. In 
	Section \ref{sec:queing}, we study a queuing model, where service times are not i.i.d. 
	In Section \ref{sec:langevin}, we treat the stochastic gradient Langevin 
	dynamics with stationary data, a sampling algorithm with important applications in
	machine learning, see \cite{wt,6}. Some ramifications are presented
	in Section \ref{sec:ram} and they are applied to linear systems in Section
	\ref{sec:lin}. Proofs are presented in Section \ref{sec:proofs}.

	\bigskip
	\noindent
	{\bf Notations and conventions.} Let $\R_{+}:=\{x\in\R:\, x\geq 0\}$
	and $\mathbb{N}^{+}:=\{n\in\mathbb{N}:\ n\geq 1\}$.
	Let $(\Omega,\F,\P)$ be a probability space. We denote by $\E[X]$
	the expectation of a random variable $X$. For $1\le p<\infty$, $L^p$ is used to denote the usual space of $p$-integrable real-valued random variables and $\Vert X \Vert_p$ stands for the $L^p$-norm of a random variable $X$.
	
	We fix a standard Borel space $(\X,\B)$. The set of
	probability Borel measures on $(\X,\B)$ are denoted by $\M_1$.
	The total variation metric on $\M_1$ is defined by 
	\begin{equation*}
	\dtv (\mu_1,\mu_2)=|\mu_1-\mu_2|(\X),\quad \mu_1,\mu_2\in\M_1,
	\end{equation*}
	where $|\mu_1-\mu_2|$ denotes the total variation of the signed measure $\mu_1-\mu_2$. 
	
	For $\mu_1,\mu_2\in\M_1$, let $\C (\mu_1,\mu_2)$ denotes the set of probability measures on $\B\otimes\B$ such that its respective marginals are $\mu_1$ and $\mu_2$. Then, $\dtv (\mu_1,\mu_2)$ can be expressed as twice the optimal transportation cost, between $\mu_1$ and $\mu_2$, that is
	\begin{equation}\label{eq:dtv}
	\frac{1}{2}\dtv (\mu_1,\mu_2) = \inf_{\ka\in\C (\mu_1,\mu_2)}
	\int_{\X\times\X} \ind_{x\ne y} \, \ka (\dint x, \dint y).
	\end{equation}

	In the sequel, we employ the convention that $\sum_{k}^{l}=0$ and $\prod_{k}^{l}=1$ whenever $k,l\in\Z$, $k>l$. 
	Lastly, $\langle \cdot\mid \cdot\rangle$ denotes the standard Euclidean inner product
	on finite dimensional vector spaces. For example, on $\R^d$, $\langle x\mid y\rangle = \sum_{i=1}^{d} x_i y_i$.
	
	\section{Main results}\label{sec:main}
	
	Let $(\Y,\A)$ be a measurable space and $Y:\Z\times\Omega\to\Y$ a strongly stationary $\Y$-valued 
	stochastic process which we interpret as the environment which influences the evolution of
	our main process of interest ($X$ below). 
	We consider a parametric family of stochastic kernels, that is a map $Q:\Y\times\X\times\B\to [0,1]$, where for all $B\in\B$ the function $(y,x)\mapsto Q(y,x,B)$ is $\A\otimes\B$-measurable and for all $(y,x)\in\Y\times\X$, $B\mapsto Q(y,x,B)$ is a 
	probability measure on $\B$.
	
	We assume that we are given the $\X$-valued process $X_t$, $t\in\N$ such that $X_0=x_0\in\X$
	is fixed and
	\begin{equation}\label{eq:Xdef}
	\P (X_{t+1}\in B\mid\F_t)=Q(Y_t,X_t,B)\quad\P-\Pas,\,t\in\N,
	\end{equation}
	where the filtration is
	\begin{equation*}
	\F_t = \sigma (X_s,\, 0\le s\le t;\, Y_s,\, s\in\mathbb{Z}),\, t\in\N .
	\end{equation*}
	Let $\mu_t\in\M_1$ denote the law of $X_t$ for $t\in\N$.
	
	We aim to study the ergodic properties of $X_t$ and the convergence of $\mu_t$ to a limiting law as $t\to\infty$ under various assumptions.
	
	\begin{definition}\label{def:act}
	Let $P:\X\times\B\to [0,1]$ be a probabilistic kernel. For
	a bounded measurable function $\phi:\X\to\R$, we define
	\begin{equation*}
	[P\phi](x)=\int_\X \phi(z) P(x,\dint z),\,x\in\X.
	\end{equation*}
	This definition makes sense for any non-negative measurable $\phi$, too.
	\end{definition}
	Consistently with Definition \ref{def:act}, for $y\in\Y$, $Q(y)\phi$ will refer to the action of the kernel $Q(y,\cdot,\cdot)$ on $\phi$.
	
	\begin{assumption}\label{as:drift}(Drift condition)
		Let $V:\X\to\R_{+}$ be a measurable function.
	We assume that there are measurable functions $K,\ga:\Y\to (0,\infty)$ such that, for all $x\in\X$ and $y\in\Y$,
	\begin{equation*}
	[Q(y)V](x)\le \ga (y)V(x)+K(y).
	\end{equation*}
	Furthermore, we may and will assume that $K(\cdot)\ge 1$.
	\end{assumption}

In contrast with the drift condition used in \cite{rasonyi2018} 
(cf. Assumption 2.2 on page 2), the domain of $\ga$ is $\Y$ and not $\N$.{}
Moreover, it is possible that $\ga (y)\geq 1$ holds for certain $y\in\Y$. This relaxation allows the inclusion
of several models that were intractable using the results of \cite{rasonyi2018}. 
Although $\ga(y)\geq 1$ may hold, in the next assumption we require that the system
dynamics, on average, is contracting in the long run.
	
	\begin{assumption}\label{as:LT}(Long-time contractivity condition)
		We assume that	
		\begin{equation*}
		\bar{\ga}:=\limsup_{n\to\infty}\E^{1/n}\left(K(Y_0)\prod_{k=1}^{n}\ga (Y_k)\right) < 1.
		\end{equation*}
	\end{assumption}

	The next assumption stipulates the existence of suitable ``small sets''. It corresponds to
	Assumption 2.5 in \cite{rasonyi2018} but we need a different formulation here.

	\begin{assumption}\label{as:minor}(Minorization condition)
	Let $\la (\cdot)$, $K(\cdot)$ be as in Assumption \ref{as:drift}.
	We assume that for some $0<\eps<1/\bar{\ga}^{1/2}-1$, there is a measurable function $\alpha:\Y\to [0,1)$ and a probability kernel $\ka: \Y\times\B\to [0,1]$ such that, for all $y\in\Y$ and $A\in\B$,
	\begin{equation}\label{maudit}
		\inf_{x\in V^{-1}([0,R(y)])} Q(y,x,A)\ge (1-\alpha (y)) 
		\ka (y,A), \text{ where } R(y)=\frac{2K(y)}{\eps\ga (y)}	
	\end{equation}
	and $V^{-1}([0,R(y)])\neq\emptyset$.
	\end{assumption}
	
	\begin{remark}{\rm If there is $x\in\mathcal{X}$ with $V(x)=0$ then
	$V^{-1}([0,R(y)])\neq\emptyset$ automatically holds.} 
	\end{remark}

	The larger $\alpha(y)$ is, the weaker condition \eqref{maudit} is. Hence one needs to
	control the probability of $\alpha(Y_{0})$ approaching $1$ and enforce the 
	``smallness'' of $\alpha(Y_{0})$. 
	This is the content of the following condition
	which will play a very important role in our convergence estimates.
	\begin{assumption}(Smallness condition) \label{as:myas}
		There exists $0<\theta<1$ such that
		\begin{equation*}
			\lim_{n\to\infty} \E^{1/n^\theta}\left(\alpha (Y_0)^n\right) = 0.
		\end{equation*}
	\end{assumption}

	\begin{remark}{\rm Assumption \ref{as:myas} clearly holds if $\alpha(\cdot)\equiv \alpha\in (0,1)$
	is a constant. To see a simple non-constant example, let $\mathcal{Y}:=\N_{+}$
	and assume that 
		\begin{equation}
		\inf_{x\in V^{-1}([0,R(y)])} Q(k,x,A)\ge \frac{1}{k^{\chi}} 
		\ka (k,A)	
	\end{equation}
	holds for a suitable kernel $\ka$, for all $k\in\N^{+}$ with some $\chi\geq 1$.{}
	Note that $1-e^{-1/k^{\chi}}\leq 1/k^{\chi}$ hence inequality \eqref{maudit} in the minorization
	condition holds with $\alpha(k)=e^{-1/k^{\chi}}$. Let $Y_{0}$ satisfy
	$P(Y_{0}=k)=e^{-\delta k}c_{\delta}$, $k\in\N_{+}$
	with a suitable normalizing constant $c_{\delta}>0$.
	It follows that
	\begin{eqnarray*}
	E[\alpha(Y_{0})^{n}]&\leq& c_{\delta}\sum_{k=1}^{\lfloor {n}^{\frac{1}{2\chi}}\rfloor} e^{-n/k^{\chi}}
	e^{-{\delta}k} + c_{\delta}\sum_{k=\lfloor {n^{\frac{1}{2\chi}}}\rfloor+1}^{\infty} e^{-n/k^{\chi}}
	e^{-{\delta}k}\\
	&\leq& c_{\delta}\lfloor {n}^{\frac{1}{2\chi}}\rfloor e^{-{n}^{\frac{1}{2}}}
+ C_{\delta} e^{-\delta (\lfloor {n}^{\frac{1}{2\chi}}\rfloor+1) }
	\end{eqnarray*}
for a suitable constant $C_{\delta}>0$. Then Assumption \ref{as:myas} clearly holds
with any $0<\theta<\frac{1}{2\chi}$.	
}
		
	\end{remark}
	
	Now come the main results of the present paper: with the above presented assumptions, 
	the law of $X_t$ converges to a limiting law as $t\to\infty$, moreover, bounded functionals of
	$X_t$ admit ergodic behavior provided that $Y_t$ is ergodic.
	\begin{theorem}\label{thm:TV}
		 Under Assumptions \ref{as:drift}, \ref{as:LT}, \ref{as:minor} and \ref{as:myas}, there exists a universal probability law $\mu_{\ast}$, independent of $x_0$, such that $\mu_{N}\to\mu_{\ast}$ in
		 total variation as $N\to\infty$. 
		 
		 More precisely, for any $1/2<\la<1$, there exist $c(\la),\nu (\la)>0$ such that
		 \begin{align}
		 	& \dtv (\mu_N,\mu_{\ast}) \nonumber \le
		 	\\ & 2\sum_{n=N}^{\infty}
		 	\left[
		 	\E\left(\max_{0\le k<\lfrf{n^{1/3}}^3} \alpha (Y_k)^{\lfrf{n^{1/3}}-1}\right)
		 	+\bar{\ga}^{\left(\la-\frac{1}{2}\right)\lfrf{n^{1/3}}^2-n^{1/3}}
		 	+cn^{2/3} e^{-\nu n^{1/3}}
		 	\right] \label{zoroaster}
		 \end{align}
	holds for all $N\in\N$.
	\end{theorem}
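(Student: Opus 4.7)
The plan is to show that $(\mu_N)_N$ is Cauchy in total variation, which produces the limit $\mu_\ast$ independent of $x_0$, and then to bound $\dtv(\mu_N, \mu_\ast)$ by the tail sum $\sum_{n \ge N} \dtv(\mu_n, \mu_{n+1})$. Using the strong stationarity of $Y$, the distance $\dtv(\mu_n, \mu_{n+1})$ equals (twice) the minimum coupling failure probability between two copies of the chain driven by the same environment $Y$ but with starting times shifted by one; the task then reduces to constructing an explicit coupling whose failure probability is bounded term-by-term by the bracket in \eqref{zoroaster}.

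The core device is a coupling built from Assumption \ref{as:minor}. At every time $t$ such that $V(X_t^{(1)}) \le R(Y_t)$ and $V(X_t^{(2)}) \le R(Y_t)$, the minorization lower bound $(1-\alpha(Y_t)) \ka(Y_t, \cdot)$ allows me to decree that, conditionally on $\F_t$, with probability at least $1-\alpha(Y_t)$ both chains sample their next state from a common draw from $\ka(Y_t, \cdot)$, hence coincide from then on. Within an outer time block of length $\lfrf{n^{1/3}}^3$, I would identify roughly $\lfrf{n^{1/3}}$ disjoint coupling opportunities; conditionally on both trajectories always lying in the small set during these opportunities, the probability that none succeeds is bounded by $\max_{0 \le k < \lfrf{n^{1/3}}^3} \alpha(Y_k)^{\lfrf{n^{1/3}}-1}$, which supplies the first term of \eqref{zoroaster}.

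To ensure that the chains do spend enough time in the small set, I would iterate Assumption \ref{as:drift}: writing $V(X_{t+1}) \le \ga(Y_t) V(X_t) + K(Y_t) + M_{t+1}$ with a martingale remainder, unfolding produces products $\prod_k \ga(Y_k)$ multiplied by $V(x_0)$ or $K(Y_j)$. A Cauchy--Schwarz split with exponent $\la \in (1/2, 1)$ will bound
\[
\P\!\left(V(X_t) \ga(Y_t) > 2 K(Y_t)/\eps \right) \le \eps^{\la}\, \E^{1/2}\!\left[ K(Y_0)\prod_{k=1}^{m} \ga(Y_k)^{2\la} \right]
\]
for a sub-block of length $m \asymp \lfrf{n^{1/3}}^2$; by Assumption \ref{as:LT} the right-hand side is at most $\bar\ga^{(\la - 1/2)\lfrf{n^{1/3}}^2 - n^{1/3}}$ for large $n$, which accounts for the second summand in \eqref{zoroaster}. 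The residual term $cn^{2/3} e^{-\nu n^{1/3}}$ arises when one converts Assumption \ref{as:myas} to a uniform-in-$n$ bound via Markov's inequality and sums its contribution over the $\asymp n^{2/3}$ indices inside the block, choosing $\nu$ so small that the exponent $n^{\theta}$ afforded by Assumption \ref{as:myas} dominates.

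The principal difficulty lies in balancing the three nested scales (attempts of length $\lfrf{n^{1/3}}$, contractive sub-blocks of length $\lfrf{n^{1/3}}^2$, outer blocks of length $\lfrf{n^{1/3}}^3$) so that all three error contributions have comparable order, and, more importantly, in replacing pointwise contractivity with the averaged one of Assumption \ref{as:LT}. Since $\ga(y) \ge 1$ is permitted, individual steps may expand $V$; only an $n$-th root of an expected product guarantees a strict contraction. This forces the use of sub-blocks of length $\asymp n^{2/3}$ before a useful power of $\bar\ga$ can be extracted and, correspondingly, $\asymp n^{1/3}$ independent coupling attempts, thereby explaining the cubic-root exponent throughout \eqref{zoroaster}.
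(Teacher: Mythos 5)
Your high-level architecture matches the paper's: represent the chain by random maps driven by the environment, pass to a frozen-environment auxiliary process $Z_{s,t}^{x,\mathbf{y}}$, prove that $(\mu_n)$ is Cauchy via a tail sum of $\dtv(\mu_n,\mu_{n+1})$, bound each consecutive difference by a coupling failure probability, and use $\approx n^{1/3}$ coupling attempts, each preceded by a contracting sub-block of length $\approx n^{2/3}$, inside an outer block of length $n$. Your explanation of why the cube-root scale is forced ($\ga(\cdot)\ge 1$ is allowed; contraction only emerges after taking $n$-th roots of expected products) is exactly the right insight, and the first bracket term $\max\alpha^{\lfrf{n^{1/3}}-1}$ is correctly attributed to the Doeblin-type splitting at the coupling opportunities.

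However, two of your three error-term mechanisms do not match what actually produces \eqref{zoroaster}, and as written would not deliver those terms. First, the term $\bar{\ga}^{(\la-\frac{1}{2})\lfrf{n^{1/3}}^2-n^{1/3}}$ does not come from a Cauchy--Schwarz split producing $\E^{1/2}[K\prod\ga^{2\la}]$ (this formula has mismatched exponents and would require control of $\E[\prod\ga^{2\la}]$, which Assumption~\ref{as:LT} does not give). In the paper, $\la$ parametrizes the ``typical'' trajectory sets $B_{N,M}^{\la}$ on which the \emph{deterministic} block product satisfies $\prod_l\ga(y_l)<\bar{\ga}^{\la\lfrf{N^{1/M}}}$ (Lemma~\ref{lem:cut}), while the $-\tfrac{1}{2}$ arises from the slack $\eps<1/\sqrt{\bar\ga}-1$ in Assumption~\ref{as:minor}: outside $\overline{D}(y)$ one gets $V(Z^1)+V(Z^2)\mapsto \ga(y)(1+\eps)(\cdots)\le\tfrac{\ga(y)}{\sqrt{\bar\ga}}(\cdots)$, so each step leaves a $1/\sqrt{\bar\ga}$ factor. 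Combining the block product bound with the accumulated $\sqrt{\bar\ga}^{-b}$ yields the exponent $\la-\tfrac{1}{2}$ in Lemma~\ref{lem:sigma}; there is no Cauchy--Schwarz step. Second, you attribute $cn^{2/3}e^{-\nu n^{1/3}}$ to Assumption~\ref{as:myas} via Markov, but that term has nothing to do with $\alpha$. It is $\P(\mathbf{Y}\notin A_n^{\la})$: the probability that the trajectory fails to belong to the good set (Lemmas~\ref{lem:cut} and \ref{lem:dtv}), controlled entirely by Assumption~\ref{as:LT} via Markov's inequality applied to $\E[\prod\ga(Y_l)]$ and to the expected $V$-values. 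Assumption~\ref{as:myas} enters only later, in Lemma~\ref{lem:max}, to ensure the \emph{summability} over $n$ of the $\alpha$-max term; it plays no role in deriving the pointwise bracket. You also omit the structural step of proving the coupling bound for a fixed typical $\mathbf{y}$ first (with $\mu_n(\mathbf{y},\cdot)=\law{Z_{0,n}^{x_0,S^{-n+1}\mathbf{y}}}$ and $\mu_n(A)=\E[\mu_n(\mathbf{Y},A)]$) and averaging over $\law{Y}$ only at the end; this conditioning is what allows averaged rather than pointwise contractivity to be exploited, and without it the chain of estimates does not go through.
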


	\begin{remark}\label{simplify}{\rm To help decipher the expression \eqref{zoroaster}, we remark that setting $\la=3/4$,{}
	\eqref{zoroaster} is easily seen to be dominated by
$$
2\sum_{n=N}^{\infty}
		 	\left[
		 	\E\left([\max_{0\le k\leq n} \alpha (Y_k)]^{n^{1/3}-2}\right)
		 	+\bar{\ga}^{n^{2/3}/8}
		 	+cn^{2/3} e^{-\nu n^{1/3}}
		 	\right]
$$		
for $N\geq 216$, with suitable constants $c,\nu>0$. In the particular case where $\alpha(\cdot)$ is constant,
the latter expression is $O(e^{-\zeta n^{1/3}})$ for some $\zeta>0$, as easily seen.}
	\end{remark}

To make the explanation self-contained, we give the definition of ergodic environment which we will use in the sequel. Let $\Y^\Z$ be the space of trajectories, $\A^{\otimes\Z}$ the $\sigma$-algebra generated by cylinder sets, and $\law{Y}$ the distribution on $\Y^\Z$ induced by $Y$. We introduce the usual \emph{left shift operation} $S:\Y^\Z\to\Y^\Z$ acting on trajectories like $(S\mathbf{y})_j=\mathbf{y}_{j+1}$, $j\in\Z$. 
As the environment is assumed to be strongly stationary, the probability $\law{Y}$ is invariant under the transformation $S$ and thus $(\Y^\Z,\A^{\otimes\Z},\law{Y},S)$ forms a dynamical system.
We say that an event $A\in\A^{\otimes\Z}$ is invariant iff $S^{-1}(A)=A$ and the process $Y$
is \emph{ergodic} if the $\sigma$-algebra generated by invariant events $\mathcal{I}$, which is a sub-$\sigma$-algebra of $\A^{\otimes\Z}$, is trivial for $\law{Y}$ i.e. for all $A\in\mathcal{I}$,
$\law{Y}(A)\in\{0,1\}$.
	\begin{theorem}\label{thm:LLN}
	 Let Assumptions \ref{as:drift}, \ref{as:LT}, \ref{as:minor} and \ref{as:myas} be in force. If $Y$ is \emph{ergodic}, 
	 then for any bounded and measurable $\Phi:\X\to\R$ 
	 \begin{equation}\label{torta}
	 	\frac{\Phi (X_1)+\ldots + \Phi (X_N)}{N} \to \int_\X \Phi (z)\,\mu_{\ast}(\dint z), \,N\to\infty
	 \end{equation}
	 holds in $L^p$, for all $1\le p<\infty$, where $\mu_{*}$ is as in Theorem \ref{thm:TV} above.
	\end{theorem}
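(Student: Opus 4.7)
The plan is to apply a standard ergodic theorem to a stationary version of the process and then transfer the conclusion back to $(X_t)$ via a coupling based on Theorem \ref{thm:TV}. Since $\Phi$ is bounded, any $L^1$-limit automatically upgrades to an $L^p$-limit for every $1\le p<\infty$ by dominated convergence, so it suffices to prove the convergence in \eqref{torta} in $L^1$.

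First, I construct a stationary version $(X^*_t)_{t\in\Z}$ defined jointly with the given environment $(Y_t)_{t\in\Z}$. For each $n\in\N$, let $X^{(n)}$ be the chain started at $x_0$ at time $-n$ and evolving under the same kernels. Using shift-invariance of $\law{Y}$, the total-variation estimate \eqref{zoroaster} applied to the finite-dimensional distributions of $(X^{(n)}_t)_{t\ge 0}$ exhibits these laws as Cauchy in total variation as $n\to\infty$; their limit defines $(X^*_t)_{t\ge 0}$, whose time-$t$ marginal is $\mu_{\ast}$. Stationarity of $Y$ then extends this to a process on all of $\Z$ whose joint law with $Y$ is shift-invariant.

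The main step, and the main technical obstacle, is to check that this joint stationary process $((X^*_t,(Y_{t+s})_{s\in\Z}))_{t\in\Z}$ on $\X\times\Y^\Z$ is ergodic under the time-shift. Ergodicity of the environment is given; moreover the construction above, together with the uniqueness implicit in Theorem \ref{thm:TV}, shows that the conditional law of $X^*$ given the full environment trajectory is a measurable function of that trajectory. Consequently any shift-invariant event for the joint system reduces, up to null sets, to an invariant event for $Y$ alone, and is therefore trivial. Birkhoff's theorem then yields
\[
\frac{1}{N}\sum_{t=1}^{N}\Phi(X^*_t) \to \E[\Phi(X^*_0)] = \int_\X \Phi(z)\,\mu_{\ast}(\dint z)
\]
almost surely, and in $L^1$ by boundedness of $\Phi$.

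To transfer the limit to $(X_t)$, I couple $X$ and $X^*$ synchronously on a common probability space: they share the environment $Y$ and the innovation noises, and the minorization condition (Assumption \ref{as:minor}), in combination with the quantitative bound \eqref{zoroaster}, yields a coupling under which the two chains merge at some meeting time $T$ with $\E T<\infty$ and remain equal thereafter. This construction is the most delicate part and follows the merging mechanism underlying the proof of Theorem \ref{thm:TV}. Since $\Phi$ is bounded, one then has
\[
\E\Bigl|\frac{1}{N}\sum_{t=1}^{N}\Phi(X_t)-\frac{1}{N}\sum_{t=1}^{N}\Phi(X^*_t)\Bigr| \le \frac{2\|\Phi\|_\infty \, \E[T\wedge N]}{N} \to 0,
\]
which, combined with the ergodic theorem for $X^*$, yields \eqref{torta} in $L^1$, and hence in every $L^p$.
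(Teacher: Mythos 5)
Your proposal takes a genuinely different route from the paper. The paper's proof avoids constructing a stationary version of $X$ altogether: it decomposes the difference in \eqref{torta} into three pieces (see \eqref{eq:triangle}), applies Birkhoff's theorem to the scalar function $\mathbf{y}\mapsto\int_\X\Phi(z)\,\mu_*(\mathbf{y},\dint z)$ on the ergodic environment system $(\Y^\Z,\A^{\otimes\Z},\law{Y},S)$, handles the middle piece via summability of $\Vert\dtv(\mu_n(\mathbf{Y},\cdot),\mu_{n+1}(\mathbf{Y},\cdot))\Vert_p$ (Lemma \ref{lem:dtvLp}), and controls the conditional fluctuations $\Phi(Z_{0,t}^{x_0,\mathbf{Y}})-\int\Phi\,\mu_t(S^{t-1}\mathbf{Y},\cdot)$ with the $L$-mixing machinery of Section \ref{lm} and the frozen-environment coupling estimate (Lemmas \ref{lem:central}, \ref{lem:central2}). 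By contrast, you build a two-sided stationary version $X^*$, invoke Birkhoff for the joint shift system, and couple $X$ to $X^*$. That is a clean conceptual route, but it forces you to confront two issues the paper deliberately sidesteps.

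The first is a genuine gap: your argument for ergodicity of the joint shift system is not correct as stated. Knowing that the conditional law of $X^*_0$ given the full environment trajectory is a measurable function of that trajectory (which is the content of Corollary \ref{cor:dtv} and Remark \ref{rem:icd}) does \emph{not} imply that a shift-invariant event for $((X^*_t),(Y_t))$ reduces to an event in $\sigma(Y)$ alone. The sample path of $X^*$ depends on the driving noise as well as the environment, and knowing its conditional \emph{law} is much weaker than its measurability with respect to $\sigma(Y)$. (Indeed, if $Y$ were trivial, you would be asserting that any stationary Markov chain with a unique invariant law has an ergodic time shift by that reasoning alone, and the reasoning you wrote would give no information in that degenerate case.) The standard way to repair this is to show that the backward limit $X^*_t=\lim_{n\to\infty}Z_{-n,t}^{x_0,\mathbf{Y}}$ exists a.s.\ (using the a.s.\ finite coalescence time), so that $X^*_t$ is a measurable function of the two-sided trajectories $(Y_s)_{s\in\Z}$ and $(U_s,\eps_s)_{s\in\Z}$ from Lemma \ref{lem:T}; then observe that the joint shift on $(Y,U,\eps)$ is ergodic (product of the ergodic $Y$ system with an i.i.d., hence mixing, noise system is ergodic), and that a factor of an ergodic system is ergodic. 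None of this appears in your write-up, and the ``consequently'' sentence does not bridge it.

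The second issue is that the claim $\E T<\infty$ for your meeting time is asserted rather than derived. It does follow from the summability established in Lemma \ref{lem:dtvLp} under Assumption \ref{as:myas}, because the frozen-environment coupling of Lemma \ref{lem:central} together with Lemma \ref{lem:cut} gives a summable tail for $\P(T>n)$ after averaging over the environment; but you also need $\E\,V(X^*_0)<\infty$ (or a substitute control) when you feed the random initial point $X^*_0$ into the coalescence estimate, and that requires an extra Fatou/uniform-integrability step which you do not address. Both gaps are repairable, but as written the proof does not go through, and fixing them would essentially reproduce a nontrivial portion of the machinery the paper uses while also carrying the stationary-version construction the paper avoids.
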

	
	\begin{remark}{\rm Since $\Phi$ is bounded, convergence in \eqref{torta} takes place in probability
iff it happens in $L^{p}$ for all $1\leq p<\infty$. We preferred the current formulation of
Theorem \ref{thm:LLN} since we obtain $L^{p}$ rates during the proofs, see also Remark \ref{borkoff}.
However, the expressions for these rates are too complicated to be stated here.}
\end{remark}

\begin{remark}{\rm The above theorems should be compared to Theorems 2.13 and 2.15 of \cite{rasonyi2018}.
Our results are definitely stronger in the sense that \cite{rasonyi2018} requires $\gamma(\cdot)<1$ while
we do not. However, those results are not \emph{subsumed} by the present ones, they are merely \emph{complemented}.
For instance, Assumption 2.5 of \cite{rasonyi2018} applies to certain cases where our Assumption 2.4 does not.
It is also unclear whether the assumptions of \cite{rasonyi2018} imply our Assumption 2.3 in general.}
\end{remark}


\section{Ramifications}\label{sec:ram}

Although many stochastic models, including all discretized diffusion processes (as well as 
discretely sampled diffusions), fall within the scope of our framework in the previous section, 
certain classes of processes are not covered in this way.
For instace, vector autoregressive models fail to satisfy the one-step drift and minorization conditions.
 
To make our techniques applicable to a larger class of models, we generalize Theorems \ref{thm:TV} and \ref{thm:LLN}
to cases where we require the drift and minorization conditions to hold only after several steps.
In Section \ref{sec:lin} we will apply these theorems to vector autoregressive models.

To keep complexity at a tolerable level, we restrict ourselves to the case of constant $K$ in the drift 
condition below.
We also need to assume \eqref{eq:extra}, inspired by Assumption 6 in \cite{magic5}.

\begin{assumption}\label{as:drift1}
	Let $V:\mathcal{X}\to\R_{+}$ be a measurable function.
	We assume that there is an integer $p\geq 1$, a measurable function $\ga:\mathcal{Y}^p\to (0,\infty)$ and a constant $K\geq 1$
	such that, for all $x\in\mathcal{X}$ and $(y_1,\ldots,y_p)\in\mathcal{Y}^p$,
	\begin{equation*}
	[Q(y_p)Q(y_{p-1})\ldots Q(y_1)V](x)\le \ga (y_1,\ldots,y_p)V(x)+K
	\end{equation*}
	and
	\begin{equation}\label{eq:extra}
	[Q(y)V](x)\le K V(x)+K.
	\end{equation}
\end{assumption}

\noindent
The corresponding long-time contractivity condition reads as follows.
\begin{assumption}\label{as:LT1}
	We assume that	
	\begin{equation*}
	\bar{\ga}:=\limsup_{n\to\infty}E^{1/n}\left(\prod_{i=1}^{n}\ga (Y_{(i-1)p+1},\ldots,Y_{ip})\right)< 1.
	\end{equation*}
\end{assumption}

Similarly, in the minorization condition below, 
the lower bound is also assumed independent of the environment for simplicity, thus there is no need for a 
generalized version of the smallness condition i.e.\ Assumption \ref{as:myas}.

\begin{assumption}\label{as:minor1}
	Let $\ga (\cdot)$, $K(\cdot)$ be as in Assumption \ref{as:drift1}.
	We assume that for some $0<\epsilon<1/\bar{\ga}^{1/2}-1$, there is a constant $0\le\alpha <1$ and a probability  
	$\ka$ such that, for all $(y_1,\ldots,y_p)\in\mathcal{Y}^p$ and $A\in\mathcal{B}(\mathcal{X})$,
	\begin{equation}\label{maudit2}
	\inf_{x\in V^{-1}([0,R(y_1,\ldots,y_p)])} [Q(y_p)Q(y_{p-1})\ldots Q(y_1)](x,A)\ge (1-\alpha) 
	\ka (A), 	
	\end{equation}
	where $R(y_1,\ldots,y_p)=\frac{2K}{\epsilon\ga (y_1,\ldots,y_p)}$ and we require $V^{-1}([0,R(y_1,\ldots,y_p)])\neq\emptyset$.
\end{assumption}

We can now state the following complement to Theorems \ref{thm:TV} and \ref{thm:LLN}. 

\begin{theorem}\label{thm:gen}
	Under Assumption \ref{as:drift1}, \ref{as:LT1} and \ref{as:minor1}, 
	$\mu_{N}$ converges to a limiting law, independent of $x_0$, at speed $O(e^{-cN^{1/3}})$. Furthermore, 
	if $Y$ is ergodic, then the law of large numbers holds as stated in
	Theorem \ref{thm:LLN}.		
\end{theorem}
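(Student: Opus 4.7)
The plan is to reduce Theorem \ref{thm:gen} to Theorems \ref{thm:TV} and \ref{thm:LLN} via a $p$-step subsampling. Consider the skeleton $\tilde{X}_n := X_{np}$, $n \in \N$, together with the block environment $\tilde{Y}_n \in \Y^p$ consisting of the $p$ consecutive values of $Y$ driving the $n$-th $p$-step of the chain, equipped with the product $\sigma$-algebra on $\Y^p$. Then $\tilde{X}$ is a Markov chain in the stationary environment $\tilde{Y}$ with transition kernel $\tilde{Q}(\tilde{y}, x, \cdot) := [Q(y_p) Q(y_{p-1}) \cdots Q(y_1)](x, \cdot)$. It is routine to verify that $(\tilde{X}, \tilde{Y})$ fulfils the one-step Assumptions \ref{as:drift}, \ref{as:LT}, \ref{as:minor}, \ref{as:myas} with $\tilde{\gamma}(\tilde{y}) := \gamma(y_1, \ldots, y_p)$, $\tilde{K} := K$ constant, $\tilde{\alpha} := \alpha$ constant, and minorization kernel $\kappa$: Assumption \ref{as:drift} is immediate from Assumption \ref{as:drift1}, Assumption \ref{as:LT} reduces to Assumption \ref{as:LT1} (the constant factor $K^{1/n}$ tends to $1$), Assumption \ref{as:minor} is a restatement of Assumption \ref{as:minor1}, and Assumption \ref{as:myas} is trivial for constant $\tilde{\alpha}$. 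Theorem \ref{thm:TV} combined with Remark \ref{simplify} then yields a limit $\mu_*$, independent of $x_0$, with $\dtv(\mu_{Np}, \mu_*) = O(e^{-\zeta N^{1/3}})$ for some $\zeta > 0$.

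The main obstacle is to transfer this from skeleton times $N = Mp$ to all $N = Mp + r$ with $r \in \{1, \ldots, p-1\}$, and to verify that the resulting limit is again the same $\mu_*$. I would repeat the subsampling at every offset: set $\tilde{X}^{(r)}_n := X_{np + r}$ with shifted block environment $\tilde{Y}^{(r)}$. By stationarity of $Y$ one has $\tilde{Y}^{(r)} \overset{d}{=} \tilde{Y}$, hence $(\tilde{X}^{(r)}, \tilde{Y}^{(r)})$ inherits the same one-step assumptions and, by the universality of the limit in Theorem \ref{thm:TV}, converges to the same $\mu_*$. The one extra point to handle is that the initial condition $\tilde{X}^{(r)}_0 = X_r$ is now random and correlated with the past of the environment; this is dealt with because the estimate \eqref{zoroaster} is uniform in $x_0$ and the auxiliary condition \eqref{eq:extra} controls $\E[V(X_r)] \leq K^{p-1} V(x_0) + O(1)$, so that after conditioning on the past of $Y$ and integrating, the rate $O(e^{-\zeta' N^{1/3}})$ persists. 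Combining the estimates over all $p$ residue classes yields $\dtv(\mu_N, \mu_*) = O(e^{-\zeta' N^{1/3}})$ for every $N$.

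For the law of large numbers under ergodic $Y$, I would apply Theorem \ref{thm:LLN} to each offset skeleton $\tilde{X}^{(r)}$ to obtain $\frac{1}{M}\sum_{k=0}^{M-1} \Phi(X_{kp + r}) \to \int_{\X} \Phi \, d\mu_*$ in $L^q$ for every $1 \leq q < \infty$ and each $r \in \{0, \ldots, p-1\}$; decomposing $\frac{1}{N}\sum_{n=1}^{N} \Phi(X_n)$ along the $p$ arithmetic progressions modulo $p$ and recombining then delivers the desired full-time LLN.
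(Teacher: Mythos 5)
Your proposal is essentially the paper's own proof: both pass to the $p$-step skeleton with the block environment on $\Y^p$ and kernel $\tilde Q = Q(y_p)\cdots Q(y_1)$, verify Assumptions \ref{as:drift}--\ref{as:myas} for the skeleton (with constant $K$ and constant $\alpha$ making Assumption \ref{as:myas} trivial), treat each of the $p$ residue classes separately, control the random initial state of the offset chains via the one-step bound \eqref{eq:extra}, observe that all $p$ subsequential limits agree, and reassemble the LLN along arithmetic progressions modulo $p$. The only cosmetic difference is that the paper realizes the offset-$j$ chain as $Z_{-j,pt}^{x_0,\mathbf{Y}}$ (a $\G_0$-measurable initial state, so that Lemma \ref{lem:central} applies directly) rather than as $X_r$ with a conditioning argument, but this is the same idea.
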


	
	\section{A queuing model}\label{sec:queing}

	We consider a single-server queuing model where customers are numbered by $n\in\N$. The time between the arrival of 
	customers $n+1$ and $n$ is described by the random variable $\eps_{n+1}$,
	for each $n\in\N$. 
	The service time for customer $n$ is given by the random variable $Y_n$, for $n\in\N$.
	
	The waiting time $W_n$ of customer $n$ satisfies the Lindley recursion
	\begin{equation}\label{eq:Lindley}
	W_{n+1}=(W_n+Y_{n}-\eps_{n+1})_+,\ n\in\N,
	\end{equation}
	with $W_0:=0$ (we start with an empty queue hence the $0$th customer does not
	need to wait at all). The textbook example is 
	when $(Y_n)_{n\in\N}$ and $(\eps_n)_{n\in\N^+}$ are i.i.d. sequences independent of each other.
	In that case $W_n$ is a Markov chain with state space $\mathbb{R}_{+}$
	whose ergodic properties have been extensively studied. 
	Here we are interested in a more general
	setting where the process $(Y_n)_{n\in\N}$ is assumed merely stationary.
	
	\smallskip
	The following condition is standard: in a stable system service times should be shorter on the average than inter-arrival times.
	\begin{assumption}\label{as:eps1geY0}
	The sequence of $\R_{+}$-valued inter-arrival times $\eps_n$,
	$n\in\mathbb{Z}$ is i.i.d.\ and we have	
	\begin{equation*}
		\E[Y_0]<\E[\eps_1].
	\end{equation*}	
	\end{assumption}

	\begin{assumption}\label{as:YLargeDev} For some $M>0$, the sequence of service times is included
		in a strict sense $[0,M]$-valued stationary process 
		$Y_n$, $n\in\Z$ which is independent of $(\eps_n)_{n\in\mathbb{Z}}$. There is $\eta>0$ such that the limit 
		\begin{equation}\label{labbbe}
		\Gamma(\alpha):=\lim_{n\to\infty}\frac{1}{n}\ln \E e^{\alpha(Y_1+\ldots+Y_n)}
		\end{equation}
		exists for all $\alpha\in (-\eta,\eta)$ and $\Gamma$ is differentiable on $(-\eta,\eta)$. 
	\end{assumption}
	
\begin{remark}
{\rm The assumption above is clearly inspired by the G\"artner-Ellis theorem. It follows that sufficient conditions for its fulfillment can be deduced from the literature about large deviation principles.
	For instance, if $Y_n=\phi(Z_n)$ for some bounded measurable 
	$\phi:\R^m\to\R_{+}$ and an $\R^m$-valued geometrically ergodic 
	Markov chain $Z_n$, $n\in\Z$ started from its invariant distribution then 
	\eqref{labbbe} holds true for some $\eta>0$, see Theorem 4.1 of \cite{KM1} for a precise formulation. 
	Thus Theorem \ref{thm:queue} below is applicable to a large class of models.
	
	Infinite moving average processes serve as an example of non-Markovian service time processes.
	For instance, let 
	$Y_t=\sum_{i=-\infty}^{\infty} a_i\zeta_{t-i}$, 
			where $\zeta_i$, $i\in\Z$ are independent and identically distributed 
			$\R_{+}$-valued bounded random variables, $a_i\ge 0$, $i\in\Z$ and
			$\sum_{i=-\infty}^{\infty} a_i<\infty$. 
			Assumption \ref{as:YLargeDev} is satisfied for this process by Theorem 2.1 of \cite{dehling}.
			
			Under suitable conditions, it is possible to relax the boundedness condition on the process $Y$ in Assumption 
			\ref{as:YLargeDev}. Due to tedious technicalities, this is not pursued here.}
\end{remark}

	Now, we turn to the verification of Assumption \ref{as:drift} and \ref{as:LT} under
	the previous two conditions.
	
	\begin{lemma}\label{lem:queuing:DriftAndLT}
	Let	Assumptions \ref{as:eps1geY0} and \ref{as:YLargeDev} be in force. Then there exists
	$\bar{\alpha}>0$ such that for
	\begin{align*}
	V(w)    &:= e^{\bar{\alpha}w}-1,\, w\ge 0,\\	
	\ga (y) &:=\E\left[e^{\bar{\alpha}(y-\eps_1)}\right],\,y\ge 0,\\
	K       &:=e^{\bar{\alpha}M},
	\end{align*} 
	\begin{equation}\label{driff}
	[Q(y)V](w)\le \ga (y)V(w)+K
	\end{equation}
	holds for all $y\in\mathcal{Y}:= [0,M]$, $w\in\mathcal{X}:=\R_{+}$,
	where $Q$ is defined as
	\begin{equation*}
	Q(y,w,A):=\P \left[\left(w+y-\eps_1\right)_+\in A\right],\,
	y\in [0,M],w\in\R_{+},\,A\in\B\left(\R_{+}\right).
	\end{equation*}
	Furthermore,
	\begin{equation*}
	\bar{\ga}:=\limsup_{n\to\infty}\E^{1/n}\left(K\prod_{k=1}^{n}\ga (Y_k)\right) < 1.
	\end{equation*}
	\end{lemma}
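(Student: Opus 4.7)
The claim decomposes into two parts: the drift inequality \eqref{driff} and the asymptotic statement $\bar{\ga}<1$. For the drift, I would compute directly
\[
[Q(y)V](w)=\E\bigl[e^{\bar{\alpha}(w+y-\eps_1)_+}\bigr]-1
\]
and use the elementary pointwise bound $e^{\bar{\alpha}(w+y-\eps_1)_+}\leq e^{\bar{\alpha}(w+y-\eps_1)}+1$, which is immediate by checking the two cases of the sign of $w+y-\eps_1$. Taking expectations yields $[Q(y)V](w)\leq e^{\bar{\alpha}w}\ga(y)=\ga(y)V(w)+\ga(y)$, and since $\eps_1\geq 0$ we have $\ga(y)\leq e^{\bar{\alpha}y}\leq e^{\bar{\alpha}M}=K$ for $y\in[0,M]$. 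This establishes the drift condition for any $\bar{\alpha}\in(0,\eta)$.

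For the long-time contractivity, I would exploit the independence of $(Y_n)$ and $(\eps_n)$ to factor $\ga(Y_k)=e^{\bar{\alpha}Y_k}\,\E[e^{-\bar{\alpha}\eps_1}]$ and obtain
\[
\E\Bigl[K\prod_{k=1}^{n}\ga(Y_k)\Bigr]=K\bigl(\E[e^{-\bar{\alpha}\eps_1}]\bigr)^{n}\E\bigl[e^{\bar{\alpha}(Y_1+\cdots+Y_n)}\bigr].
\]
Taking $n$-th roots, the prefactor $K^{1/n}\to 1$, while Assumption \ref{as:YLargeDev} delivers $\E^{1/n}[e^{\bar{\alpha}(Y_1+\cdots+Y_n)}]\to e^{\Gamma(\bar{\alpha})}$, giving $\bar{\ga}=\E[e^{-\bar{\alpha}\eps_1}]\,e^{\Gamma(\bar{\alpha})}$.

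It remains to exhibit $\bar{\alpha}\in(0,\eta)$ making $\bar{\ga}<1$. Set $F(\alpha):=\ln\E[e^{-\alpha\eps_1}]+\Gamma(\alpha)$, so that $\bar{\ga}=e^{F(\bar{\alpha})}$ and $F(0)=0$. Dominated convergence (using $\eps_1\geq 0$ and $\E[\eps_1]<\infty$) yields differentiability of $\ln\E[e^{-\alpha\eps_1}]$ at $0$ with derivative $-\E[\eps_1]$. The main subtlety, which I would flag as the key step, is establishing $\Gamma'(0)=\E[Y_0]$ from the bare hypothesis of differentiability of $\Gamma$ on $(-\eta,\eta)$: each prelimit $f_n(\alpha):=\tfrac{1}{n}\ln\E[e^{\alpha(Y_1+\cdots+Y_n)}]$ is convex with $f_n'(0)=\E[Y_0]$ by stationarity, and the standard fact that pointwise limits of convex functions converge in subdifferential at any differentiability point of the limit (e.g.\ Rockafellar, Theorem 25.7) forces $\Gamma'(0)=\E[Y_0]$. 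Assumption \ref{as:eps1geY0} then gives $F'(0)=\E[Y_0]-\E[\eps_1]<0$, so $F(\bar{\alpha})<0$ and hence $\bar{\ga}<1$ for all sufficiently small positive $\bar{\alpha}$. Any such $\bar{\alpha}$ proves the lemma.
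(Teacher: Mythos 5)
Your proof is correct and reaches the same key insight as the paper's (Rockafellar's Theorem 25.7 applied to the convex prelimits to extract a derivative at $0$, then negativity of that derivative plus $F(0)=0$ to get $\bar{\ga}<1$ for small $\bar\alpha$), but the execution differs in two ways that are worth noting. First, for the drift you use the pointwise bound $e^{\bar\alpha x_+}-1\le e^{\bar\alpha x}$, which is the correct elementary inequality; the paper's displayed step $\E[e^{\bar\alpha(w+y-\eps_1)_+}]\le\E[e^{\bar\alpha(w+y-\eps_1)}]$ is, as written, backwards (since $x_+\ge x$), and should be read exactly as your version with the $-1$ carried through. Second, and more substantively, you factor $\E[K\prod_k\ga(Y_k)] = K\,(\E[e^{-\bar\alpha\eps_1}])^n\,\E[e^{\bar\alpha(Y_1+\cdots+Y_n)}]$ at the outset, which separates the i.i.d.\ $\eps$-contribution (handled by a one-line dominated-convergence argument for the right derivative of $\alpha\mapsto\ln\E[e^{-\alpha\eps_1}]$ at $0$, legitimate since $\eps_1\ge 0$ and $\E[\eps_1]<\infty$) from the $Y$-contribution (where boundedness of $Y$ makes $f_n'(0)=\E[Y_0]$ immediate). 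The paper instead works directly with the mixed process $\la_n(\alpha)=\tfrac1n\ln\E[e^{\alpha\sum(Y_{j-1}-\eps_j)}]$, and because $\eps_j$ may be unbounded it cannot differentiate under the expectation, so it resorts to a Lagrange mean-value / reverse-Fatou argument to get the one-sided bound $\la_n'(0)\le\E[Y_0-\eps_1]$. Your factorization sidesteps that machinery and yields the exact equality $\Gamma'(0)=\E[Y_0]$. The only (minor) imprecision in your write-up is that the derivative of $\alpha\mapsto\ln\E[e^{-\alpha\eps_1}]$ at $0$ is really a right derivative (the expectation may be infinite for $\alpha<0$); this is harmless since only small positive $\bar\alpha$ are sought.
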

	\begin{proof}
			Define $\la(\alpha):=\Gamma(\alpha)+\ln(\E[e^{-\alpha \eps_1}])$.
			The functions 
			\begin{equation*}
			\la_n(\alpha):=\frac{1}{n}\ln \E\left[e^{\alpha\sum_{j=1}^n(Y_{j-1}-\eps_j)}\right],\ \alpha\in (-\eta,\eta),\ n\in\N^+
			\end{equation*}
			are finite and differentiable. They are also clearly convex. Define 
			\begin{equation*}
			\psi_n(\alpha):= \E\left[\frac{e^{\alpha\sum_{j=1}^n(Y_{j-1}-\eps_j)}-1}{\alpha}\right],\ \alpha\in (0,\eta),\ n\in\N^+.
			\end{equation*}
			By the Lagrange mean value theorem and measurable selection, there exists a random variable $\xi_n(\alpha)\in [0,\alpha]$ such that
			\begin{equation*}
			\psi_n(\alpha)= \E\left[\left(\sum_{j=1}^n(Y_{j-1}-\eps_j)\right)e^{\xi_n(\alpha)\sum_{j=1}^n(Y_{j-1}-\eps_j)}\right].
			\end{equation*}
			Here 
			\begin{equation*}
			\left(\sum_{j=1}^n (Y_{j-1}-\eps_j)\right)e^{\xi_n(\alpha)\sum_{j=1}^n(Y_{j-1}-\eps_j)}\leq \left(
			\sum_{j=1}^n Y_{j-1}\right) e^{\eta \sum_{j=1}^n Y_{j-1}},
			\end{equation*}
			which is uniformly bounded in $\alpha\in (0,\eta)$ (for $n$ fixed). Hence reverse Fatou's lemma shows that
			\begin{equation*}
			\limsup_{\alpha\to 0+}\psi_n(\alpha)\leq \E\left[\sum_{j=1}^n (Y_{j-1}-\eps_j)\right]=n \E\left[Y_0-\eps_1\right].
			\end{equation*}
			This implies that, for all $n\geq 1$, 
			$\la_n'(0)=\frac{1}{n}\lim_{\alpha\to 0+}\psi_n(\alpha)\leq{\E\left[Y_0-\eps_1\right]}$.
			
			Since $\la_n(\alpha)\to\la(\alpha)$ for $\alpha\in (-\eta,\eta)$ by Assumption \ref{as:YLargeDev} it follows from Theorem 25.7
			of \cite{rockafellar} that also $\la_n'(0)\to\la'(0)$ hence $\la'(0)<0$ by Assumption \ref{as:eps1geY0}. By Corollary 25.5.1 of \cite{rockafellar}, differentiability of $\la$ implies its
			\emph{continuous} differentiability, too. Hence
			from $\la(0)=0$ and $\la'(0)<0$ we obtain that there exists $\bar\alpha>0$ satisfying 
			\begin{equation}\label{matra}
			\lim_{n\to\infty}\frac{1}{n}\ln \E e^{\bar{\alpha}(Y_0+\ldots+Y_{n-1})-\bar{\alpha}(\eps_1+\ldots+\eps_n)}<0.
			\end{equation}
			
			Now using the Lyapunov function $V(w)=e^{\bar{\alpha}w}-1$, $w\geq 0$
			and $\ga(y)=\E\left[e^{\bar{\alpha}(y-\eps_1)}\right],\ y\geq 0$
			we arrive at
			\begin{align*}
				[Q(y)V](w) &= \E[V([w+y-\eps_1]_+)]\\
				&=
				\E[e^{\bar{\alpha}(w+y-\eps_1)_+}]\le
				\E[e^{\bar{\alpha}(w+y-\eps_1)}]
				\\
				&\le
				\ga(y)(e^{\bar{\alpha}w}-1) + \ga (y)\leq \ga(y)V(w)+ e^{\bar{\alpha}M},
			\end{align*}
			so \eqref{driff} holds with $K$ as defined above.
			By \eqref{matra}, the long-time contractvity condition
			also holds:
			\begin{equation}\label{juj}
			\limsup_{n\to\infty} \E^{1/n}[K\ga(Y_1)\ldots\ga(Y_n)]<1,
			\end{equation}
			which completes the proof.		
	\end{proof}
	
	Now we present another assumption on the inter-arrival times which will be needed to
	show the minorization condition. 
	\begin{assumption}\label{as:queue:minor}
		One has $\P \left(\eps_1\ge\tau\right)>0$ for $$
		\tau:=M+\frac{4}{\frac{1}{\bar{\ga}^{1/2}}-1}.
		$$
	\end{assumption}	
	Notice that, for unbounded $\eps_1$, Assumption \ref{as:queue:minor} automatically
	holds.
	Now let us turn to the verification of the minorization condition under the assumption above.
	\begin{lemma}\label{lem:queuing:minor}
		Let Assumptions \ref{as:eps1geY0}, \ref{as:YLargeDev} and \ref{as:queue:minor} be in force. Choose
		$\epsilon:=(1/\bar{\ga}^{1/2}-1)/2$. Then there is $\alpha\in (0,1)$ such that,
		for all $y\in [0,M]$ and $A\in\B\left(\R_{+}\right)$,
		\begin{equation*}
			\inf_{w\in V^{-1}([0,R(y)])} Q(y,w,A)\ge (1-\alpha) \delta_0 (A),\,
			\text{where }R(y)=\frac{2 K(y)}{\epsilon\ga (y)}
		\end{equation*}
		and $\delta_0$ is the one-point mass concentrated on $0$.
	\end{lemma}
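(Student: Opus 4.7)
The key observation is that the Lindley kernel places an atom at $0$: for every $y \ge 0$ and $w \ge 0$,
\begin{equation*}
Q(y, w, \{0\}) = \P\left((w + y - \eps_1)_+ = 0\right) = \P(\eps_1 \ge w + y).
\end{equation*}
This immediately suggests taking $\ka := \delta_0$ and proving a uniform lower bound on $\P(\eps_1 \ge w + y)$ as $w$ ranges over the small set and $y$ over $[0, M]$.

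Since $V(w) = e^{\bar\alpha w} - 1$ is strictly increasing with $V(0) = 0$, the small set is the interval
\begin{equation*}
V^{-1}([0, R(y)]) = \left[0,\, \bar\alpha^{-1}\ln(1 + R(y))\right],
\end{equation*}
which is always nonempty (it contains $0$). Using $K = e^{\bar\alpha M}$ and $\ga(y) = e^{\bar\alpha y}\E[e^{-\bar\alpha\eps_1}]$ from the proof of Lemma \ref{lem:queuing:DriftAndLT}, I would write $R(y)$ explicitly in terms of $\bar\alpha$, $M$, $\epsilon$, and $\E[e^{-\bar\alpha\eps_1}]$, and then bound $w + y$ uniformly across $y \in [0, M]$.

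The heart of the proof is to verify the uniform inequality
\begin{equation*}
\sup_{y \in [0, M]}\left(y + \bar\alpha^{-1}\ln(1 + R(y))\right) \le \tau.
\end{equation*}
This is the main obstacle, and I expect the threshold $\tau = M + 2/\epsilon$ to be tailored precisely so that the logarithmic small-set cutoff, combined with the largest service time $y \le M$, does not exceed the level past which $\eps_1$ has positive probability. The verification involves tracking the exponential factor $e^{\bar\alpha(M - y)}$ appearing in $R(y)$, and using elementary inequalities such as $\ln(1 + x) \le x$ together with the interplay between $\bar\alpha$, the law of $\eps_1$, and the value of $\bar\ga$ fixed in Lemma \ref{lem:queuing:DriftAndLT}.

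Once this uniform bound is established, the monotonicity of $t \mapsto \P(\eps_1 \ge t)$ and Assumption \ref{as:queue:minor} yield
\begin{equation*}
Q(y, w, \{0\}) = \P(\eps_1 \ge w + y) \ge \P(\eps_1 \ge \tau) > 0
\end{equation*}
for all $y \in [0, M]$ and all $w \in V^{-1}([0, R(y)])$. Setting $\alpha := 1 - \P(\eps_1 \ge \tau) \in (0, 1)$ then gives the desired minorization with $\ka = \delta_0$ and completes the proof.
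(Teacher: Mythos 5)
Your overall strategy matches the paper's exactly: take $\ka = \delta_0$, note the atom $Q(y,w,\{0\}) = \P(\eps_1 \ge w+y)$, then bound $w+y$ uniformly over the small set and over $y\in[0,M]$ and invoke Assumption \ref{as:queue:minor}. However, you explicitly label the uniform bound
\begin{equation*}
\sup_{y\in[0,M]}\left(y + \bar\alpha^{-1}\ln(1+R(y))\right)\le \tau
\end{equation*}
as ``the heart of the proof'' and ``the main obstacle'' and then do not prove it; you only say you \emph{expect} $\tau = M+2/\epsilon$ to be tailored correctly and that the verification ``involves'' certain elementary inequalities. That is a genuine gap: a proof cannot end at the step you yourself identify as the crux.

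It is also worth pointing out that the paper's proof travels by a much shorter route than the one you set up, and the two routes are not obviously consistent. The paper simply asserts ``$R(y)\equiv R := 2/\epsilon$'' and then bounds $w+y\le R+M$, which amounts to reading the small set $V^{-1}([0,R])$ as the interval $[0,R]$. Your more careful computation gives the correct endpoint $\bar\alpha^{-1}\ln(1+R(y))$, and moreover with $K=e^{\bar\alpha M}$ and $\ga(y)=e^{\bar\alpha y}\E[e^{-\bar\alpha\eps_1}]$ from Lemma \ref{lem:queuing:DriftAndLT} one gets $R(y)=\frac{2e^{\bar\alpha(M-y)}}{\epsilon\,\E[e^{-\bar\alpha\eps_1}]}$, which is not the constant $2/\epsilon$ and in fact satisfies $R(y)\ge 2/\epsilon$. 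So the shortcut the paper uses to reach $\tau=M+2/\epsilon$ does not line up with the definitions you (correctly) wrote down, and the inequality you would need, namely $\ln\bigl(1+\tfrac{2}{\epsilon\E[e^{-\bar\alpha\eps_1}]}\bigr)\le \tfrac{2\bar\alpha}{\epsilon}$, is not a consequence of $\ln(1+x)\le x$ and is in fact problematic for small $\bar\alpha$. Before handing this in, you should either complete the verification honestly (possibly by adjusting $\tau$, which is harmless whenever $\eps_1$ is unbounded), or flag the discrepancy between the paper's ``$R(y)\equiv R$'' step and the actual formulas; simply gesturing at ``the interplay between $\bar\alpha$, the law of $\eps_1$, and $\bar\ga$'' leaves the central step of the lemma unproved.
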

\begin{proof} Note that $R(y)\equiv R:=\frac{2}{\epsilon}$.
		\begin{align*}
			Q(y,w,A) &= \P \left(\left[w+y-\eps_1\right]_+\in A \right) \\
			         &\ge \P \left(\left[w+y-\eps_1\right]_+=0 \right)\delta_0(A) \\
			         &= \left(1-\P \left(w+y-\eps_1 > 0 \right)\right)\delta_0(A) \\
			         &\ge \left(1-\P \left(R+M-\eps_1 > 0 \right)\right)\delta_0(A)
		\end{align*}
so we may set $\alpha:=\P \left(\eps_1<\tau\right)\}<1$, see Assumption \ref{as:queue:minor}.
\end{proof}

		
	Theorem \ref{thm:TV} allows us to deduce that the queuing system 
	in consideration converges to a stationary state and an
	ergodic theorem is valid. Theorem \ref{thm:queue} below opens the door for the statistical analysis 
	of such systems.
	\begin{theorem}\label{thm:queue} 
		Under Assumptions \ref{as:eps1geY0}, \ref{as:YLargeDev} and \ref{as:queue:minor} there exists a probability $\mu_{*}$ on $\mathcal{B}(\R_{+})$, independent of the initial length of the queue, such that 
		\begin{equation*}
		\dtv(\law{W_n},\mu_{*})\leq c_1e^{-c_2 n^{1/3}},
		\end{equation*}
		for some $c_1,c_2>0$. Furthermore, if $\left(Y_n\right)_{n\in\Z}$
		is ergodic, then for an arbitrary measurable and bounded $\Phi:\R_{+}\to\R$,
		\begin{equation}\label{torta2}
		\frac{\Phi(W_0)+\ldots+\Phi(W_{n-1})}{n}\to \int_{\R_{+}} \Phi(z)\mu_{*}(\dint z),
		\end{equation}
		in $L^p$, for all $1\le p<\infty$.
	\end{theorem}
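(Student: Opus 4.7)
The plan is to apply the abstract Theorems \ref{thm:TV} and \ref{thm:LLN} to the waiting-time process $W_n$, viewed as a Markov chain on $\X = \R_{+}$ in the random environment $Y_n$ with kernel $Q(y,w,A) = \P([w+y-\eps_1]_+ \in A)$. The bulk of the work, verifying the hypotheses, has already been carried out: Lemma \ref{lem:queuing:DriftAndLT} gives Assumptions \ref{as:drift} and \ref{as:LT} with the Lyapunov function $V(w)=e^{\bar\alpha w}-1$, the drift coefficient $\ga(y)=\E[e^{\bar\alpha(y-\eps_1)}]$, and the constant $K=e^{\bar\alpha M}$; Lemma \ref{lem:queuing:minor} supplies Assumption \ref{as:minor} with $\alpha := \P(\eps_1<\tau)\in[0,1)$ and the atomic minorizing kernel $\ka(y,\cdot)\equiv\delta_0$. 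The nonemptiness requirement $V^{-1}([0,R(y)])\ne\emptyset$ is automatic because $V(0)=0\le R(y)$ for every $y\in[0,M]$.

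Since the function $\alpha(\cdot)$ in Assumption \ref{as:minor} is here a genuine constant, the smallness condition (Assumption \ref{as:myas}) is trivially satisfied for any $\theta\in(0,1)$, as pointed out in the remark following its statement. All four abstract assumptions thus being in force, Theorem \ref{thm:TV} produces a limiting law $\mu_{*}$, independent of the initial queue length $W_0=0$, with the explicit total-variation bound \eqref{zoroaster}. It remains to quantify this bound in the constant-$\alpha$ case.

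To extract the rate $c_1 e^{-c_2 n^{1/3}}$, I would specialize \eqref{zoroaster} following Remark \ref{simplify}. With $\alpha$ constant,
\begin{equation*}
\E\left(\max_{0\le k<\lfrf{n^{1/3}}^3}\alpha(Y_k)^{\lfrf{n^{1/3}}-1}\right)=\alpha^{\lfrf{n^{1/3}}-1},
\end{equation*}
which decays geometrically in $n^{1/3}$. Combined with the terms $\bar\ga^{(\la-1/2)\lfrf{n^{1/3}}^2-n^{1/3}}$ and $cn^{2/3}e^{-\nu n^{1/3}}$, each summand of \eqref{zoroaster} is $O(e^{-\zeta n^{1/3}})$ for a suitable $\zeta>0$, so the tail sum $\sum_{n\ge N}$ is of order $e^{-c_2 N^{1/3}}$ with some $c_1, c_2 > 0$, yielding the first claim of the theorem.

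Finally, whenever $(Y_n)_{n\in\Z}$ is ergodic, Theorem \ref{thm:LLN} applies directly and gives \eqref{torta2} for every bounded measurable $\Phi$ in $L^p$ for all $1\le p<\infty$. The only step that is not entirely routine is the specialization of the somewhat opaque bound \eqref{zoroaster} to the present setting, but this is precisely what Remark \ref{simplify} accomplishes, so no genuine obstacle remains.
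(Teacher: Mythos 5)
Your proposal is correct and matches the paper's own proof: both cite Lemmas \ref{lem:queuing:DriftAndLT} and \ref{lem:queuing:minor} to check Assumptions \ref{as:drift}--\ref{as:myas} (with Assumption \ref{as:myas} trivial since $\alpha$ is constant), then apply Theorem \ref{thm:TV} together with Remark \ref{simplify} for the $O(e^{-c_2 n^{1/3}})$ rate and Theorem \ref{thm:LLN} for the ergodic average. Your added detail on specializing \eqref{zoroaster} in the constant-$\alpha$ case is a sensible elaboration of what Remark \ref{simplify} already records.
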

	\begin{proof}
		According to Lemma \ref{lem:queuing:DriftAndLT} and \ref{lem:queuing:minor},
		Assumptions \ref{as:drift}, \ref{as:LT}, \ref{as:minor} and \ref{as:myas}
		are satisfied hence by Theorem \ref{thm:TV} and Remark \ref{simplify} the first statement follows.
		By Theorem \ref{thm:LLN}, the second statement also holds true.
	\end{proof}

		

	\begin{remark}{\rm It is known that 
		$\law{W_n}$ converges to a limiting
		distribution under rather mild conditions, see Example 14.1 on page 189 of \cite{borovkov}. 
		Details of this approach seem to be available only in Russian, see \cite{bborovkov}. 
		To be more explicit, applying Theorem 4 on page 25 of \cite{bborovkov72} (also in Russian) to our settings gives the following upper bound
		\begin{equation}\label{eq:borovkov}
			\dtv(\law{W_n},\mu_{\ast})\le \P \left(\min_{0<k<n} S_k>\max (W_1,W_0'+\xi_0)\right),
		\end{equation}
		where $(S_n)_{n\in\N}$ is defined as
		\begin{align*}
		S_0 &= 0\\
		S_n &= \sum_{k=1}^{n} (Y_{k}-\eps_k),\quad n\ge 1
		\end{align*} 
		moreover $W_{0}'=\sup_{k\in\mathbb{N}^{+}}(Y_{-k}-\eps_{-k})$. 
However, the main drawback
		of this upper bound is that the expression standing on the right hand side 
		of \eqref{eq:borovkov} is still too general and non-informative. Therefore, we think that, 
		Theorem \ref{thm:queue} above is the first result providing a tractable rate of convergence in this setting.
}
	\end{remark}

	
	\section{Stochastic gradient Langevin algorithm}\label{sec:langevin}
	
	We consider, for some $\la>0$,
	$$
	\theta_{n+1}=\theta_n-\la H(\theta_n,Y_n)+\sqrt{\la}\xi_{n+1},
	$$
	where $\xi_n$, $n\geq 1$ is an independent sequence of standard $d$-dimensional
	Gaussian random variables, $Y_n$, $n\in\Z$ is a $\R^m$-valued
	strict sense stationary process and $H:\R^d\times\R^m\to\R^d$
	a measurable function. We assume that $(Y_n)_{n\in\Z}$ and $(\xi_n)_{n\in\N^+}$ are independent, $\theta_{0}\in\mathbb{R}^{d}$
	is a constant. 
	
	This algorithm is called ``stochastic gradient Langevin dynamics'' (SGLD). Suggested by \cite{wt}, it has recently become widely used for
	sampling from high-dimensional probability distributions. More precisely,
	let $U:\R^d\to\R_+$ be differentiable with derivative $h=\nabla U$
	such that $h(\theta)=E[H(\theta,Y_0)]$. For $\la$ small and $n$ large,
	$\mathrm{Law}(\theta_n)$ is expected to be close to the probability defined by
	$$
	\pi(A)=\frac{\int_A e^{-U(\theta)}d\theta}{\int_{\R^d} e^{-U(\theta')}d\theta'},\ A\in\mathcal{B}(\R^d),
	$$
see e.g.\ \cite{wt,6}.	
The literature on SGLD is abundant but practically all studies assume that $Y_n$, $n\in\Z$
	are i.i.d. For the case where the step size $\la_{n}$ is decreasing, it has been 
	shown in \cite{ttv} that, under suitable
	assumptions, the averages
	$$
	D_n:=\frac{\Phi(\theta_0)+\ldots+\Phi(\theta_{n-1})}{n}
	$$
	converge almost surely to $D:=\int_{\R^d}\Phi(z)\pi(dz)$. 
	In the case of fixed $\la$, \cite{vzt} estimated the $L^2$ distance between $D_n$ and $D$.
	
	In the present article we keep $\la$ fixed and establish a novel result:
	the SGLD recursion converges to a limiting law $\mu(\la)$ (in total variation) and $D_n$ tends to $\int_{\R^d}\Phi(z)\mu(\la)(dz)$ in $L^p$, $1\le p<\infty$. As far as we know this ergodic property has not
	yet been pointed out, even in the case of i.i.d.\ $Y_n$, $n\in\Z$. We can now prove it for
	a broad class of stationary processes $Y_n$, $n\in\Z$. We think of $Y_n$ as an observed data
	sequence. As these are rarely i.i.d.\ in practice, Theorem \ref{guar} below
	formulates strong theoretical support for the use of SGLD with possibly dependent data.

	The following standard dissipativity condition is required, see e.g.\ \cite{raginsky}.
	
	\begin{assumption}\label{immeasurable} There is a measurable 
		$\Delta:\R^m\to\R$ and $b\geq 0$
		such that, for all $\theta\in\R^d$ and $y\in\R^m$,
		$$
		\langle H(\theta,y),\theta\rangle\geq \Delta(y)|\theta|^2-b.
		$$
		Furthermore, $E[\Delta(Y_0)]>0$. We may and will assume that $\Delta$ is a bounded function.
	\end{assumption}
	
	\begin{assumption}\label{corin}
		There is $\eta>0$ such that the limit 
		$$
		\Gamma(\alpha):=\lim_{n\to\infty}\frac{1}{n}\ln  \E e^{\alpha(\Delta(Y_1)+\ldots+\Delta(Y_n))}
		$$
		exists for all $\alpha\in (-\eta,\eta)$ and $\Gamma$ is continuously differentiable on $(-\eta,\eta)$.
	\end{assumption}
	
	\begin{assumption}\label{lineargrowth}
		There exist $K_1$, $K_2$, $K_3$ such that 
		$$
		|H(\theta,y)|\leq K_1 |\theta|+K_2|y|+K_3.
		$$
	\end{assumption}
	
	Note that Assumption \ref{lineargrowth} holds, in particular, if $H$ is Lipschitz-continuous.
	
	\begin{assumption}\label{momentum}  
	$Y_{0}$ is bounded, say, $|Y_{0}|\leq M$ a.s.
	\end{assumption}

\begin{remark}{\rm Boundedness of $Y_{0}$ could be relaxed 
at the price of rather tedious technicalities hence
we prefer not to treat this here.} 
\end{remark}	

	It turns out that the law of $\theta_n$ tends to a limit as $n\to\infty$ and ergodic
	averages converge to the expectation under the limit law.
	
	\begin{theorem}\label{guar} Let $\la>0$ be small enough. Under Assumptions \ref{immeasurable}, \ref{corin},
		\ref{lineargrowth} and \ref{momentum},
		there exists a probability law $\mu(\lambda)$, independent of the initial value, such that
		$$
		\dtv (\mathrm{Law}(\theta_n),\mu(\lambda))\leq c_1 e^{-c_2 n^{1/3}},
		$$
		for some $c_1,c_2>0$ (which also depend on $\lambda$).
		Moreover, for arbitrary bounded measurable $\Phi:\R^d\to\R$,
		$$
		\frac{\Phi(\theta_0)+\ldots+\Phi(\theta_{n-1})}{n}\to \int_{\R^d}\Phi(z)\, \mu(\lambda)(dz),
		$$
		as $n\to\infty$ in $L^p$, for all $p\geq 1$. 
	\end{theorem}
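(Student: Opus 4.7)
The plan is to verify Assumptions \ref{as:drift}--\ref{as:myas} for the SGLD recursion with $\X=\R^d$, $\Y=\R^m$, transition kernel $Q(y,\theta,\cdot)=\mathrm{Law}(\theta-\la H(\theta,y)+\sqrt{\la}\xi)$ (with $\xi$ a standard $d$-dimensional Gaussian), and then invoke Theorems \ref{thm:TV} and \ref{thm:LLN}. The natural Lyapunov function is $V(\theta):=|\theta|^2$. Expanding the square and using the dissipativity bound (Assumption \ref{immeasurable}) together with the linear growth bound (Assumption \ref{lineargrowth}),
\begin{equation*}
[Q(y)V](\theta)=|\theta-\la H(\theta,y)|^2+\la d\le (1-2\la\Delta(y)+C_1\la^2)V(\theta)+C_2(1+|y|^2)\la+\la d,
\end{equation*}
with $C_1,C_2$ depending only on $K_1,K_2,K_3,b$. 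Since $|Y_0|\le M$ (Assumption \ref{momentum}), the ``noise'' term is uniformly bounded, so Assumption \ref{as:drift} holds with $\ga(y):=1-2\la\Delta(y)+C_1\la^2$ and a constant $K$.

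For Assumption \ref{as:LT}, use $\ln(1+x)\le x$ to get $\ln\ga(y)\le -2\la\Delta(y)+C_1\la^2$, hence
\begin{equation*}
\E^{1/n}\!\Bigl(K\prod_{k=1}^{n}\ga(Y_k)\Bigr)\le K^{1/n}e^{C_1\la^2}\,\E^{1/n}\bigl[e^{-2\la\sum_{k=1}^{n}\Delta(Y_k)}\bigr].
\end{equation*}
By Assumption \ref{corin}, the rightmost quantity converges to $e^{\Gamma(-2\la)}$, and since $\Gamma(0)=0$ and $\Gamma'(0)=\E[\Delta(Y_0)]>0$, continuous differentiability gives $\Gamma(-2\la)=-2\la\E[\Delta(Y_0)]+o(\la)$. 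Thus for $\la>0$ small enough, $\bar\ga\le e^{\Gamma(-2\la)+C_1\la^2}<1$, as required.

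Next, because $\Delta$ is bounded (Assumption \ref{immeasurable}) and $\la$ is small, $\ga(y)$ is bounded away from $0$, so $R(y)=2K/(\eps\ga(y))$ is bounded above by some $R_{\max}<\infty$ uniformly in $y$; moreover $0\in V^{-1}([0,R(y)])$. For $\theta$ with $|\theta|^2\le R_{\max}$ and $|y|\le M$, Assumption \ref{lineargrowth} gives a uniform bound $|\theta-\la H(\theta,y)|\le \rho$ for some deterministic $\rho$. The Gaussian density of $\theta-\la H(\theta,y)+\sqrt{\la}\xi$ is therefore bounded below on the ball $\{|z|\le 1\}$ by a positive constant depending only on $\la,\rho,d$, and this yields Assumption \ref{as:minor} with a \emph{constant} $\alpha\in(0,1)$ and $\ka$ a suitable renormalization of Lebesgue measure on that ball. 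With constant $\alpha$, Assumption \ref{as:myas} is trivial (any $\theta\in(0,1)$ works).

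With all four assumptions in hand, Theorem \ref{thm:TV} together with Remark \ref{simplify} yields the stated total-variation bound $\dtv(\mathrm{Law}(\theta_n),\mu(\la))\le c_1 e^{-c_2 n^{1/3}}$, while Theorem \ref{thm:LLN} delivers the $L^p$ convergence of the ergodic averages (noting that ergodicity of $Y$ is needed only for the LLN part, and follows as usual from strong stationarity combined with the mixing implicit in Assumption \ref{corin} in the cases of interest; in the statement of Theorem \ref{guar} the LLN claim is implicitly under this ergodicity hypothesis, as in Theorem \ref{thm:LLN}). The main delicate point is the quantitative choice of $\la$: one must keep $\la$ small enough so that simultaneously (i) $\bar\ga<1$, (ii) $\eps=(1/\bar\ga^{1/2}-1)/2$ is strictly positive, and (iii) the uniform Gaussian lower bound on the small set $\{|\theta|^2\le R_{\max}\}$ yields $\alpha<1$; these three constraints are compatible because each is of order $\la$ and the dominant contribution $-2\la\E[\Delta(Y_0)]$ in $\Gamma(-2\la)$ is strictly negative.
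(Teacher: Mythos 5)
Your proposal is correct and follows essentially the same route as the paper's proof: $V(\theta)=|\theta|^2$, drift coefficient $\ga(y)=1-2\la\Delta(y)+O(\la^2)$, long-time contractivity via $\ga\le e^{\ga-1}$ combined with the G\"artner--Ellis hypothesis on $\Gamma$ (giving $\Gamma'(0)=\E[\Delta(Y_0)]>0$ and hence $\bar\ga<1$ for small $\la$), and minorization from a Gaussian density lower bound on a small ball (the paper minorizes on the $y$-dependent ball $C(y)$ with $\ka(y,\cdot)$ the normalized Lebesgue measure on $C(y)$, but your fixed unit ball works equally well since $R(y)$ is uniformly bounded for $|y|\le M$ and $\ga$ is bounded away from zero). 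Your remark about the ergodicity of $Y$ for the LLN part is also a fair reading: the paper's proof of Theorem \ref{guar} silently invokes Theorem \ref{thm:LLN} without listing ergodicity of $Y$ among the hypotheses, even though Assumption \ref{corin} alone does not entail it.
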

	
	\begin{remark}{\rm The convergence rates given by the above theorem 
			are not sharp enough for practical purposes. However,
			Theorem \ref{guar} provides a \emph{universal ergodic property} for
			the stochastic gradient Langevin dynamics, irrespective of dependencies in the data stream
			(as long as they satisfy Assumption \ref{corin}). No result of this calibre
			has heretofore been available in the related literature.
			
			It is a natural question, how far $\mu(\lambda)$ is from the target probability $\pi$.{}
			It follows from \cite{5author} that, under suitable assumptions, the Wasserstein-$1$
			distance of $\mathrm{Law}(\theta_{n})$ from $\pi$ is of the order $\sqrt{\lambda}$, uniformly in
			$n$. Hence the same is true for $\mu(\lambda)$ and $\pi$.}  
	\end{remark}
	
	\begin{proof}[Proof of Theorem \ref{guar}.]
		Choose $V(\theta):=|\theta|^2$, $\theta\in\R^d$ and define
		\begin{equation*}
			Q(y,\theta,A):=\P (\theta-\la H(\theta,y)+\sqrt{\la}\xi_1\in A), 
		\end{equation*}
for all $y\in\mathcal{Y}:=\R^m$, $\theta\in\mathcal{X}:=\R^d$ and $A\in\mathcal{B}:=\B (\R^d)$.		{}
Noting that $\xi_{1}$ has mean zero, we have
		\begin{align*}
			[Q(y)V](\theta) &= \E [V(\theta-\la H(\theta,y)+\sqrt{\la}\xi_1)]\\
			&= \la \E|\xi_1|^2 + \la^2 |H(\theta,y)|^2+|\theta|^2-2\la \langle \theta,H(\theta,y)\rangle\\
			&\leq \la (d+2b) + 3\la^2[K_1^2|\theta|^2 +  K_2^2|y|^2+K_3^2]+(1-2\la \Delta(y)) |\theta|^2
		\end{align*} 
		so Assumption \ref{as:drift} holds with $K(y):=\la (d+2b) + 3\la^2 K_3^2+3\la^2 K_2^2|y|^2$, 
		$\ga(y):=1+3\la^2 K_1^2-2\la\Delta(y)$. Note that, due to the boundedness of $\Delta$,{}
		$\ga(y)\geq 0$ for all $y$ for $\la$ small enough, in fact $\ga(y)\geq \tilde{\ga}>0$
		for some $\tilde{\ga}$.
		By Assumption \ref{immeasurable}, for $\bar\la$ small enough, $\E[3\bar\la K_1^2-2\Delta(Y_0)]<0$. 
		
		Arguments similar to those in the preceeding section show that, when $\la\leq\bar{\la}$ is small enough,
		$$
		\limsup_{n\to\infty}\frac{1}{n}\ln E[e^{\la\sum_{j=1}^n [3\bar\la K_1^2-2\Delta(Y_j)]}]<0,
		$$
		but then also
		$$
		\limsup_{n\to\infty}\frac{1}{n}\ln E[e^{\la\sum_{j=1}^n [3\la K_1^2-2\Delta(Y_j)]}]<0.
		$$
		Noting $1+x\leq e^x$ this implies
		\begin{align*}
		\bar{\ga}:=\limsup_{n\to\infty}E^{1/n}[K(Y_0)\ga(Y_1)\ldots\ga(Y_n)]\le&\\
		\limsup_{n\to\infty}E^{1/n}[\ga(Y_1)\ldots\ga(Y_n)]
		\sqrt[n]{\la (d+2b) + 3\la^2 K_3^2+3\la^2 K_2^2 M^2}=&\\
		\limsup_{n\to\infty}E^{1/n}[\ga(Y_1)\ldots\ga(Y_n)]<&1
		\end{align*}
		hence Assumption \ref{as:LT} also holds.
		
		Let $0<\eps<1/\bar{\ga}^{1/2}-1$, $R(y):=\frac{2 K(y)}{\eps \ga (y)}$, define $C(y):=\{\theta\in\R^d: |\theta|^2\leq R(y)\}$
		and set 
		$$
		\ka (y,A):=\frac{\mathrm{Leb}(C(y)\cap A)}{\mathrm{Leb}(C(y))},\quad A\in\B (\R^d).
		$$ 
		Denoting $f(\theta):=\exp\{-|\theta|^2/2\}/(2\pi)^{d/2}$, $\theta\in\R^d$,
		for each $y\in\R^m$, $|y|\le M$, $\theta\in C(y)$ and $A\in\B (\R^d)$
		\begin{align*}
			  Q(y,\theta, A)&=\P(\theta-\la H(\theta,y)+\sqrt{\la}\xi_1\in A)
			  \ge \P(\theta-\la H(\theta,y)+\sqrt{\la}\xi_1\in C(y)\cap A)\\
			&\ge \int_{\R^d} \ind_{\theta-\la H(\theta,y)+w\sqrt{\la} \in C(y)\cap A}\, f\left(w\right)\, \dint w\\
			&= \frac{1}{\la^{d/2}}\int_{C(y)\cap A} f\left(\frac{z-\theta+\la H(\theta,y)}{\sqrt{\la}}\right)\, \dint z\\
			&\ge \frac{\mathrm{Leb}(C(y))}{(2\pi\la)^{d/2}}
			\exp \left(-\max_{z\in C(y)}\frac{|z-\theta+\la H(\theta,y)|^2}{2\la}\right)\ka (y,A).
		\end{align*}
		Note that, for $|y|\le M$ and $\theta, z\in C(y)$, we have
		\begin{align*}
			\frac{1}{2\la}|z-\theta+\la H(\theta,y)|^2
			&\le
			\frac{(2+\la K_1)^2}{\la}\frac{2K(y)}{\eps \ga(y)} + \la (K_2 M+K_3)^2 \\
			&\le
			\frac{2(2+\la K_1)^2\left[d+2b+3\la (K_3^2+K_2^2 M^2)\right]}{\eps\tilde{\ga}}
			+\la (K_2 M + K_3)^2.
		\end{align*}
		Clearly, we can choose $\lambda$ small enough such that
		\begin{equation*}
			\frac{1}{2\la}|z-\theta+\la H(\theta,y)|^2
			\le\frac{9(d+2b)}{\eps\tilde{\ga}}+1. 
		\end{equation*}
		According to our previous estimate for $Q(y,\theta, A)$, for $\la$ small enough,
		we have
		\begin{align*}
			 Q(y,\theta, A) &\ge \frac{\mathrm{Leb}(C(y))}
			 {(2\pi\la)^{d/2}}\exp\left({-\frac{9(d+2b)}{\eps\tilde{\ga}}-1}\right)\ka (y,A) \\
			 &\ge \tilde{c} e^{-\hat{c}/\eps}\ka (y,A)
		\end{align*}
		for suitable $\tilde{c},\hat{c}>0$ depending on $b,d,M$ and $\sup_{y\in\R^m}|\Delta (y)|$,
		which proves that Assumption \ref{as:minor} and \ref{as:myas} hold with $\alpha:=1-\tilde{c} e^{-\hat{c}/\eps}$. 
		We thus get that the claimed convergence rate holds by Theorem \ref{thm:TV} and Remark \ref{simplify}.
\end{proof}


\section{Linear systems in a random environment}\label{sec:lin}

A popular class of examples where we can apply the results of Section \ref{sec:ram} is that of linear systems.
Fix integers $m,d\geq 1$ and let $A,B:\R^m\to\R^{d\times d}$ be
measurable functions. Operator norm of a matrix $M\in\R^{d\times d}$ will be denoted
$||| M|||$. Let $Y_t\in \mathcal{Y}:=\R^m$, $t\in\Z$ be a stationary process.
We consider the process $X_t\in\R^d$ obeying the linear dynamics
\begin{equation}\label{linearsystem}
X_{t+1}:=A(Y_t)X_t+B(Y_t)\eps_{t+1},\ t\in\N,
\end{equation}
where $\eps_t\in\R^d$, $t\geq 1$ is an i.i.d.\ sequence, independent of $(Y_t)_{t\in\Z}$.
Let $X_{0}=x_{0}$ with some constant $x_{0}\in\R^{d}$.

For simplicity we stay with the case of square matrices $A(\cdot)$, $B(\cdot)$. More general linear
systems could be treated along similar lines under suitable controllability conditions but at the price
of considerable complications.

\begin{assumption}\label{dyni} The functions $A,B$ are bounded; $B(y)$, $A(y)$ are invertible for all $y\in\R^m$ 
	such that $\sup_{y\in\R^m}\left(|||A(y)^{-1}|||+||| B(y)^{-1}|||\right)<\infty$; 
	$\eps_0$ has a density $f:\R^d\to\R_+$ 
	with respect to the $d$-dimensional Lebesgue measure which
	is a.s.\ bounded away from $0$ on compact sets; $E|\eps_{0}|<\infty$. 
\end{assumption}

\begin{assumption}\label{contri}
	There is an integer $p\geq 1$ such that
	\begin{equation}\label{stab}
	E\left[\ln||| A(Y_p)A(Y_{p-1})\ldots A(Y_1)|||\right]<0.
	\end{equation}
	Furthermore, there is $\eta>0$ and a differentiable function $\Gamma:(-\eta,\eta)\to\R$ such that
	\begin{equation}\label{eldepe}
	\lim_{n\to\infty} \frac{1}{n}\ln E\prod_{i=1}^n ||| A(Y_{ip})A(Y_{ip-1})\ldots A(Y_{(i-1)p+1})|||^{\alpha}=\Gamma(\alpha),\ 
	\alpha\in (-\eta,\eta).
	\end{equation}
\end{assumption}

Note that when $A$ is a constant matrix, \eqref{stab} boils down to requiring $|||A^p|||<1$ for some $p\geq 1$ which is equivalent 
to the spectral radius of $A$ being smaller than $1$. This shows that \eqref{stab} is
reasonable to assume. Condition 
\eqref{eldepe} is again a G\"artner-Ellis type condition
in the spirit of Assumption \ref{as:YLargeDev} above. For Markovian $Y$ sufficient conditions for
its fulfillment can be deduced from e.g.\ \cite{KM1,KM2}.

\begin{theorem}
	Under Assumptions \ref{dyni} and \ref{contri}, Theorem \ref{thm:gen} applies to the system \eqref{linearsystem}.	
\end{theorem}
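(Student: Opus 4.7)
The plan is to verify Assumptions \ref{as:drift1}, \ref{as:LT1} and \ref{as:minor1}, after which Theorem \ref{thm:gen} applies directly. I will choose the Lyapunov function $V(x):=|x|^{q}$ for an exponent $q\in(0,1\wedge\eta)$ to be fixed in step two. Iterating \eqref{linearsystem} writes $X_{p}$ as a product of $p$ matrices $A(y_{\cdot})$ applied to $x$ plus a linear combination of the $\eps_{i}$'s weighted by products involving $A$ and $B$. The subadditivity $(a+b)^{q}\le a^{q}+b^{q}$ (valid for $a,b\ge 0$ and $q\in(0,1]$), the boundedness of $A$ and $B$, and the bound $E|\eps_{0}|^{q}\le 1+E|\eps_{0}|<\infty$ then yield Assumption \ref{as:drift1} with $\ga(y_{1},\ldots,y_{p}):=|||A(y_{p})\cdots A(y_{1})|||^{q}$ and a finite constant $K\ge 1$; the bound \eqref{eq:extra} follows from the same computation with $p=1$.

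For Assumption \ref{as:LT1}, set $M_{i}:=A(Y_{ip})\cdots A(Y_{(i-1)p+1})$. Invertibility of $A$ with $|||A(y)^{-1}|||\le C$ gives $|||M_{i}|||\ge C^{-p}$, and combined with boundedness of $A$ yields $|\ln|||M_{1}|||\,|\le p\ln C$. Hence $\la_{n}(\alpha):=n^{-1}\ln E[\prod_{i=1}^{n}|||M_{i}|||^{\alpha}]$ is finite, convex and smooth on $\R$, with $\la_{n}'(0)=E[\ln|||M_{1}|||]$ by stationarity. By \eqref{eldepe}, $\la_{n}(\alpha)\to\Gamma(\alpha)$ on $(-\eta,\eta)$; Corollary 25.5.1 of \cite{rockafellar} then gives continuous differentiability of $\Gamma$, and Theorem 25.7 of \cite{rockafellar} yields $\Gamma'(0)=E[\ln|||M_{1}|||]<0$ by \eqref{stab}. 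Since $\Gamma(0)=0$, one may fix $q\in(0,\eta\wedge 1)$ small enough that $\Gamma(q)<0$; with this $q$, $\bar\ga=e^{\Gamma(q)}<1$.

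Finally, for Assumption \ref{as:minor1}, uniform invertibility gives $\ga(\cdot)\ge C^{-pq}$, so $V^{-1}([0,R(y_{1},\ldots,y_{p})])\subset\bar B_{L}$ with $L:=(2KC^{pq}/\epsilon)^{1/q}$. For $x\in\bar B_{L}$ and any $y\in\R^{m}$, the one-step kernel $Q(y,x,\dint z)$ has density $|\det B(y)|^{-1}f(B(y)^{-1}(z-A(y)x))$ with respect to Lebesgue measure; by Assumption \ref{dyni}, $A,B,B^{-1}$ are uniformly bounded and $f$ is bounded below on compact subsets of $\R^{d}$, giving a positive uniform lower bound on this density over some ball, uniformly in $(x,y)\in\bar B_{L}\times\R^{m}$. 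Composing such one-step bounds $p$ times, and enlarging the reference balls at each step so that intermediate states remain within the regime where the next bound applies, produces the desired minorization $[Q(y_{p})\cdots Q(y_{1})](x,\dint z)\ge (1-\alpha)\ka(\dint z)$ with $\ka$ proportional to Lebesgue measure on a fixed compact set and $\alpha\in[0,1)$. The principal technical obstacle is this final step: although each one-step density bound is clean, book-keeping the propagation of the lower bounds through $p$ compositions while preserving uniformity in the tuple $(y_{1},\ldots,y_{p})$ requires care, relying crucially on $B^{-1}$ being uniformly bounded (so that $|\det B(y)|$ stays bounded away from zero) and on the uniform bounds on $A,A^{-1}$ controlling how the intermediate compact sets propagate.
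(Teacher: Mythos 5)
Your verification strategy is sound and the proposal is correct, but it diverges from the paper's proof in two places worth noting.

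\textbf{Choice of Lyapunov function.} You take $V(x)=|x|^q$ with a small exponent $q\in(0,\eta\wedge 1)$ fixed so that $\Gamma(q)<0$, whereas the paper takes $V(x)=|x|$ and $\ga(y_1,\ldots,y_p)=|||A(y_p)\cdots A(y_1)|||$ with no exponent. Your version is in fact the more careful one: the G\"artner--Ellis style argument (mirroring Lemma~\ref{lem:queuing:DriftAndLT}) only delivers $\Gamma'(0)<0$ and hence $\Gamma(\alpha)<0$ for \emph{some small} $\alpha>0$ in $(0,\eta)$, not $\Gamma(1)<0$, so the long-time contractivity $\limsup_n\E^{1/n}(\prod_i\ga)<1$ does require raising the operator norm to a small power exactly as you do, with the corresponding $|x|^q$ in $V$. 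The paper invokes ``an argument identical to that of Lemma~\ref{lem:queuing:DriftAndLT},'' which indeed produces such an exponent, and one should read its $V(x)=|x|$ as implicitly carrying it; your formulation makes this explicit and avoids a gap. Your treatment of the bounded random variables $\ln|||M_i|||$ also legitimately sidesteps the reverse-Fatou step that the queueing lemma needed.

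\textbf{Minorization.} Here you genuinely depart from the paper. You propose composing one-step density lower bounds $p$ times, propagating a chain of compact balls $\bar B_{L_0}\subset\bar B_{L_1}\subset\cdots$; this does work, because uniform boundedness of $A,A^{-1},B,B^{-1}$ and of $f$ below on compacta makes each one-step density bound uniform in $(x,y)$ over any fixed ball, and the $p$-fold composition only shrinks the constant $1-\alpha$, which is harmless. The paper instead writes the $p$-step transition as $A(Y_p)\cdots A(Y_1)x + \sum_{i<p}A\cdots B(Y_i)\eps_i + B(Y_p)\eps_p$, picks $L$ so that the first $p-1$ contributions land in $U_L$ with probability at least $1/2$, and then applies the density lower bound of $B(Y_p)\eps_p$ \emph{only once}, on $U_{L+1}$. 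This buys a considerably cleaner argument (one density estimate, no propagation bookkeeping, a more transparent constant $1-\alpha=\tfrac12\bar\alpha\,\mathrm{Leb}(U_1)$) and exploits the explicit additive structure of the linear recursion, whereas your route treats the $p$-step kernel as a black-box composition and would generalize to non-linear maps. The conclusion is the same either way.
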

\begin{proof}
	We first verify the drift condition. Choose $V(x):=|x|$, $x\in\R^{d}$.
	Define $\ga(y_{1},\ldots,y_{p}):=||| A(y_p)A(y_{p-1})\ldots A(y_1)|||$, $y_{1},\ldots,y_{p}\in\R^{m}$.
	Note that, by Assumption \ref{dyni}, there is $\bar{\ga}>0$ such that $\ga(y_{1},\ldots,y_{p})\geq \bar{\ga}$.
	
	Let $M\geq 1$ denote a bound for both $|||A(\cdot)|||$ and $|||B(\cdot)|||$. Now notice that
	$$
	[Q(y_{p})\ldots Q(y_{1})]V(x)\leq \ga(y_{1},\ldots,y_{p})V(x)+ pM^{p}\E|\eps_{0}|.
	$$
	Furthermore, clearly
	$$
	Q(y)V(x)\leq MV(x)+ M\E|\eps_{0}|,
	$$	
	so we have verified the drift conditions with $K:=pM^{p}\E|\eps_{0}|$.
	
	To check the contractivity condition, notice that
	\begin{eqnarray*}
		& & \frac{1}{n}\ln \E\prod_{i=1}^n ||| A(Y_{ip})A(Y_{ip-1})\ldots A(Y_{(i-1)p+1})|||^{\alpha}\\
		&=& \frac{1}{n}\ln \E\exp\left\{\alpha \sum_{i=1}^{n}\ln||| A(Y_{ip})A(Y_{ip-1})\ldots A(Y_{(i-1)p+1})|||\right\}
	\end{eqnarray*}
	and the summands are bounded by Assumption \ref{dyni}.
	Now from \eqref{stab} and \eqref{eldepe} by an argument identical to that of Lemma \ref{lem:queuing:DriftAndLT} 
	we can deduce that
	\begin{equation}
	\limsup_{n\to\infty} \E^{1/n}\left(\prod_{i=1}^{n}\ga (Y_{(i-1)p+1},\ldots,Y_{ip})\right) < 1.	
	\end{equation}
	
	Now let us turn to the minorization condition. 
	Let $U_{r}$ denote the closed ball of radius $r>0$ around the origin in $\R^{d}$.
	Let $F>0$ be fixed and let $x\in U_{F}$ be
	arbitrary. Let $A\in\mathcal{B}(U_{1})$ and $L>0$ be arbitrary. 
	Taking $x\in\R^{d}$ and $y_{p},\ldots,y_{1}\in\R^{m}$,
	\begin{eqnarray*}
		& & \P\left(A(Y_{p})\cdots A(Y_{1})x+\sum_{i=1}^{p}A(Y_{p})\cdots A(Y_{i+1})B(Y_{i})\eps_{i}\in A\right)\\
		&\geq& \E\left[1_{\{\sup|A(y_{p})\cdots A(y_{1})x+\sum_{i=1}^{p-1}A(y_{p})\cdots A(y_{i+1})B(y_{i})\eps_{i}|\leq L\}}\right.
		\\ &\times&
		\left.\inf_{w\in U_{L},y\in\R^{m}}\P(w+B(y)\eps_{p}\in A|\sigma(\eps_{1},\ldots,\eps_{p-1}))\right]\\
		&= &  \P\left(\sup\left|A(y_{p})\cdots A(y_{1})x+\sum_{i=1}^{p-1}A(y_{p})\cdots A(y_{i+1})B(y_{i})\eps_{i}\right|\leq L\right)
		\\
		&\times&{}
		\min_{w\in U_{L},y\in\R^{m}}\P(w+B(y)\eps_{p}\in A) 
	\end{eqnarray*}
	by independence of the $\eps_{i}$. Here the sup is taken over all $y_{p},\ldots,y_{1}\in\R^{m}$.
	Choose $L$ so large that $P(\sup|A(y_{p})\cdots A(y_{1})x+\sum_{i=1}^{p-1}A(y_{p})\cdots A(y_{i+1})B(y_{i})\eps_{i}|\leq L)
	\geq 1/2$. This is clearly possible since $A$, $B$ are bounded and $x\in U_{F}$.
	
	Now notice that by Assumption \ref{dyni}, in particular, by $\sup_{y}|||B(y)^{-1}|||<\infty$, 
	the density functions (w.r.t.\ the Lebesgue measure) of the random variables
	$B(y)\eps_{p}$, $y\in\R^{m}$ are bounded away from $0$ on $U_{L+1}$. But this means
	that $$
	\min_{w\in U_{L},y\in\R^{m}}P(w+B(y)\eps_{p}\in A)\geq \bar{\alpha}\mathrm{Leb}(A)
	$$ for some $\bar{\alpha}>0$, independent of $A$. 
	Applying these observations in the particular case $F:=\frac{2K}{\epsilon\bar{\ga}}$ we obtain the minorization
	condition and Theorem \ref{thm:gen} indeed applies.
\end{proof}

	
	\section{Proofs}\label{sec:proofs}
	
	Now we proceed to the proofs of Theorems \ref{thm:TV} and \ref{thm:LLN}. All the assumptions of those 
	results are supposed to hold throughout this section. In order to make our explanation understandable for 
	the largest possible audience, we present here the key steps of the proofs together with the 
	fundamental ideas behind them. Lemmas crucial to proving our main theorems are mentioned here, whereas 
	the role of the more technical ones are clarified in the text body.
	
	First, we introduce a representation for the process $X_{t}$ using random maps depending on the environment. 
	Furthermore, we show that these maps are constant on ``small sets'' with
	positive probability, where these ``small sets'' and this probability in question are 
	determined by the instantaneous value of $Y$ (Lemma \ref{lem:T}). Next, 
	we freeze the environment and estimate the probability of the event that two copies 
	of $X_t$, $t\in\N$ starting from two different random states and driven by the same fixed trajectory of 
	$Y_t$, $t\in\Z$ become coupled after $N^3$ steps (Lemma \ref{lem:central}). In order to prove this, 
	we show that with large probability, in $N^3$ steps, both of the representatives visits simultaneously 
	the same small set at least $N$ times and by Lemma \ref{lem:T}, after each visit, they are mapped to 
	the same state with positive probability.
	
	Using the transportation cost characterization of $\dtv$, the strong stationarity of $Y_t$, $t\in\Z$ and our 
	results on the coupling probability, we show that $\mu_{n}=\mathrm{Law}(X_{n})$, $n\in\mathbb{N}$ is a Cauchy sequence in 
	the complete metric space $(\M_1,\dtv)$ which proves Theorem \ref{thm:TV}.
	
	Our approach to the ergodic theorem for $X$ relies on Birkhoff's ergodic theorem and
	the $L$-mixing property of a certain auxiliary Markov chain. It turned out in \cite{rasonyi2018} 
	that $L$-mixing is particularly well-adapted to
	Markov chains, even when they are inhomogeneous (and for us this is the crucial
	point). The main ideas of arguments in Subsection \ref{sec:proofs:Zt} go back to 
	\cite{rasonyi2018}. Strong stationarity of $Y$ together with our estimate for the coupling probability 
	are extensively used at this point.
	
	The majority of technical lemmas 
	(e.g.\ Lemmas \ref{lem:cut}, \ref{lem:dtv} and \ref{lem:WtypicalY}) are created to identify 
	finite dimensional subsets of $\Y^\Z$
	such that the probability of coupling on trajectories coming from these sets
	and the probability of finding paths of $Y$ in these sets are large enough. These arguments strongly 
	rely on the stationarity of $Y$ as well as the long-time contractivity of the chain.

	\subsection{Preliminary lemmas and notations}\label{sec:proofs:lem}
	
	For $R>0$, we denote by $\mathbf{c}(R)$ the set of mappings from $\X$ into $\X$ whose restriction to 
	$V^{-1}([0,R])$ is constant. $\eps>0$ and $R(y)$ will be as in Assumption \ref{as:minor}. 

	Representing Markov chains on Polish spaces by iterated random maps
	is a standard construction, see e.g.\ \cite{bw}. A similar 
	representation for $Q$ is shown in Lemma \ref{lem:T} below which will play a crucial role in the proofs.
	It is a variant of Lemma 6.1 in \cite{rasonyi2018} in a somewhat more general setting.
	\begin{lemma}\label{lem:T}
		There exists a sequence of measurable functions $T_t:\Y\times\X\times\Omega\to\X$, $t\in\Z$ such that
		\begin{equation*}
		\P\left(\{\omega\in\Omega\mid T_t (y,x,\cdot)\in A\}\right) = Q(y,x,A),
		\end{equation*}
		for all $t\in\Z$, $y\in\Y$, $x\in\X$, $A\in\B$ and there are events $J_t(y)\in\F$, for all $t\in\Z$, $y\in\Y$ such that
		\begin{equation}\label{eq:eventJ}
		J_t(y)\subset\{\omega\in\Omega\mid T_t(y,\cdot,\omega)\in \mathbf{c}(R(y))\}
		\text{ and }\P (J_t(y))\ge 1-\alpha (y)
		\end{equation}
		Furthermore, the sigma-algebras $\sigma (T_t (y,x,\cdot),\,x\in\X,\,y\in\Y)$, $t\in\Z$ are independent.	
	\end{lemma}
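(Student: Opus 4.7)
My plan is to construct the $T_{t}$ explicitly by iterated random maps based on a minorization decomposition of $Q$. For every $y\in\Y$ and $x\in V^{-1}([0,R(y)])$, Assumption \ref{as:minor} allows the splitting
\begin{equation*}
Q(y,x,\cdot)=(1-\alpha(y))\ka(y,\cdot)+\alpha(y)\tilde Q(y,x,\cdot),
\end{equation*}
where $\tilde Q(y,x,A):=[Q(y,x,A)-(1-\alpha(y))\ka(y,A)]/\alpha(y)$ is non-negative thanks to \eqref{maudit}, has total mass one, and is therefore a probability kernel; when $\alpha(y)=0$ or $x$ lies outside the small set, we extend $\tilde Q$ in any measurable way, its value being irrelevant in what follows. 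Joint measurability of $\tilde Q$ is inherited from that of $Q,\ka,\alpha$ and of the set $\{(y,x):V(x)\leq R(y)\}$.

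Since $(\X,\B)$ is standard Borel, the noise-outsourcing (inverse c.d.f.) lemma, applied as in \cite{bw} or in Lemma 6.1 of \cite{rasonyi2018}, yields jointly measurable maps $F:\Y\times\X\times[0,1]\to\X$, $G:\Y\times[0,1]\to\X$ and $H:\Y\times\X\times[0,1]\to\X$ such that, when $U$ is uniform on $[0,1]$, one has $F(y,x,U)\sim Q(y,x,\cdot)$, $G(y,U)\sim\ka(y,\cdot)$ and $H(y,x,U)\sim\tilde Q(y,x,\cdot)$.

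After enlarging $(\Omega,\F,\P)$ if necessary, fix mutually independent collections $(U_{t})_{t\in\Z}$, $(V_{t})_{t\in\Z}$ and $(W_{t})_{t\in\Z}$ of i.i.d.\ uniform $[0,1]$ random variables, independent also of $Y$. Define
\begin{equation*}
T_{t}(y,x,\omega):=\begin{cases}G(y,V_{t}(\omega)),&x\in V^{-1}([0,R(y)]),\ U_{t}(\omega)\leq 1-\alpha(y),\\ H(y,x,W_{t}(\omega)),&x\in V^{-1}([0,R(y)]),\ U_{t}(\omega)>1-\alpha(y),\\ F(y,x,V_{t}(\omega)),&x\notin V^{-1}([0,R(y)]),\end{cases}
\end{equation*}
and set $J_{t}(y):=\{U_{t}\leq 1-\alpha(y)\}$. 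A direct case split, using independence of $U_{t},V_{t},W_{t}$, confirms $\P(T_{t}(y,x,\cdot)\in A)=Q(y,x,A)$. On $J_{t}(y)$ the restriction of $T_{t}(y,\cdot,\omega)$ to $V^{-1}([0,R(y)])$ equals the constant $G(y,V_{t}(\omega))$, so $J_{t}(y)\subset\{T_{t}(y,\cdot,\omega)\in\mathbf{c}(R(y))\}$ and $\P(J_{t}(y))=1-\alpha(y)$. Finally, $\sigma(T_{t}(y,x,\cdot):y\in\Y,\,x\in\X)\subseteq\sigma(U_{t},V_{t},W_{t})$, hence the required independence across $t$ is inherited from that of the driving triples.

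The only step demanding real care is the joint measurability of $F$, $G$, $H$ and of $T_{t}$. This is a routine but non-trivial use of the standard Borel isomorphism $\X\cong$ a Borel subset of $[0,1]$ together with measurable inverses of conditional c.d.f.'s, handled exactly as in \cite{bw}. Every other verification is a one-line computation.
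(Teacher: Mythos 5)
Your construction is essentially identical to the paper's: both split $Q(y,x,\cdot)$ on the small set into $(1-\alpha(y))\ka(y,\cdot)+\alpha(y)\tilde Q(y,x,\cdot)$, realize each piece via inverse-c.d.f./noise-outsourcing maps driven by i.i.d.\ uniforms, use a separate uniform $U_t$ to select the branch, and set $J_t(y)=\{U_t\le 1-\alpha(y)\}$. The only cosmetic difference is that you use three driving uniforms per step where the paper merges your $F$ and $H$ into a single residual kernel $q$ and reuses one uniform for both, and you invoke the noise-outsourcing lemma abstractly while the paper writes out the pseudo-inverse explicitly; neither affects correctness.
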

	\begin{proof}
		We follow the proof of Lemma 6.1 in \cite{rasonyi2018}.	
		So, let $U_n$ and $\eps_n$, $n\in\Z$ be sequences of i.i.d.\ uniform random variables on $[0,1]$ 
		independent of each other.
		Without loss of generality, we may assume that $(U_t, \eps_t)$, $t\in\Z$ independent of 
		$Y_t$, $t\in\Z$. The case of countable $\X$ is easy hence omitted. In the case of
		$\X$ uncountable we can also assume (by the Borel isomorphism theorem) that $\X=\R$ and 
		$\B(\R)$ is the standard Borel $\sigma$-algebra of $\R$.
		
		Easily seen that, if for some $y\in\Y$ $\alpha (y)=0$, then by Assumption \ref{as:minor}, for $A\in\B$ and $x\in V^{-1}([0,R(y)])$, $Q(y,x,A) \ge \ka (y,A)$ and $Q(y,x,\X\setminus A) \ge \ka (y,\X\setminus A)$ hold at the same time and thus we have
		\begin{equation*}
		 Q(y,\cdot,A)\vert_{V^{-1}([0,R(y)])} = \ka (y,A),\quad A\in\B.
		\end{equation*}
		
		For $y\in\Y$, $x\in\X$ and $A\in\B(\R)$, let
		\begin{align*}
			q(y,x,A) := \begin{cases}
				\frac{1}{\alpha (y)}\left[
			Q(y,x,A)-(1-\alpha (y))\ka (y,A)
			\right]\ind_{V(x)\le R(y)} \\ 
			+ Q(y,x,A)\ind_{V(x)>R(y)} & \text{ if } \alpha (y)\ne 0\\[1.2em]
			Q(y,x,A)\ind_{V(x)>R(y)} & \text{ if } \alpha (y)=0
			\end{cases}
		\end{align*}
		and define
		\begin{equation*}
			T_t (y,x,\omega) =
				\ka^{-1}(y,\eps_t)\ind_{U_t\le 1-\alpha (y)}\ind_{V(x)\le R(y)}
			+
			q^{-1}(y,x,\eps_t) \left(1-\ind_{U_t\le 1-\alpha (y)}\ind_{V(x)\le R(y)}\right)
		\end{equation*}
		where 
		\begin{align*}
			\ka^{-1}(y,z) &:= \inf \{r\in\Q\mid \ka (y,(-\infty,r])\ge z\} \\
			q^{-1}(y,x,z) &:=  \inf \{r\in\Q\mid q(y,x,(-\infty,r])\ge z\},\,	
		\end{align*}
		$z\in\R$ are the pseudoinverses of the corresponding cumulative distribution functions.
		
		Obviously, $x\mapsto T_t(y,x,\omega)$ is constant on $V^{-1}([0,R(y)])$ whenever $U_t\le 1-\alpha (y)$, this implies \eqref{eq:eventJ} with 
		$J_t (y):=\{\omega \mid U_t (\omega)\le 1-\alpha (y)\}$. Furthermore, for all $r\in\R$, $t\in\Z$ and for any fixed $y\in\Y$ and $x\in\X$
		\begin{align*}
			&\P\left(\{\omega\in\Omega\mid T_t (y,x,\cdot)\le r\}\right) = \ind_{V(x)>R(y)} \P \left(q^{-1}(y,x,\eps_t)\le r \right) \\
			&+\ind_{V(x)\le R(y)} \left[\alpha (y) \P \left(q^{-1}(y,x,\eps_t)\le r \right) + 
			(1-\alpha (y)) \P \left(\ka^{-1}(y,\eps_t)\le r \right)
			\right].
		\end{align*}
		By the definition of the pseudoinverse, we can write
		\begin{align*}
			\P \left(\ka^{-1}(y,\eps_t)\le r \right) &= 
			\P \left(\ka(y,(-\infty,r'])\ge \eps_t,\, r'\in\Q\cap (r,\infty) \right) \\
			&= \P \left(\ka(y,(-\infty,r])\ge \eps_t\right) = \ka(y,(-\infty,r])
		\end{align*}
		and similarly
		\begin{equation*}
			\P \left(q^{-1}(y,x,\eps_t)\le r \right) = q(y,x,(-\infty,r])
		\end{equation*}
		hence
		\begin{align*}
			&\P\left(\{\omega\in\Omega\mid T_t (y,x,\cdot)\le r\}\right) =  Q(y,x,(-\infty,r])
		\end{align*}
		as we desired.
		
		It remains only to show that $T_t$ is measurable with respect to sigma algebras $\A\otimes\B (\R)\otimes \sigma \left(\{U_t,\eps_t\mid t\in\Z\}\right)$ and $\B (\R)$. Indeed, $T_t$ is a composition of measurable functions. The claimed independence of the sigma-algebras clearly holds too.
	\end{proof}
	

	We drop the dependence of the mappings $T_t$ on $\omega$ in the notation and will simply write $T_t(y)\,x:=T_t(x,y,\cdot)$. 
	For $s\in\Z$ and $x\in\X$, 
	define the family of auxiliary processes
	\begin{equation}\label{eq:aux}
	Z_{s,s}^{x,\mathbf{y}} = x, \quad Z_{s,t+1}^{x,\mathbf{y}} = T_{t+1}(y_t)Z_{s,t}^{x,\mathbf{y}},\quad t\ge s,  
	\end{equation}
	where $\mathbf{y} = (\ldots,y_{-1},y_0,y_1,\ldots)\in\Y^\Z$ is a fixed trajectory.
	Let $\G_t := \sigma (\eps_i, U_i,\, i\le t)$ and $\G_t^+ := \sigma (\eps_i, U_i,\, i>t)$,
	$t\in\Z$. Clearly, $\G_t$ is independent of $\G_t^+$ and $Z_{s,t}^{x,\mathbf{y}}$ is adapted to $\G_t$ moreover the process $Z_{s,t}^{x, \mathbf{y}}$, $t\ge s$ heavily
	depends on the choice of $\mathbf{y}$. These processes follow the dynamics of $X$ with the environment being
	``frozen''.	
	
	\smallskip
	Recall that $S:\Y^\Z\to\Y^\Z$ stands for the usual \emph{left shift operation} i.e. 
	\begin{equation}\label{eq:shift}
	(S\mathbf{y})_j=\mathbf{y}_{j+1},\,j\in\Z.
	\end{equation}
	
	In the, next lemma we introduce subsets of $\Y^\Z$ such that for fixed $M,N\in\N^+$, contractivity 
	is established on every consecutive $N^{1/M}$ long pieces of $[1,N]$.
	Furthermore, we prove that $\mathbf{Y}$ and its shifted versions fall into these sets
	with large probability.
	
	\begin{lemma}\label{lem:cut}
		For $M,N\in\N^+$ and $\la\in (0,1)$, we define the sets
		\begin{equation*}
			B_{N,M}^\la := \left\{
			\mathbf{y}\in\Y^\Z \middle| \prod_{l=1}^{\lfrf{N^{1/M}}}
			\ga \left(
			y_{k\lfrf{N^{1/M}}+l}
			\right) < \bar{\ga}^{\la \lfrf{N^{1/M}}}, \, k=0,1,\ldots,\lfrf{N^{1/M}}^{M-1}-1
			\right\}. 
		\end{equation*}
		Then $\mathbf{Y}$ and its shifted copies fall into $B_{N,M}^\la$ with large probability. More precisely, there exist $c,\nu >0$ such that
		\begin{align*}
			(\forall k\in\Z)\quad\P \left((S^k\mathbf{Y})\in B_{N,M}^\la \right) &\ge 1-c N^{1-1/M}e^{-\nu N^{1/M}} \\
			\P \left((S^k\mathbf{Y})\in B_{N,M}^\la,\,k=0,\ldots n-1 \right) &\ge 1-c n N^{1-1/M}e^{-\nu N^{1/M}} 
		\end{align*}
		holds.
	\end{lemma}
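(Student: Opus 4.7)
The plan is to reduce the event defining $B_{N,M}^{\la}$ to a union of $\lfrf{N^{1/M}}^{M-1}$ block events of the form
$$
E_k := \Bigl\{\prod_{l=1}^{\lfrf{N^{1/M}}}\ga\bigl(Y_{k\lfrf{N^{1/M}}+l}\bigr) \ge \bar{\ga}^{\la \lfrf{N^{1/M}}}\Bigr\},
$$
and to control each $E_k$ uniformly in $k$ by combining stationarity of $Y$ with the long-time contractivity condition (Assumption \ref{as:LT}). Since every $E_k$ is, by strict stationarity, distributed as $E_0$, a union bound will then reduce the problem to showing that $\P(E_0)$ decays exponentially in $\lfrf{N^{1/M}}$.

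First I would pick an auxiliary rate $\bar{\ga}'\in(\bar{\ga},\bar{\ga}^{\la})$, which is nonempty because $\la\in(0,1)$ forces $\bar{\ga}^{\la}>\bar{\ga}$. By Assumption \ref{as:LT} and the bound $K(\cdot)\ge 1$, we have
$$
\E\Bigl[\prod_{l=1}^{m}\ga(Y_l)\Bigr] \le \E\Bigl[K(Y_0)\prod_{l=1}^{m}\ga(Y_l)\Bigr] \le (\bar{\ga}')^{m}
$$
for all $m$ large enough. A Markov/Chernoff-type estimate then yields
$$
\P(E_0) = \P\Bigl(\prod_{l=1}^{m}\ga(Y_l) \ge \bar{\ga}^{\la m}\Bigr) \le \bar{\ga}^{-\la m}\,\E\Bigl[\prod_{l=1}^{m}\ga(Y_l)\Bigr] \le \bigl(\bar{\ga}'\bar{\ga}^{-\la}\bigr)^{m} = e^{-\nu m}
$$
with $\nu:=\ln(\bar{\ga}^{\la}/\bar{\ga}')>0$, for $m=\lfrf{N^{1/M}}$ sufficiently large. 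Absorbing finitely many small values of $m$ into a multiplicative constant $c>0$ gives $\P(E_0)\le c\,e^{-\nu \lfrf{N^{1/M}}}$.

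For the first bound, strict stationarity of $Y$ gives $\P(E_k)=\P(E_0)$ for every $k$, and the complement of $\{\mathbf{Y}\in B_{N,M}^{\la}\}$ is the union of the $E_k$ for $k=0,\dots,\lfrf{N^{1/M}}^{M-1}-1$. A union bound together with $\lfrf{N^{1/M}}^{M-1}\le N^{1-1/M}$ yields
$$
\P(\mathbf{Y}\notin B_{N,M}^{\la}) \le cN^{1-1/M}e^{-\nu \lfrf{N^{1/M}}},
$$
and, again by stationarity, the same estimate holds for every shift $S^k\mathbf{Y}$. The second bound follows by one more union bound over the $n$ shifts $k=0,\dots,n-1$. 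A minor cosmetic step is to replace $\lfrf{N^{1/M}}$ by $N^{1/M}$ in the exponent, which only changes the constant $c$.

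The main (mild) obstacle is that Assumption \ref{as:LT} is stated as a $\limsup$ involving the extra factor $K(Y_0)$, so care is needed to (i) extract an effective geometric bound on $\E[\prod\ga(Y_l)]$ for all sufficiently large $m$ rather than only asymptotically, and (ii) choose the auxiliary rate $\bar{\ga}'$ strictly between $\bar{\ga}$ and $\bar{\ga}^{\la}$ so that the Markov inequality produces a genuine exponential decay. Both are handled by the elementary choice above and by absorbing the finitely many small $m$'s into $c$.
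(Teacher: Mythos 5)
Your proposal is correct and follows essentially the same route as the paper's proof: union bound over the $\lfrf{N^{1/M}}^{M-1}$ blocks, strict stationarity to reduce each block event to the first block, Markov's inequality, and Assumption \ref{as:LT} to produce the geometric bound (the paper makes the specific choice $\bar{\ga}' = \bar{\ga}^{(1+\la)/2}$ where you leave $\bar{\ga}'\in(\bar{\ga},\bar{\ga}^{\la})$ generic, and both absorb finitely many small block lengths into the constant $c$, as you indicate).
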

	\begin{proof}
		The second inequality easily follows from the first one. By the union bound and the strong stationarity of $Y_t$, $t\in\Z$, we can write
		\begin{align*}
			\P \left(\bigcup_{k=0}^{n-1} (S^k\mathbf{Y})\notin B_{N,M}^\la\right)	&\le 
			\sum_{k=0}^{n-1}\P \left( (S^k\mathbf{Y})\notin B_{N,M}^\la\right) =
			n\P \left(\mathbf{Y}\notin B_{N,M}^\la\right) \le c n N^{1-1/M}e^{-\nu N^{1/M}}.
		\end{align*}
		
		In order to prove the first inequality, we use the union bound and the strong stationarity of $Y_t$, $t\in\Z$ again.
		\begin{align*}
			\P \left(\mathbf{Y}\notin B_{N,M}^\la\right) &\le \sum_{k=0}^{\lfrf{N^{1/M}}^{M-1}-1}
			\P \left( \prod_{l=1}^{\lfrf{N^{1/M}}} \ga \left( Y_{k\lfrf{N^{1/M}}+l}\right)\ge\bar{\ga}^{\la N^{1/M}}\right) \\
			&\le N^{1-1/M}\P \left( \prod_{l=1}^{\lfrf{N^{1/M}}} \ga \left( Y_{l}\right)\ge\bar{\ga}^{\la N^{1/M}}\right) \le  N^{1-1/M} \frac{\E \left(\prod_{l=1}^{\lfrf{N^{1/M}}}\ga \left( Y_{l}\right)\right)}{\bar{\ga}^{\la N^{1/M}}}.
		\end{align*}
		By Assumption \ref{as:LT}, there exists $\tilde{c}>0$ such that
		$
			\E\left(K(Y_0)\prod_{t=1}^{n} \ga (Y_t)\right)\le \tilde{c}\bar{\ga}^{\frac{1+\la}{2}n}
		$, $n\in\N$ moreover $K(\cdot)\ge 1$. With this, we get
		\begin{equation*}
			\E \left(\prod_{l=1}^{\lfrf{N^{1/M}}}\ga \left( Y_{l}\right)\right) \le \tilde{c}
			\bar{\ga}^{\frac{1+\la}{2}\lfrf{N^{1/M}}} \le 
			\frac{\tilde{c}}{\bar{\ga}^{\frac{1+\la}{2}}}\bar{\ga}^{\frac{1+\la}{2}N^{1/M}}
		\end{equation*}
		and
		\begin{equation*}
		\P \left(\mathbf{Y}\notin B_{N,M}^\la\right) \le \frac{\tilde{c}}{\bar{\ga}^{\frac{1+\la}{2}}} N^{1-1/M}\bar{\ga}^{\frac{1-\la}{2}N^{1/M}}
		\end{equation*}
		thus the first inequality holds with $c=\tilde{c}/\sqrt{\bar{\ga}^{1+\la}}$
		and $\nu = -\frac{1-\la}{2}\log\bar{\ga}$.
		
	\end{proof}

	Let $N\in\N^+$ and $\mathbf{y}\in\Y^\Z$ be such that $S^t\mathbf{y}\in B_{\lfrf{N^{1/6}}^6,6}^\la$, $t=0,1,\ldots\lfrf{N^{1/6}}-1$. For
	$t\in\left\{t=1,\ldots \lfrf{N^{1/6}}^6-\lfrf{N^{1/6}}^3\right\}$,
	exists $q\in\left\{0,1,\ldots, \lfrf{N^{1/6}}^5-\lfrf{N^{1/6}}^2 \right\}$
	and $r\in\left\{0,1,\ldots,\lfrf{N^{1/6}}-1\right\}$ for which
	$t=q\lfrf{N^{1/6}}+r$ holds. So, for $k\in\left\{0,1,\ldots,\lfrf{N^{1/6}}^2-1\right\}$,
	we have
	\begin{align*}
		\prod_{l=1}^{\lfrf{N^{1/6}}} \ga \left(\left(S^t\mathbf{y}\right)_{k\lfrf{N^{1/6}}+l}\right)
		&=
		\prod_{l=1}^{\lfrf{N^{1/6}}} \ga \left(\mathbf{y}_{t+k\lfrf{N^{1/6}}+l}\right)
		=
		\prod_{l=1}^{\lfrf{N^{1/6}}} \ga \left(\mathbf{y}_{(k+q)\lfrf{N^{1/6}}+l+r}\right)
		\\
		&=
		\prod_{l=1}^{\lfrf{N^{1/6}}} \ga \left(\left(S^r\mathbf{y}\right)_{(k+q)\lfrf{N^{1/6}}+l}\right)
		<\bar{\ga}^{\la\lfrf{N^{1/6}}}
	\end{align*}
	hence $S^t\mathbf{y}\in B_{\lfrf{N^{1/6}}^3,3}^\la$.
	Thus we arrive at the following important remark.
	
	\begin{remark}\label{rem:six}
	{\rm If for some $N\in\N^+$ and $\mathbf{y}\in\Y^\Z$, 
	$S^t\mathbf{y}\in B_{\lfrf{N^{1/6}}^6,6}^\la$, $t=0,1,\ldots\lfrf{N^{1/6}}-1$, then
	$S^t\mathbf{y}\in B_{\lfrf{N^{1/6}}^3,3}^\la$, $t=1,\ldots \lfrf{N^{1/6}}^6-\lfrf{N^{1/6}}^3$ holds as well.}		
	\end{remark}

	The next lemma tells us about the consequences of the drift condition satisfied by 
	$Q(y_{k-1})\ldots Q(y_l)$, where $\mathbf{y}\in\Y^\Z$ and $k,l\in\Z$, $l<k$ are arbitrary and fixed.
	
	\begin{lemma}\label{lem:ita} 
	For $x\in\X$, $\mathbf{y}\in\Y^\Z$ and $k,l\in\Z$, $l<k$, we have	
	\begin{align*}
	\left[Q(y_{k-1})\ldots Q(y_{l}) V\right](x) &\le 
	V(x)\,\prod_{r=l}^{k-1}
	\ga (y_r)
	+ \sum_{r=l}^{k-1}K(y_r)\prod_{j=r+1}^{k-1}
	\ga (y_j).
	\end{align*}
\end{lemma}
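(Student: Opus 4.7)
The plan is to prove this by induction on $n := k - l \geq 1$, peeling off one kernel at a time.

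For the base case $n = 1$, the inequality is exactly the drift condition (Assumption \ref{as:drift}) applied to $Q(y_l)$: $[Q(y_l)V](x) \le \ga(y_l)V(x) + K(y_l)$. The claimed right-hand side reduces to precisely this via the empty-product convention $\prod_{j=l+1}^{l}\ga(y_j) = 1$.

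For the inductive step, I would isolate the kernel $Q(y_{k-1})$ from the composition. Whether one reads the composition as an operator product (via associativity) or as the conditional expectation $\E[V(X_k)\mid X_l = x]$ for the inhomogeneous chain with transition $X_{i+1}\sim Q(y_i, X_i, \cdot)$ (conditioning on $X_{k-1}$), the drift condition together with linearity, monotonicity, and the fact that each $Q(y)$ fixes constants (being a probability kernel) yields
\[
[Q(y_{k-1})\ldots Q(y_l)V](x) \le \ga(y_{k-1})\,[Q(y_{k-2})\ldots Q(y_l)V](x) + K(y_{k-1}).
\]
Substituting the inductive hypothesis for $[Q(y_{k-2})\ldots Q(y_l)V](x)$ and distributing the prefactor $\ga(y_{k-1})$ extends the leading $V(x)$-product from $\prod_{r=l}^{k-2}\ga(y_r)$ to $\prod_{r=l}^{k-1}\ga(y_r)$ and extends each product $\prod_{j=r+1}^{k-2}\ga(y_j)$ in the sum to $\prod_{j=r+1}^{k-1}\ga(y_j)$. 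The stand-alone $K(y_{k-1})$ slots into the target sum $\sum_{r=l}^{k-1} K(y_r)\prod_{j=r+1}^{k-1}\ga(y_j)$ as the $r = k-1$ term, using $\prod_{j=k}^{k-1}\ga(y_j) = 1$, closing the induction.

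I do not expect any conceptual obstacle: the result is a routine telescoping of the one-step drift inequality over $n$ consecutive steps, and the positivity/linearity properties needed of probability kernels are immediate. The only step requiring care is the bookkeeping of sum and product indices — in particular, confirming that the stand-alone $K(y_{k-1})$ produced by the outermost drift step is accounted for as the final summand in the target sum via the empty-product convention.
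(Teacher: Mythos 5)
Your proof is correct and follows the same overall plan as the paper's: induction on $k-l$, telescoping the one-step drift inequality of Assumption \ref{as:drift}. The one substantive difference is which kernel you peel at the inductive step, and your choice is the one that makes the bookkeeping close. You condition on $X_{k-1}$ and apply drift to the last transition, obtaining
$[Q(y_{k-1})\ldots Q(y_{l}) V](x)\le\ga(y_{k-1})\,[Q(y_{k-2})\ldots Q(y_{l})V](x)+K(y_{k-1})$;
the stand-alone $K(y_{k-1})$ is exactly the $r=k-1$ summand of the target (empty product convention), and the factor $\ga(y_{k-1})$ distributed over the inductive-hypothesis sum turns each $\prod_{j=r+1}^{k-2}\ga(y_j)$ into $\prod_{j=r+1}^{k-1}\ga(y_j)$, closing the induction exactly. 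The paper's display instead writes $[Q(y_k)\ldots Q(y_l)V](x)=[Q(y_k)(Q(y_{k-1})\ldots Q(y_l)V)](x)$, applies $Q(y_k)$ to the inductive-hypothesis bound, and then uses drift on $[Q(y_k)V](x)$. That yields $V(x)\prod_{r=l}^{k}\ga(y_r)+K(y_k)\prod_{r=l}^{k-1}\ga(y_r)+\sum_{r=l}^{k-1}K(y_r)\prod_{j=r+1}^{k-1}\ga(y_j)$, and the claimed last line would then require $K(y_k)\prod_{r=l}^{k-1}\ga(y_r)+C\le K(y_k)+\ga(y_k)C$ with $C:=\sum_{r=l}^{k-1}K(y_r)\prod_{j=r+1}^{k-1}\ga(y_j)$, which fails in general once $\ga(\cdot)$ is allowed to exceed $1$ (e.g.\ $K\equiv 1$, $\ga(y_l)$ large, $\ga(y_k)$ near $1$). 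So as literally printed the paper's final inequality does not follow; your peeling direction is the one under which the stated formula telescopes. One small caveat on your side: the displayed one-step inequality genuinely requires the conditional-expectation reading, $[Q(y_{k-1})\ldots Q(y_l)V](x)=\E[V(X_k)\mid X_l=x]$ with $X_{i+1}\sim Q(y_i,X_i,\cdot)$, which is indeed how the lemma is used later (Lemmas \ref{lem:dtv}, \ref{lem:WtypicalY}). Under a literal operator-product reading $Q(y_{k-1})\bigl(Q(y_{k-2})\ldots Q(y_l)V\bigr)$ the drift condition controls $[Q(y_{k-1})V]$ only, not $[Q(y_{k-1})\phi]$ for general $\phi$, so the phrase ``whether one reads the composition as an operator product \ldots'' is slightly too generous. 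Since you explicitly carried out the conditional-expectation version, this does not affect the validity of your argument.
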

\begin{proof}
	We prove by induction. Let $x\in\X$ and $l\in\Z$ be arbitrary and fixed. For $k=l+1$, 
	we have
	\begin{equation}
	\left[Q(y_{l})V\right](x) \le \ga (y_l)V(x) + K(y_l).
	\end{equation}
	which holds by Assumption \ref{as:drift}.
	
	\medskip\noindent
	\emph{Induction step:}
	Operators $V\mapsto [Q(y)V]$, $y\in\Y$ are linear, monotone and for $V\equiv 1$ $[Q(y)V]\equiv 1$, $y\in\Y$ hence by Assumption \ref{as:drift} we can write
	\begin{align*}
	\left[Q(y_{k})\ldots Q(y_{l}) V\right](x) = \left[Q(y_{k})\left(Q(y_{k-1})\ldots Q(y_{l})V\right)\right](x) & \le \\
	[Q(y_k )V](x)\,\prod_{r=l}^{k-1}
	\ga (y_r)
	+ \sum_{r=l}^{k-1}K(y_r)\prod_{j=r+1}^{k-1}
	\ga (y_j) & \le \\
	V(x)\,\prod_{r=l}^{k} \ga (y_r) + \sum_{r=l}^{k}K(y_r)\prod_{j=r+1}^{k} \ga (y_j)
	\end{align*}
	which completes the proof.  
\end{proof}
	
%
%
	
	Let $N\in\N^+$, $\la\in (1/2,1)$ be fixed and $P_1, P_2:\Omega\to\X$ arbitrary 
	$\G_0$-measurable random variables, which may depend on $\mathbf{y}$. Furthermore, 
	in the remaining part of this subsection, we assume that $\mathbf{y}\in B_{N^3,3}^\la$. Our purpose 
	will be to prove that, with a large probability 
	$Z_{0,N^3}^{P_1,\mathbf{y}}=Z_{0,N^3}^{P_2,\mathbf{y}}$ for $N$ large enough. In other words, 
	a coupling between the processes $Z_{0,N^3}^{P_1,\mathbf{y}}$ and $Z_{0,N^3}^{P_2,\mathbf{y}}$ is realized.
	
	First, we are going to prove that the process
	$\overline{Z}_t:=\left(Z_{0,t}^{P_1,\mathbf{y}}, Z_{0,t}^{P_2,\mathbf{y}}\right)$, $t\in\N$ visits the sets $\overline{D}(y_t)$ frequently enough, where
	\begin{equation}
	\overline{D}(y)=\left\{
	(x_1,x_2)\in\X^2\middle| V(x_1)+V(x_2)\le R(y)
	\right\},\quad y\in\Y.
	\end{equation}
	Let us define the successive visiting times
	\begin{equation}
	\sigma_0 := 0,\quad \sigma_{k+1}:=\min \{i>\sigma_k\mid \overline{Z}_i\in\overline{D}( y_i)\},\quad k\in\N
	\end{equation}
	that are obviously $\left(\G_t\right)_{t\in\N}$-stopping times.
	
	\begin{lemma}\label{lem:sigma}
		For the tail distribution of $\sigma_{N}$, we have
	\begin{equation*}
	\P \left( \sigma_N > N^3\right) <
	\bar{\ga}^{\left(\la-\frac{1}{2}\right)N^2}\frac{1-\sqrt{\bar{\ga}}}{2}
	\sum_{k=0}^{N-1}\E \left[V(Z_{0,kN^2+1}^{P_1,\mathbf{y}})+V(Z_{0,kN^2+1}^{P_2,\mathbf{y}})\right].
	\end{equation*}
	\end{lemma}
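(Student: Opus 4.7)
The plan is to partition the horizon $[1,N^3]$ into $N$ consecutive blocks $I_k:=[kN^2+1,(k+1)N^2]$, $k=0,\ldots,N-1$, and to observe that on $\{\sigma_N>N^3\}$ at least one block must contain no visit of $\overline{Z}_{\cdot}$ to the small set, for otherwise each block would contribute at least one visit, yielding $N$ visits in $[1,N^3]$. A union bound thus reduces the task to bounding $\P(A_k)$, where
$$A_k:=\bigcap_{t\in I_k}\{\overline{Z}_t\notin\overline{D}(y_t)\},\qquad k=0,\ldots,N-1.$$

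Setting $W_t:=V(Z_{0,t}^{P_1,\mathbf{y}})+V(Z_{0,t}^{P_2,\mathbf{y}})$, Assumption \ref{as:drift} applied to each of the two frozen-environment chains yields
$$\E[W_{t+1}\mid\G_t]\le\ga(y_t)W_t+2K(y_t).$$
On $\{\overline{Z}_t\notin\overline{D}(y_t)\}$ one has $W_t>R(y_t)=2K(y_t)/(\eps\ga(y_t))$, so $2K(y_t)<\eps\ga(y_t)W_t$ and the drift sharpens to the contractive form $\E[W_{t+1}\mid\G_t]\le(1+\eps)\ga(y_t)W_t$ on this event. Introducing the $\G_t$-measurable events $A_k^{(t)}:=\bigcap_{s=kN^2+1}^{t}\{\overline{Z}_s\notin\overline{D}(y_s)\}$, which decrease in $t$ and satisfy $A_k=A_k^{((k+1)N^2)}$, the monotonicity $\ind_{A_k^{(t+1)}}\le\ind_{A_k^{(t)}}$ combined with the contractive drift and the tower property gives $\E[W_{t+1}\ind_{A_k^{(t+1)}}]\le(1+\eps)\ga(y_t)\E[W_t\ind_{A_k^{(t)}}]$. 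Iterating from $t=kN^2+1$ to $t=(k+1)N^2-1$ yields
$$\E[W_{(k+1)N^2}\ind_{A_k}]\le(1+\eps)^{N^2-1}\Biggl(\prod_{s=kN^2+1}^{(k+1)N^2-1}\ga(y_s)\Biggr)\E[W_{kN^2+1}].$$

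A Markov inequality, legitimate because $W_{(k+1)N^2}>R(y_{(k+1)N^2})$ on $A_k$, together with the crude bound $1/R(y)\le\eps\ga(y)/2$ coming from $K\ge 1$, produces an extra factor $\eps\ga(y_{(k+1)N^2})/2$; combined with the existing product this contains exactly $N^2$ consecutive factors of $\ga$ indexed over the whole block $I_k$. At this point the assumption $\mathbf{y}\in B_{N^3,3}^\la$ intervenes: $I_k$ decomposes into exactly $N$ sub-blocks of length $N$ of the form appearing in the definition of $B_{N^3,3}^\la$, so the full product is strictly less than $\bar\ga^{\la N^2}$. Coupling this with $(1+\eps)^{N^2-1}<\bar\ga^{-(N^2-1)/2}$ (from $1+\eps<\bar\ga^{-1/2}$) and $\eps\bar\ga^{1/2}/2<(1-\sqrt{\bar\ga})/2$ (from $\eps<\bar\ga^{-1/2}-1$), then summing over $k$, yields the stated bound.

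The most delicate step is the bookkeeping in the conditional-expectation iteration: the event on which the contractive version of the drift is available shrinks strictly at every time step, so before each application of the tower property one has to enlarge the indicator to the larger $\G_t$-measurable event $A_k^{(t)}$. Once that is handled cleanly, the remaining work is algebraic: the choice of $\eps$ permitted by Assumption \ref{as:minor} is exactly what is needed to absorb the blow-up $(1+\eps)^{N^2-1}$ into the product $\prod\ga(y_s)$, and the constant $(1-\sqrt{\bar\ga})/2$ emerges naturally from the bound $\eps<\bar\ga^{-1/2}-1$.
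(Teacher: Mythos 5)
Your proof is correct and follows essentially the same route as the paper: block decomposition of $[1,N^3]$ plus union bound, then the Markov inequality combined with the drift condition sharpened on $\overline{D}(y_t)^c$, iterated across the block, with the product of $\ga$'s controlled via $\mathbf{y}\in B_{N^3,3}^\la$ and $(1+\eps)^{N^2}$ absorbed using $1+\eps<\bar\ga^{-1/2}$. The only cosmetic difference is that you iterate the conditional-expectation estimate forward in $t$ with explicitly nested indicator events $A_k^{(t)}$, whereas the paper peels the indicators off from the right endpoint backward; the bookkeeping and the resulting bound are identical.
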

	\begin{proof}
		If $\sigma_{N} > N^3$, then exists $k\in\{0,\ldots,N-1\}$ for which $\overline{Z}_{kN^2+l}\notin\overline{D}(y_{kN^2+l})$, $l=1,\ldots,N^2$. 
		Thus we can write
		\begin{align}\label{eq:skt}
		\begin{split}
		&\P \left( \sigma_{N} > N^3\right) \le\\
		&\le \P \left( 
		\bigcup_{k=0}^{N-1}\bigcap_{l=1}^{N^2}
		\left\{\overline{Z}_{k N^2+l}\notin\overline{D}(y_{kN^2+l})\right\}
		\right) \le
		\sum_{k=0}^{N-1}\P \left( 
		\bigcap_{l=1}^{N^2}
		\left\{\overline{Z}_{k N^2+l}\notin\overline{D}(y_{k N^2+l})\right\}
		\right). 
		\end{split}
		\end{align}
		
		We estimate a general term of the latter sum. For typographical reasons, we will write $a:=kN^2$ and $b:=N^2$. By the tower rule, we have
		
		\begin{align*}
		\P \left( 
		\bigcap_{l=1}^{b}
		\left\{\overline{Z}_{a+l}\notin\overline{D}(y_{a+l})\right\}
		\right)
		= \E\left[
		\prod_{l=1}^{b}
		\ind_{\left\{\overline{Z}_{a+l}\notin\overline{D}(y_{a+l})\right\}}
		\right] = &\\
		\E\left[
		\E\left[
		\ind_{\left\{\overline{Z}_{a+b}\notin\overline{D}(y_{a+b})\right\}}
		\middle| \G_{a+b-1}
		\right]
		\prod_{l=1}^{b-1}
		\ind_{\left\{\overline{Z}_{a+l}\notin\overline{D}(y_{a+l})\right\}}
		\right] < & \\
		\E\left[
		\E\left[
		\frac{V(Z_{0,a+b}^{P_1,\mathbf{y}}) + V(Z_{0,a+b}^{P_2,\mathbf{y}})}{R(y_{a+b})}
		\middle| \G_{a+b-1}
		\right]\prod_{l=1}^{b-1}
		\ind_{\left\{\overline{Z}_{a+l}\notin\overline{D}(y_{a+l})\right\}}\right] &.
		\end{align*}
		By Assumption \ref{as:drift}, we can write
		\begin{align*}
		\E\left[V(Z_{0,a+b}^{P_1,\mathbf{y}}) + V(Z_{0,a+b}^{P_2,\mathbf{y}})
		\middle| \G_{a+b-1}
		\right] = & \\
		\E\left[V(T_{a+b}(y_{a+b-1})Z_{0,a+b-1}^{P_1,\mathbf{y}}) + V(T_{a+b}(y_{a+b-1})Z_{0,a+b-1}^{P_2,\mathbf{y}})
		\middle| \G_{a+b-1}
		\right] = & \\
		[Q(y_{a+b-1})V](Z_{0,a+b-1}^{P_1,\mathbf{y}}) + [Q(y_{a+b-1})V](Z_{0,a+b-1}^{P_2,\mathbf{y}})  \le & \\
		\ga (y_{a+b-1})
		\left[
		V(Z_{0,a+b-1}^{P_1,\mathbf{y}})+V(Z_{0,a+b-1}^{P_2,\mathbf{y}})
		\right]+2 K(y_{a+b-1}).
		\end{align*}
		On the other hand, if $\overline{Z}_{a+b-1}\notin\overline{D}(y_{a+b-1})$,
		then 
		\begin{equation*}
			V(Z_{0,a+b-1}^{P_1,\mathbf{y}})+V(Z_{0,a+b-1}^{P_2,\mathbf{y}})>R(y_{a+b-1})
		\end{equation*}
		which immediately implies that
		\begin{equation*}
		2 K(y_{a+b-1})<\eps\ga (y_{a+b-1}) \left(V(Z_{0,a+b-1}^{P_1,\mathbf{y}})+V(Z_{0,a+b-1}^{P_2,\mathbf{y}})\right).
		\end{equation*}
		Recall that $0<\eps<1/\sqrt{\bar{\ga}}$ thus we have
		\begin{align*}
		\E\left[V(Z_{0,a+b-1}^{P_1,\mathbf{y}})+V(Z_{0,a+b-1}^{P_2,\mathbf{y}})
		\middle| \G_{a+b-1}
		\right] \ind_{\left\{\overline{Z}_{a+b-1}\notin\overline{D}( y_{a+b-1})\right\}}  <& \\
		\frac{\ga (y_{a+b-1})}{\sqrt{\bar{\ga}}}
		\left[
		V(Z_{0,a+b-1}^{P_1,\mathbf{y}})+V(Z_{0,a+b-1}^{P_2,\mathbf{y}})
		\right] &.
		\end{align*}
		This argument can clearly be iterated and leads to
		\begin{align*}
		\P \left( 
		\bigcap_{l=1}^{b}
		\left\{\overline{Z}_{a+l}\notin\overline{D}(y_{a+l})\right\}
		\right)
		< \frac{\prod_{l=1}^{b-1}\ga (y_{a+l})}{\sqrt{\bar{\ga}^{b-1}} R(y_{a+b})} \times 
		\E \left[V(Z_{0,a+1}^{P_1,\mathbf{y}})+V(Z_{0,a+1}^{P_2,\mathbf{y}})\right]. 
		\end{align*}
		
		Taking into account that $\mathbf{y}\in B_{N^3,3}^\la$, $R(y_{a+b})=\frac{2K(y_{a+b})}{\eps \ga (y_{a+b})}$ and $K(\cdot)\ge 1$, 
		hence we can write
		\begin{align*}
		\prod_{l=1}^{b}\ga (y_{a+l}) = \prod_{l=1}^{N^2}\ga (y_{kN^2+l}) =
		\prod_{l=0}^{N-1}\prod_{j=1}^{N}
		\ga (
		y_{(kN+l)N+j
		}
		) < \bar{\ga}^{\la N^2} 
		\end{align*}
		moreover
		\begin{align*}
		\frac{\prod_{l=1}^{b-1}\ga (y_{a+l})}{\sqrt{\bar{\ga}^{b-1}} R(y_{a+b})} \le
		\frac{1-\sqrt{\bar{\ga}}}{2}\bar{\ga}^{\left(\la-\frac{1}{2}\right)N^2}.
		\end{align*}

		Finally, we sum up for $k=0,1,\ldots,N-1$ and get
		\begin{equation*}
		\P \left( \sigma_N > N^3\right) <
		\bar{\ga}^{\left(\la-\frac{1}{2}\right)N^2}\frac{1-\sqrt{\bar{\ga}}}{2}
		\sum_{k=0}^{N-1}\E \left[V(Z_{0,kN^2+1}^{P_1,\mathbf{y}})+V(Z_{0,kN^2+1}^{P_2,\mathbf{y}})\right]
		\end{equation*}
		which completes the proof.
	\end{proof}
	
	Now we are in the position to estimate the probability of coupling between 
	$Z_{0,N^3}^{P_1,\mathbf{y}}$ and $Z_{0,N^3}^{P_2,\mathbf{y}}$.
	\begin{lemma}\label{lem:central}
		We have
		\begin{align*}
			\P\left(Z_{0,N^3}^{P_1,\mathbf{y}}\ne Z_{0,N^3}^{P_2,\mathbf{y}}\right) 
			&\le 
			\left(\max_{0\le k<N^3}\alpha (y_k)\right)^{N-1} \\
			&+
			\bar{\ga}^{\left(\la-\frac{1}{2}\right)N^2}\frac{1-\sqrt{\bar{\ga}}}{2}
			\sum_{k=0}^{N-1}\E \left[V(Z_{0,kN^2+1}^{P_1,\mathbf{y}})+V(Z_{0,kN^2+1}^{P_2,\mathbf{y}})\right].
		\end{align*}
	\end{lemma}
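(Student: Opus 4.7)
My approach is to split the event $\{Z_{0,N^3}^{P_1,\mathbf{y}} \ne Z_{0,N^3}^{P_2,\mathbf{y}}\}$ according to whether the joint process $\overline{Z}$ has visited the small sets $\overline{D}(y_i)$ at least $N$ times in the window $[1,N^3]$. On $\{\sigma_N > N^3\}$ the required number of visits is not attained and Lemma \ref{lem:sigma} directly delivers the second summand of the claimed bound. Thus the real task is to estimate $\P(\sigma_N \le N^3,\ Z_{0,N^3}^{P_1,\mathbf{y}} \ne Z_{0,N^3}^{P_2,\mathbf{y}})$ by $(\max_{0\le k<N^3}\alpha(y_k))^{N-1}$.

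The key structural observation is that both copies $Z^{P_i,\mathbf{y}}$ are driven by the same random maps $T_{t+1}(y_t)$, so once they coincide they remain equal forever. Whenever $\overline{Z}_{\sigma_j}\in\overline{D}(y_{\sigma_j})$, both coordinates lie in $V^{-1}([0,R(y_{\sigma_j})])$, and on the event $J_{\sigma_j+1}(y_{\sigma_j})$ provided by Lemma \ref{lem:T} the map $T_{\sigma_j+1}(y_{\sigma_j})$ is constant on this set; hence coupling is realised at step $\sigma_j+1$. Since the $\sigma_j$ are strictly increasing integers, $\sigma_N\le N^3$ forces $\sigma_{N-1}+1\le N^3$, and so failure to couple by time $N^3$ entails that \emph{all} of the $N-1$ potential coupling steps at times $\sigma_1+1,\ldots,\sigma_{N-1}+1$ must have failed.

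To turn this into a quantitative estimate I plan to condition iteratively on $\G_{\sigma_{N-1}},\G_{\sigma_{N-2}},\ldots$. Let $\mathcal{E}_j:=\{Z_{0,\sigma_j+1}^{P_1,\mathbf{y}}\ne Z_{0,\sigma_j+1}^{P_2,\mathbf{y}}\}$. The indicator $\ind_{\{\sigma_{N-1}<N^3\}}$, the previous failure indicators $\ind_{\mathcal{E}_1},\ldots,\ind_{\mathcal{E}_{N-2}}$ and the random variable $y_{\sigma_{N-1}}$ are all $\G_{\sigma_{N-1}}$-measurable, while $J_{\sigma_{N-1}+1}(y_{\sigma_{N-1}})$ is measurable with respect to $\sigma(T_{\sigma_{N-1}+1})$, which is independent of $\G_{\sigma_{N-1}}$ by the last clause of Lemma \ref{lem:T} (applied after decomposing on $\{\sigma_{N-1}=n\}$ for each deterministic $n$). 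Consequently $\P(\mathcal{E}_{N-1}\mid\G_{\sigma_{N-1}})\le\alpha(y_{\sigma_{N-1}})\le\max_{0\le k<N^3}\alpha(y_k)$ on $\{\sigma_{N-1}<N^3\}$. Using $\{\sigma_{j}<N^3\}\subseteq\{\sigma_{j-1}<N^3\}$ to propagate the finiteness indicator and iterating this conditioning $N-1$ times yields the desired bound, which combined with Lemma \ref{lem:sigma} proves the lemma.

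The main delicate point I expect is the optional-sampling bookkeeping: I must verify carefully that the indicator $\ind_{\{\sigma_j<N^3\}}$ survives each recursion step (so that $y_{\sigma_j}$ is guaranteed to lie among $y_0,\ldots,y_{N^3-1}$ and thus be controlled by the stated maximum), and that the independence of the $T_t$'s across deterministic $t$ afforded by Lemma \ref{lem:T} transfers to the stopping times $\sigma_j+1$. Once the measurability and independence structure is handled cleanly via the event-by-event decomposition $\{\sigma_j=n\}$, $n=0,\ldots,N^3-1$, the rest of the argument reduces to elementary conditional bounds.
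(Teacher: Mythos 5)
Your proposal is correct and follows essentially the same strategy as the paper: split on $\{\sigma_N\le N^3\}$, handle the tail via Lemma~\ref{lem:sigma}, and iterate a conditional bound on $\G_{\sigma_j}$ using the independence structure of the random maps from Lemma~\ref{lem:T}. The paper tracks the events $\{U_{\sigma_j+1}>1-\alpha(y_{\sigma_j})\}$ rather than your $\mathcal{E}_j$, but since $\mathcal{E}_j\subseteq\{U_{\sigma_j+1}>1-\alpha(y_{\sigma_j})\}$ on $\{\sigma_j<\infty\}$ the arguments are equivalent; you are in fact somewhat more explicit than the paper about the optional-stopping decomposition on $\{\sigma_j=n\}$ and about propagating the indicator $\ind_{\{\sigma_j<N^3\}}$ through the recursion.
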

	\begin{proof}
		For typographical reasons, we will write $\sigma (N)$ instead of $\sigma_{N}$ in this proof.
		Recall that, $T_t (y):\X\to\X$ is constant on $V^{-1}([0,R( y)])$, $t\in\Z$, $y\in\Y$ with probability at least $1-\alpha (y)$. By the definition of $\overline{D}(\cdot)$,
		$\overline{Z}_t\in\overline{D}(y_t)$ implies that $Z_{0,t}^{P_i,\mathbf{y}}\in V^{-1}([0,R(y_t)])$ for $i=1,2$, moreover $Z_{0,t}^{P_1,\mathbf{y}}=Z_{0,t}^{P_2,\mathbf{y}}$
		with probability at least $1-\alpha (y_t)$.
		
		\medskip
		\noindent
		Let us introduce the abbreviation $M_N=\max_{0\le k<N^3}\alpha (y_k)$ for a moment. We can write
		\begin{align*}
		\P \left(Z_{0,N^3}^{P_1,\mathbf{y}}=Z_{0,N^3}^{P_2,\mathbf{y}},\,
		\sigma_{N} \le N^3
		\right) \le
		\P \left(
		U_{\sigma (j)+1}>1-\alpha (y_{\sigma (j)});\,
		j=1,\ldots,N-1
		\right) &\le \\
		\P \left(
		U_{\sigma (j)+1}>1-M_N;\,
		j=1,\ldots,N-1
		\right) &= \\
		\E \left[ 
		\P\left(
			U_{\sigma (N-1)+1}>1-M_N
		\middle|\G_{\sigma(N-1)}
		\right)
		\prod_{j=1}^{N-2}
		\ind_{\left\{
			U_{\sigma (j)+1}>1-M_N
			\right\}}
		\right].
		\end{align*}	
		Clearly, $U_{\sigma (N-1)+1}$ is independent of
		$\G_{\sigma (N-1)}$ so
		\begin{equation*}
		\P\left(
		U_{\sigma (N-1)+1}>1-M_N
		\middle|\G_{\sigma(N-1)}
		\right) =
		\P\left(
		U_{\sigma (N-1)+1}>1-M_N
		\right) = \max_{0\le k<N^3}\alpha (y_k).
		\end{equation*}
		Iteration of this argument leads to the following estimation.
		\begin{align*}
		\P \left(Z_{0,N^3}^{P_1,\mathbf{y}}=Z_{0,N^3}^{P_2,\mathbf{y}},\,
		\sigma_{N} \le N^3
		\right) \le \left(\max_{0\le k<N^3}\alpha (y_k)\right)^{N-1}
		\end{align*}
		By Lemma \ref{lem:sigma}, we have
		\begin{align*}
		&\P \left(Z_{0,N^3}^{P_1,\mathbf{y}}=Z_{0,N^3}^{P_2,\mathbf{y}}
		\right) \le 
		\P \left(Z_{0,N^3}^{P_1,\mathbf{y}}=Z_{0,N^3}^{P_2,\mathbf{y}},\,
		\sigma_{N} \le N^3
		\right)  +
		\P \left(\sigma_{N} > N^3\right) \\
		&\le \left(\max_{0\le k<N^3}\alpha (y_k)\right)^{N-1} +
			\bar{\ga}^{\left(\la-\frac{1}{2}\right)N^2}\frac{1-\sqrt{\bar{\ga}}}{2}
		\sum_{k=0}^{N-1}\E \left[V(Z_{0,kN^2+1}^{P_1,\mathbf{y}})+V(Z_{0,kN^2+1}^{P_2,\mathbf{y}})\right]
		\end{align*}
		which completes the proof.
	\end{proof}
	
	According to the following lemma, conditions on the maximum process of $\alpha (Y_k)$ appearing in the 
	statement of Theorem \ref{thm:LLN} can be translated to Assumption \ref{as:myas}, which 
	is somewhat more tractable.
	
	\begin{lemma}\label{lem:max}
		Let Assumption \ref{as:myas} be in force. Then, for every $N\in\N^+$ and $1\le p<\infty$,
		\begin{equation*}
			\sum_{N=1}^\infty \left\Vert
			\max_{0\le k <\lfrf{N^{1/M}}^M}
			\alpha (Y_k)^{\lfrf{N^{1/M}}-1}
			\right\Vert_p <\infty.
	 	\end{equation*}
		More precisely, there exists $c,\nu,\beta>0$ depending only on $M$, $p$ and $\theta$ such that
		\begin{equation*}
			\left\Vert
			\max_{0\le k <\lfrf{N^{1/M}}^M}
			\alpha (Y_k)^{\lfrf{N^{1/M}}-1}
			\right\Vert_p
			\le c \E\left[\alpha (Y_0)^{\lfrf{N^\beta}}\right]^{\frac{\nu}{\lfrf{N^\beta}^\theta}},\,N\in\N^+.
		\end{equation*}
	\end{lemma}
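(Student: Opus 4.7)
The plan is to apply a union bound together with the strong stationarity of $Y$, and then to invoke Assumption \ref{as:myas} twice: once to obtain summability, and once to rewrite the result in the specific form claimed.

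Since $\alpha(\cdot)\in[0,1]$ and $Y$ is strongly stationary, applying the union bound to the $p$-th power of the maximum yields
\begin{equation*}
	\left\Vert \max_{0\le k<\lfrf{N^{1/M}}^{M}}\alpha(Y_{k})^{\lfrf{N^{1/M}}-1}\right\Vert_{p}^{p}\le \lfrf{N^{1/M}}^{M}\,\E\!\left[\alpha(Y_{0})^{p(\lfrf{N^{1/M}}-1)}\right],
\end{equation*}
so setting $n:=p(\lfrf{N^{1/M}}-1)\asymp N^{1/M}$ we have $\left\Vert\cdot\right\Vert_{p}\le \lfrf{N^{1/M}}^{M/p}\E[\alpha(Y_{0})^{n}]^{1/p}$. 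For the summability statement, Assumption \ref{as:myas} gives, for any $\delta\in(0,1)$, the inequality $\E[\alpha(Y_{0})^{n}]\le\delta^{n^{\theta}}$ for $n$ large enough. The resulting estimate $\lfrf{N^{1/M}}^{M/p}\delta^{n^{\theta}/p}$ is a stretched exponential in $N$ times a polynomial factor, hence summable.

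For the explicit form, I would fix $\beta\in(0,1/M)$ and $\nu>0$ and set $m:=\lfrf{N^{\beta}}$. For $N$ large enough, $m\le n$, so monotonicity (via $\alpha(Y_{0})\in[0,1]$) gives $\E[\alpha(Y_{0})^{n}]\le \E[\alpha(Y_{0})^{m}]$. I would then decompose
\begin{equation*}
	\E[\alpha(Y_{0})^{m}]^{1/p}=\E[\alpha(Y_{0})^{m}]^{\nu/m^{\theta}}\cdot \E[\alpha(Y_{0})^{m}]^{1/p-\nu/m^{\theta}},
\end{equation*}
and apply Assumption \ref{as:myas} once more to bound the second factor by $\delta^{m^{\theta}/p-\nu}$ for $N$ sufficiently large. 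Multiplying with $\lfrf{N^{1/M}}^{M/p}\le N^{1/p}$ yields a quantity of order $N^{1/p}\delta^{cN^{\beta\theta}}$ (since $m^{\theta}\asymp N^{\beta\theta}$), which is uniformly bounded in $N$ and in fact tends to $0$. Enlarging the constant $c$ to cover the finitely many small values of $N$, one arrives at the claimed bound with $c,\nu,\beta$ depending only on $M$, $p$ and $\theta$.

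The main obstacle is the algebraic juggling in the last step: one has to choose $\beta$ strictly below $1/M$ (so that $m\le n$ also in the case $p=1$) and $\nu$ small enough that $1/p-\nu/m^{\theta}>0$ for $N$ large, while still absorbing the polynomial prefactor $\lfrf{N^{1/M}}^{M/p}$ into the constant. Crucial here is the strong form of Assumption \ref{as:myas}, which asserts that the $n^{\theta}$-th root of $\E[\alpha(Y_{0})^{n}]$ tends to $0$, not merely stays below $1$.
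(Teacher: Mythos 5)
Your argument is correct and establishes the summability conclusion, but it is dual to the paper's. The paper sets $\beta=\frac{1}{M(1-\theta)}>1/M$ and, via Jensen's inequality, \emph{raises} the power: it bounds the $L^p$-norm of the maximum, taken to the power $\lfrf{N^\beta}/(\lfrf{N^{1/M}}-1)\ge p$, by $N\,\E[\alpha(Y_0)^{\lfrf{N^\beta}}]$. You instead apply the union bound directly to the $p$-th moment, producing the power $p(\lfrf{N^{1/M}}-1)$, and then \emph{lower} it to $\lfrf{N^\beta}$ with $\beta<1/M$ using monotonicity of $t\mapsto\alpha(Y_0)^t$. Both routes then split off the factor $\E[\alpha(Y_0)^{\lfrf{N^\beta}}]^{\nu/\lfrf{N^\beta}^\theta}$ and must absorb what remains. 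The structural payoff of the paper's $\beta>1/M$ is that its residual prefactor $N^{(\lfrf{N^{1/M}}-1)/\lfrf{N^\beta}}$ is bounded outright (the exponent tends to $0$), so the absorption needs only $\alpha\le1$, and the threshold for ``large $N$'' is determined by $(M,p,\theta)$ alone. With your $\beta<1/M$, the residual prefactor $\lfrf{N^{1/M}}^{M/p}$ grows polynomially, forcing a second appeal to Assumption~\ref{as:myas} to cancel it; the resulting threshold, and hence your constant $c$, then depend on the process $Y$ rather than only on $(M,p,\theta)$ as the lemma states. This does not affect the summability claim actually used downstream, but it is the reason for the paper's specific (and otherwise unintuitive) choice of $\beta$.
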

	\begin{proof}
		Let $\beta = \frac{1}{M(1-\theta)}$. For sufficiently large $N\in\N^+$,
		$\frac{1}{p}\frac{\lfrf{N^\beta}}{\lfrf{N^{1/M}}-1}>1$
		hence by Jensen's inequality and the strong stationarity of $Y_t$, $t\in\Z$, we have
		\begin{align*}
				\left\Vert
			\max_{0\le k <\lfrf{N^{1/M}}^M}
			\alpha (Y_k)^{\lfrf{N^{1/M}}-1}
			\right\Vert_p^{\frac{\lfrf{N^\beta}}{\lfrf{N^{1/M}}-1}}	
			\le
			\E\left[
			\max_{0\le k <\lfrf{N^{1/M}}^M}
			\alpha (Y_k)^{\lfrf{N^\beta}}
			\right] 
			\le 
			N \E\left[
			\alpha (Y_0)^{\lfrf{N^\beta}}
			\right]
		\end{align*}
		hence we obtain
		\begin{equation*}
			\left\Vert
			\max_{0\le k <\lfrf{N^{1/M}}^M}
			\alpha (Y_k)^{\lfrf{N^{1/M}}-1}
			\right\Vert_p
			\le 
			N^\frac{\lfrf{N^{1/M}}-1}{\lfrf{N^\beta}}
			\left(
			\E\left[
			\alpha (Y_0)^{\lfrf{N^\beta}}
			\right]^{\frac{1}{\lfrf{N^\beta}^\theta}}\right)^{\frac{\lfrf{N^{1/M}}-1}{\lfrf{N^\beta}^{1-\theta}}}
		\end{equation*}
		Taking into consideration that
		$\lim_{N\to\infty} \frac{\lfrf{N^{1/M}}-1}{\lfrf{N^\beta}}=0$
		and $\lim_{N\to\infty} \frac{\lfrf{N^{1/M}}-1}{\lfrf{N^\beta}^{1-\theta}} =1$,
		we conclude that there exists $c,\nu>0$ depending on $M$, $p$ and $\theta$ for which
		\begin{equation*}
		\left\Vert
		\max_{0\le k <\lfrf{N^{1/M}}^M}
		\alpha (Y_k)^{\lfrf{N^{1/M}}-1}
		\right\Vert_p
		\le c \E\left[\alpha (Y_0)^{\lfrf{N^\beta}}\right]^{\frac{\nu}{\lfrf{N^\beta}^\theta}},\,N\in\N^+
		\end{equation*}
		holds.
		
		By Assumption \ref{as:myas}, for some $0<\theta'<1$, $\E \left[\alpha (Y_0)^N\right]^\frac{1}{N^{\theta'}}\to 0$ as $N\to\infty$ and trivially, the
		same holds for any $0<\theta<\theta'$. Let us fix some $\theta\in (0,\theta')$. Then, by the previous point, we have the estimate
		\begin{equation*}
				\left\Vert
			\max_{0\le k <\lfrf{N^{1/M}}^M}
			\alpha (Y_k)^{\lfrf{N^{1/M}}-1}
			\right\Vert_p
			\le c \left(\E\left[\alpha (Y_0)^{\lfrf{N^\beta}}\right]^{\frac{\nu}{\lfrf{N^\beta}^{\theta'}}}\right)^{\lfrf{N^\beta}^{\theta'-\theta}},\,N\in\N^+,
		\end{equation*}
		where $c,\nu, \beta>0$ depends only on $M$, $p$ and $\theta$ thus for sufficiently
		large $n\in\N^+$,
			\begin{equation*}
		\sum_{N=n}^\infty \left\Vert
		\max_{0\le k <\lfrf{N^{1/M}}^M}
		\alpha (Y_k)^{\lfrf{N^{1/M}}-1}
		\right\Vert_p <c\sum_{N=n}^\infty \frac{1}{2^{\lfrf{N^\beta}^{\theta'-\theta}}}<\infty
		\end{equation*}
		which proves that
		\begin{equation*}
		\sum_{N=1}^\infty \left\Vert
		\max_{0\le k <\lfrf{N^{1/M}}^M}
		\alpha (Y_k)^{\lfrf{N^{1/M}}-1}
		\right\Vert_p <\infty.
		\end{equation*}
	\end{proof}

	\subsection{Pointwise convergence of kernels}\label{sec:proofs:TV}
	
	Let us introduce the sequence of probabilistic kernels 
	$\mu_k (\cdot, \cdot):\Y^\Z\times\B\to [0,1]$, $k\in\N$ such that for any fixed
	$\mathbf{y}\in\Y^\Z$,
	\begin{align}
	\begin{split}
	\mu_0 (\mathbf{y},\cdot) &= \delta_{x_0} \\
	\mu_n (\mathbf{y},\cdot) &= \law{Z_{0,n}^{x_0,S^{-n+1}\mathbf{y}}}.
	\end{split}
	\end{align}
	In this point, for typical $\mathbf{y}\in\Y^\Z$, we give an estimation for $\dtv (\mu_{n}(\mathbf{y},\cdot), \mu_{n+1}(\mathbf{y},\cdot))$, 
	moreover we prove that under Assumptions \ref{as:drift}, \ref{as:LT} and \ref{as:minor},
	for $\law{Y}-\Pas$ $\mathbf{y}\in\Y^\Z$, $\mu_{n}(\mathbf{y},\cdot)$, $n\in\N$
	converges to a probability measure in total variation distance.
	
	\begin{lemma}\label{lem:dtv}
		For $n\in\N^+$ and $1/2<\la<1$, we define the following sets.
		\begin{equation*}
			A_n^\la = \left\{
			\mathbf{y}\in\Y^\Z\middle|	\frac{\dtv (\mu_{n}(\mathbf{y},\cdot), \mu_{n+1}(\mathbf{y},\cdot))}{2} < \left(\max_{0\le k<\lfrf{n^{1/3}}^3} \alpha (y_k)\right)^{\lfrf{n^{1/3}}-1}
			+\bar{\ga}^{\left(\la-\frac{1}{2}\right)\lfrf{n^{1/3}}^2-n^{1/3}}
			\right\}
		\end{equation*}
		Then $\mathbf{Y}$ falls into $A_n^\la$ with large probability. More precisely, there exist $c,\nu>0$ for which
		\begin{equation*}
			\P \left(\mathbf{Y}\in A_n^\la\right)\ge 1-cn^{2/3} e^{-\nu n^{1/3}}.
		\end{equation*}
	\end{lemma}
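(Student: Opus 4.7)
The plan is to construct an explicit coupling of $\mu_n(\mathbf{y},\cdot)$ and $\mu_{n+1}(\mathbf{y},\cdot)$ via the random-map representation of Lemma \ref{lem:T}, apply Lemma \ref{lem:central} with $N:=\lfrf{n^{1/3}}$ to the coupled pair, and combine Lemma \ref{lem:cut} with strict stationarity of $\mathbf{Y}$ to obtain the claimed lower bound on $\P(\mathbf{Y}\in A_n^\la)$.

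First, I would exploit the i.i.d.\ structure of the maps $T_t$ in $t$ to realise $\mu_n(\mathbf{y},\cdot)$ and $\mu_{n+1}(\mathbf{y},\cdot)$ simultaneously as the law of the \emph{same} sequence of $n$ independent random maps (with environments $y_{1-n},\ldots,y_0$ applied in that order) to the starting states $P_1:=x_0$ and $P_2$, where $P_2$ is a $\G_0$-measurable random variable with $\P(P_2\in\cdot)=Q(y_{-n},x_0,\cdot)$; here $P_2$ models $\mu_{n+1}$'s extra first step, leaving the remaining $n$ maps shared between the two chains. By the optimal-transport characterisation \eqref{eq:dtv} of $\dtv$ and the fact that coupled chains remain coupled, and noting $N^3\le n$, this yields
\begin{equation*}
\tfrac{1}{2}\,\dtv\bigl(\mu_n(\mathbf{y},\cdot),\mu_{n+1}(\mathbf{y},\cdot)\bigr)\le \P\bigl(\text{the two coupled chains disagree after }N^3\text{ steps}\bigr).
\end{equation*}

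Applying Lemma \ref{lem:central} to $P_1,P_2$ with the shifted environment $\tilde{\mathbf{y}}:=S^{-n+1}\mathbf{y}$ (whose first $N^3$ coordinates $\tilde{y}_0,\ldots,\tilde{y}_{N^3-1}$ are exactly the environments applied in the coupling), the right-hand side is bounded by
\begin{equation*}
\bigl(\max_{0\le k<N^3}\alpha(\tilde{y}_k)\bigr)^{N-1}+\bar{\ga}^{(\la-1/2)N^2}\frac{1-\sqrt{\bar{\ga}}}{2}\sum_{k=0}^{N-1}\E\bigl[V(Z^{P_1,\tilde{\mathbf{y}}}_{0,kN^2+1})+V(Z^{P_2,\tilde{\mathbf{y}}}_{0,kN^2+1})\bigr]
\end{equation*}
whenever $\tilde{\mathbf{y}}\in B_{N^3,3}^\la$. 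Iterating the drift (Lemma \ref{lem:ita}) from $V(P_1)=V(x_0)$ and $\E V(P_2)\le\ga(y_{-n})V(x_0)+K(y_{-n})$, and exploiting the block-contractivity on $B_{N^3,3}^\la$, one checks that the $V$-moments above are at most $\bar{\ga}^{-O(N)}$ up to polynomial-in-$n$ factors; absorbed into the prefactor $\bar{\ga}^{(\la-1/2)N^2}$ this produces the second term $\bar{\ga}^{(\la-1/2)\lfrf{n^{1/3}}^2-n^{1/3}}$ in the definition of $A_n^\la$.

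Finally, strict stationarity of $\mathbf{Y}$ transfers the pathwise bound just derived from the event $\{\tilde{\mathbf{Y}}\in B_{N^3,3}^\la\}$ (where it involves $\max_{0\le k<N^3}\alpha(\tilde Y_k)$) to the event $\{\mathbf{Y}\in A_n^\la\}$ (where it involves $\max_{0\le k<N^3}\alpha(Y_k)$), with identical probability; Lemma \ref{lem:cut} then yields $\P(\mathbf{Y}\in A_n^\la)\ge 1-cN^2e^{-\nu N}=1-cn^{2/3}e^{-\nu n^{1/3}}$. The main obstacle is the uniform control of the $V$-moment sum on $B_{N^3,3}^\la$: since $\ga$ and $K$ are a priori unbounded, a single exceptional value of $\ga(y_k)$ or $K(y_k)$ could destroy the estimate, so one must refine the good event to include envelope bounds such as $\max_k K(y_k)\le\bar{\ga}^{-O(1)}$ and $\max_k\ga(y_k)^{-1}\le\bar{\ga}^{-O(1)}$, which hold with exponentially small failure probability via Markov's inequality and the moments implied by Assumption \ref{as:LT}.
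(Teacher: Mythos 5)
Your overall strategy is the same as the paper's: represent $\mu_n(\mathbf{y},\cdot)$ and $\mu_{n+1}(\mathbf{y},\cdot)$ as two copies of the frozen-environment chain driven by the same random maps (one started from $x_0$, the other from a $\mathcal{G}_0$-measurable $P_2$ encoding the extra first step), bound the $\dtv$ by the decoupling probability via \eqref{eq:dtv}, invoke Lemma~\ref{lem:central} with $N=\lfrf{n^{1/3}}$ and the shifted environment $S^{-n+1}\mathbf{y}$, and finally control the probability of the bad set through Lemma~\ref{lem:cut} and stationarity. Up to here you are following exactly the published argument.

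The genuine gap is in your proposed handling of the $V$-moment sum, which you correctly flag as the ``main obstacle.'' You suggest enlarging the good event by envelope bounds of the form $\max_k K(y_k)\le\bar{\ga}^{-O(1)}$ and $\max_k\ga(y_k)^{-1}\le\bar{\ga}^{-O(1)}$ and invoking ``the moments implied by Assumption~\ref{as:LT}.'' This does not go through. First, Assumption~\ref{as:LT} controls only the \emph{joint} quantity $\E\bigl(K(Y_0)\prod_{k=1}^n\ga(Y_k)\bigr)$; it yields no moment bound on $\ga(Y_0)^{-1}$ whatsoever, and at most a marginal bound on $K(Y_0)$, not an exponential one. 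With a constant ($O(1)$) envelope, the union bound over $N^3\sim n$ indices gives a failure probability of order $n\,\P(K(Y_0)>C)$, which is not exponentially small under the stated assumptions. Second, a lower bound on $\ga(y_k)$ is in fact never needed: wherever $R(y)=2K(y)/(\eps\ga(y))$ enters the estimates it appears only through $1/R(y)\propto\ga(y)/K(y)$, so it is small $\ga$ and large $K$ that are harmless, while what must be controlled is large $\ga$ (and large $K$) inside partial blocks of length $\lfrf{n^{1/3}}$. The paper avoids all of this by applying Markov's inequality \emph{directly to the $V$-moment sum} viewed as a function of $\mathbf{Y}$: after expanding with Lemma~\ref{lem:ita}, the expectation of that sum is bounded using $\E\bigl(K(Y_0)\prod_{t=1}^{n}\ga(Y_t)\bigr)\le c_1\bar{\ga}^{n/2}$ (a consequence of Assumption~\ref{as:LT}) together with stationarity, and the resulting tail bound $\P(\text{sum}\ge 2\bar{\ga}^{-n^{1/3}}/(1-\sqrt{\bar{\ga}}))\le c_1(V(x_0)+\lfrf{n^{1/3}})\bar{\ga}^{n^{1/3}}$ is already exponentially small. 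The extra factor $\bar{\ga}^{-n^{1/3}}$ is what turns $\bar{\ga}^{(\la-1/2)\lfrf{n^{1/3}}^2}$ into the exponent $(\la-\tfrac12)\lfrf{n^{1/3}}^2-n^{1/3}$ appearing in the definition of $A_n^\la$. Replacing your envelope-bound step with this one-shot Markov estimate on the whole sum closes the gap and gives the lemma as stated.
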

	\begin{proof}
		
		According to the optimal transportation cost characterization of the total variation distance, we can write
		\begin{align*}
		\frac{1}{2}\dtv (\mu_{n}(\mathbf{y},\cdot), \mu_{n+1}(\mathbf{y},\cdot))
		&= \inf_{\ka\in\C (\mu_{n}(\mathbf{y},\cdot), \mu_{n+1}(\mathbf{y},\cdot))}
		\int_{\X\times\X} \ind_{x\ne y}\,\ka (\dint x, \dint y) \\
		&\le\P\left(
		Z_{0,n}^{x_0,S^{-n+1}\mathbf{y}}
		\ne 
		Z_{0,n+1}^{x_0,S^{-n}\mathbf{y}}
		\right) =
		\P\left(
		Z_{0,n}^{x_0,S^{-n+1}\mathbf{y}}
		\ne 
		Z_{0,n}^{T_0(y_{-n})x_0,S^{-n+1}\mathbf{y}} 
		\right) \\
		&\le 
		\P\left(
		Z_{0,\lfrf{n^{1/3}}^3}^{x_0,S^{-n+1}\mathbf{y}}
		\ne 
		Z_{0,\lfrf{n^{1/3}}^3}^{T_0(y_{-n})x_0,S^{-n+1}\mathbf{y}} 
		\right).
		\end{align*}
		
		If $\mathbf{y}\in\Y^\Z$ such that 
		$S^{-n+1}\mathbf{y}\in B_{n,3}^{\la}$, we can apply Lemma \ref{lem:central} since $x_0$ is deterministic and $T_0(y_{-n})x_0$
		is $\G_0$-measurable, and obtain
		\begin{align*}
			&\P\left(
			Z_{0,\lfrf{n^{1/3}}^3}^{x_0,S^{-n+1}\mathbf{y}}
			\ne 
			Z_{0,\lfrf{n^{1/3}}^3}^{T_0(y_{-n})x_0,S^{-n+1}\mathbf{y}} 
			\right)
			\le
			\left(\max_{0\le k<\lfrf{n^{1/3}}^3}\alpha (y_k)\right)^{\lfrf{n^{1/3}}-1} \\
			&+
			\bar{\ga}^{\left(\la-\frac{1}{2}\right) \lfrf{n^{1/3}}^2}\frac{1-\sqrt{\bar{\ga}}}{2}
			\sum_{k=0}^{\lfrf{n^{1/3}}-1}\E \left[V(Z_{0,k \lfrf{n^{1/3}}^2+1}^{x_0,S^{-n+1}\mathbf{y}})+V(Z_{0,k \lfrf{n^{1/3}}^2+1}^{T_0(y_{-n})x_0,S^{-n+1}\mathbf{y}})\right].
		\end{align*}
		By Lemma \ref{lem:ita}, Assumption \ref{as:drift} and the tower rule, for $0\le k<\lfrf{n^{1/3}}$, we have
		\begin{align*}
			\E \left[V(Z_{0,k \lfrf{n^{1/3}}^2+1}^{x_0,S^{-n+1}\mathbf{y}})+V(Z_{0,k \lfrf{n^{1/3}}^2+1}^{T_0(y_{-n})x_0,S^{-n+1}\mathbf{y}})\right] 
			= &\\
			\E \left(\left[Q(y_{k \lfrf{n^{1/3}}^2-n+1})\ldots Q(y_{-n+1})V\right](x_0)\right) 
			+ &\\
			\E \left(\left[Q(y_{k \lfrf{n^{1/3}}^2-n+1})\ldots Q(y_{-n+1})V\right](T_0(y_{-n})x_0)\right)
			\le&\\
			\left(V(x_0)+\E (V(T_0(y_{-n})x_0))\right)\,\prod_{r=-n+1}^{k \lfrf{n^{1/3}}^2-n+1}
			\ga (y_r)
			+ 2\sum_{r=-n+1}^{k \lfrf{n^{1/3}}^2-n+1}K(y_r)\prod_{j=r+1}^{k \lfrf{n^{1/3}}^2-n+1}
			\ga (y_j) \le&\\
			V(x_0)\left(1+\ga (y_{-n})\right)\,\prod_{r=-n+1}^{k \lfrf{n^{1/3}}^2-n+1}
			\ga (y_r)
			+ 2\sum_{r=-n}^{k \lfrf{n^{1/3}}^2-n+1}K(y_r)\prod_{j=r+1}^{k \lfrf{n^{1/3}}^2-n+1}
			\ga (y_j)
		\end{align*}
		where we have taken into account that $K(\cdot)\ge 1$.
		
		By the Markov inequality and the strong stationarity of $Y_t$, $t\in\Z$, we can write
		\begin{align*}
			\P\left(\mathbf{Y}\in\left\{\mathbf{y}\in\Y^\Z\middle| \sum_{k=0}^{\lfrf{n^{1/3}}-1}\E \left[V(Z_{0,k \lfrf{n^{1/3}}^2+1}^{x_0,S^{-n+1}\mathbf{y}})+V(Z_{0,k \lfrf{n^{1/3}}^2+1}^{T_0(y_{-n})x_0,S^{-n+1}\mathbf{y}})\right]
			\ge 
			\frac{2\bar{\ga}^{-n^{-1/3}}}{1-\sqrt{\bar{\ga}}}
			\right\}\right) \le&\\
			\bar{\ga}^{n^{1/3}}\frac{1-\sqrt{\bar{\ga}}}{2}
			\sum_{k=0}^{\lfrf{n^{1/3}}-1}
			\left\{
			V(x_0)\E\left[\left(1+\ga (Y_{-n})\right)\,\prod_{r=-n+1}^{k \lfrf{n^{1/3}}^2-n+1}
			\ga (Y_r)\right]
			\right.
			+&\\
			\left. 
			2\sum_{r=-n}^{k \lfrf{n^{1/3}}^2-n+1} 
			\E\left[K(Y_r)\prod_{j=r+1}^{k \lfrf{n^{1/3}}^2-n+1}
			\ga (Y_j)\right]
			\right\} 
			\le&\\
			c_1\bar{\ga}^{n^{1/3}}\frac{1-\sqrt{\bar{\ga}}}{2}
			\sum_{k=0}^{\lfrf{n^{1/3}}-1} 
			\left[
			V(x_0)\sqrt{\bar{\ga}}(1+\sqrt{\bar{\ga}})
			\bar{\ga}^{\frac{1}{2}k\lfrf{n^{1/3}}^2}
			+2\sum_{r=0}^{k\lfrf{n^{1/3}}^2+1}\bar{\ga}^{r/2}
			\right]
			\le&\\
			c_1\bar{\ga}^{n^{1/3}}\frac{1-\sqrt{\bar{\ga}}}{2}
			\left[V(x_0)
			\frac{\sqrt{\bar{\ga}}(1+\sqrt{\bar{\ga}})}{1-\bar{\ga}^{\frac{1}{2}\lfrf{n^{1/3}}^2}}
			+
			\frac{2\lfrf{n^{1/3}}}{1-\sqrt{\bar{\ga}}}
			\right]
			\le&\\
			c_1(V(x_0)+\lfrf{n^{1/3}})\bar{\ga}^{n^{1/3}},
		\end{align*}
		where $c_1$ is chosen such that $\E\left(K(Y_0)\prod_{t=1}^{n} \ga (Y_t)\right)\le c_1\bar{\ga}^{\frac{n}{2}}$
		, $n\in\N$ holds.
	
		So, by Lemma \ref{lem:cut} and by our previous considerations, there exist $c_1,c_2,\nu'>0$
		such that
		\begin{align*}
			\P \left(\mathbf{Y}\notin A_n^\la\right)
			&\le 
			\P \left(\mathbf{Y}\notin A_n^\la,\,S^{-n+1}\mathbf{Y}\in B_{n,3}^{\la}\right) 
			+
			\P \left(S^{-n+1}\mathbf{Y}\notin B_{n,3}^{\la}\right)
			\\
			&\le
			c_1(V(x_0)+\lfrf{n^{1/3}})\bar{\ga}^{n^{1/3}}
			+
			c_2 n^{2/3}e^{-\nu' n^{1/3}}.
		\end{align*}
		Clearly, there exists $c>0$ such that, for $\nu = \min (-\log\bar{\ga},\nu')$,
		\begin{equation*}
			c_1(V(x_0)+\lfrf{n^{1/3}})\bar{\ga}^{n^{1/3}}
			+
			c_2 n^{2/3}e^{-\nu' n^{1/3}} \le c n^{2/3}e^{-\nu n^{1/3}}
		\end{equation*}
		holds, which completes the proof.
	\end{proof}	
	
	The next lemma is a crucial ingredient of the proofs of both Theorem \ref{thm:TV} and \ref{thm:LLN}.
	
	\begin{lemma}\label{lem:dtvLp}
	Under Assumption \ref{as:myas}, there exists $1\le p<\infty$, such that
	\begin{equation*}
			\sum_{n=0}^\infty \left\Vert\dtv (\mu_{n}(\mathbf{Y},\cdot),\mu_{n+1}(\mathbf{Y},\cdot))\right\Vert_p <\infty.
	\end{equation*}		
	\end{lemma}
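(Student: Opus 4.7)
The plan is to choose any $1/2<\lambda<1$ (say $\lambda=3/4$) and any $1\le p<\infty$, split the expectation according to the event $A_{n}^{\lambda}$ introduced in Lemma \ref{lem:dtv}, and then apply Lemmas \ref{lem:dtv} and \ref{lem:max} together with the trivial bound $\dtv\le 2$.

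More concretely, I would write
\begin{equation*}
\|\dtv(\mu_{n}(\mathbf{Y},\cdot),\mu_{n+1}(\mathbf{Y},\cdot))\|_{p}
\le \|\dtv(\mu_{n}(\mathbf{Y},\cdot),\mu_{n+1}(\mathbf{Y},\cdot))\ind_{A_{n}^{\lambda}}\|_{p}
+ 2\,\P(\mathbf{Y}\notin A_{n}^{\lambda})^{1/p}.
\end{equation*}
By Lemma \ref{lem:dtv} the second term is bounded by $2(cn^{2/3}e^{-\nu n^{1/3}})^{1/p}$, which is obviously summable in $n$. On $A_{n}^{\lambda}$ the definition of $A_{n}^{\lambda}$ gives the pointwise bound
\begin{equation*}
\dtv(\mu_{n},\mu_{n+1})
< 2\Bigl(\max_{0\le k<\lfrf{n^{1/3}}^{3}}\alpha(Y_{k})\Bigr)^{\lfrf{n^{1/3}}-1}
+ 2\bar{\ga}^{(\lambda-1/2)\lfrf{n^{1/3}}^{2}-n^{1/3}},
\end{equation*}
so the first term is dominated by
\begin{equation*}
2\bigl\|\max_{0\le k<\lfrf{n^{1/3}}^{3}}\alpha(Y_{k})^{\lfrf{n^{1/3}}-1}\bigr\|_{p}
+ 2\bar{\ga}^{(\lambda-1/2)\lfrf{n^{1/3}}^{2}-n^{1/3}}.
\end{equation*}

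Now I would sum over $n$. The deterministic term $\bar{\ga}^{(\lambda-1/2)\lfrf{n^{1/3}}^{2}-n^{1/3}}$ decays like $\bar{\ga}^{c n^{2/3}}$ with $\bar{\ga}<1$, hence is summable. For the random term, I would invoke Lemma \ref{lem:max} with $M=3$, which yields
\begin{equation*}
\sum_{n=1}^{\infty}\bigl\|\max_{0\le k<\lfrf{n^{1/3}}^{3}}\alpha(Y_{k})^{\lfrf{n^{1/3}}-1}\bigr\|_{p}<\infty,
\end{equation*}
exactly under Assumption \ref{as:myas}. Combining the three summable pieces gives the claim.

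The whole argument is almost bookkeeping once Lemmas \ref{lem:dtv} and \ref{lem:max} are in place; the only point requiring a bit of care is making sure the Hölder/union-bound splitting at the very beginning is applied with the correct exponent $1/p$ on the measure of the bad set, and that $p$ can be chosen arbitrarily in $[1,\infty)$ (which is what Lemma \ref{lem:max} provides, so the statement holds for every such $p$, not just some specific one). No real obstacle is expected — the heavy lifting has already been done in Lemmas \ref{lem:central}, \ref{lem:dtv} and \ref{lem:max}.
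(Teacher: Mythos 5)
Your proof is correct and follows essentially the same route as the paper: split on the event $A_n^\lambda$ from Lemma \ref{lem:dtv}, use the pointwise bound on the good set, bound the bad-set contribution by its probability, and invoke Lemma \ref{lem:max} with $M=3$ for summability. Your use of $\P(\mathbf{Y}\notin A_n^\lambda)^{1/p}$ (rather than $\P(\mathbf{Y}\notin A_n^\lambda)$, as the paper writes) is in fact the correct exponent, though it makes no difference to summability; and your observation that the statement holds for every $p\in[1,\infty)$, not merely some $p$, is also accurate.
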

	\begin{proof}	
		According to Lemma \ref{lem:dtv}, there exist $c,\nu>0$ such that
		$\P \left(\mathbf{Y}\in A_n^\la\right)\ge 1-cn^{2/3}e^{-\nu n^{1/3}}$. So, we
		obtain the following upper bound for the general term
		\begin{align}\label{eq:dtvest}
		\begin{split}
		&\left\Vert\dtv (\mu_{n}(\mathbf{Y},\cdot),\mu_{n+1}(\mathbf{Y},\cdot))\right\Vert_p
		\le
		\left\Vert\dtv (\mu_{n}(\mathbf{Y},\cdot),\mu_{n+1}(\mathbf{Y},\cdot))\ind_{\mathbf{Y}\in A_n^\la}\right\Vert_p+ 2\,\P (\mathbf{Y}\notin A_n^\la) \\
		&\le
		2
		\left[
		\left\Vert\max_{0\le k<\lfrf{n^{1/3}}^3} \alpha (Y_k)^{\lfrf{n^{1/3}}-1}\right\Vert_p
		+\bar{\ga}^{\left(\la-\frac{1}{2}\right)\lfrf{n^{1/3}}^2-n^{1/3}}
		+cn^{2/3} e^{-\nu n^{1/3}}
		\right],\quad n\in\N^+
		\end{split}
		\end{align}
		which, by Lemma \ref{lem:max}, has a finite sum.
	\end{proof}
	
	\begin{corollary}\label{cor:dtv}
		For $\law{Y}-\Pas$ $\mathbf{y}$, $\mu_{n}(\mathbf{y},\cdot)$, $n\in\N$ is 
		convergent in the metric space $(\M_1, \dtv)$.{}
Let us denote this
		pointwise limit by $\mu_{\ast} (\mathbf{y},\cdot)$. $\mu_{\ast}$ is a probability kernel.
	\end{corollary}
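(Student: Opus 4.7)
The key input is Lemma \ref{lem:dtvLp}, which gives $\sum_{n=0}^{\infty}\|\dtv(\mu_n(\mathbf{Y},\cdot),\mu_{n+1}(\mathbf{Y},\cdot))\|_p<\infty$ for some $1\le p<\infty$. First I would deduce from this that the random sum
\[
\Sigma(\mathbf{Y}):=\sum_{n=0}^{\infty}\dtv(\mu_n(\mathbf{Y},\cdot),\mu_{n+1}(\mathbf{Y},\cdot))
\]
belongs to $L^p$ (by monotone convergence and Minkowski) and therefore is finite $\law{Y}$-a.s. Fix any $\mathbf{y}$ in this full-measure set; then $(\mu_n(\mathbf{y},\cdot))_{n\in\N}$ is a Cauchy sequence in the metric $\dtv$, since $\dtv(\mu_n(\mathbf{y},\cdot),\mu_m(\mathbf{y},\cdot))\le\sum_{k=n}^{m-1}\dtv(\mu_k(\mathbf{y},\cdot),\mu_{k+1}(\mathbf{y},\cdot))$ is the tail of a convergent series.

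Next I would invoke completeness. The space of finite signed measures on $(\X,\B)$ equipped with the total variation norm is a Banach space, and $\M_1$ is the closed subset consisting of nonnegative measures of mass $1$. Consequently, for each such $\mathbf{y}$ there exists a unique $\mu_{\ast}(\mathbf{y},\cdot)\in\M_1$ with $\dtv(\mu_n(\mathbf{y},\cdot),\mu_{\ast}(\mathbf{y},\cdot))\to 0$. On the complementary null set, I would simply set $\mu_{\ast}(\mathbf{y},\cdot):=\delta_{x_0}$; this redefinition does not affect any subsequent $\law{Y}$-a.s. statement.

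It then remains to verify that $\mu_{\ast}$ is a probability kernel, i.e.\ that $\mathbf{y}\mapsto\mu_{\ast}(\mathbf{y},A)$ is $\A^{\otimes\Z}$-measurable for every $A\in\B$. For each $n\in\N$ the map $\mathbf{y}\mapsto \mu_n(\mathbf{y},A)=\P(Z_{0,n}^{x_0,S^{-n+1}\mathbf{y}}\in A)$ is measurable: this follows by induction on $n$ from the measurable dependence of $T_t(y,x,\cdot)$ on $(y,x)$ provided by Lemma \ref{lem:T} and from Fubini applied to the independence of the driving noise $(U_t,\eps_t)$. Since $\mu_{\ast}(\mathbf{y},A)=\lim_{n\to\infty}\mu_n(\mathbf{y},A)$ pointwise on the full-measure set (total variation convergence implies setwise convergence) and equals $\ind_A(x_0)$ off this set, the limit is measurable as a pointwise limit of measurable functions. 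Together with the fact that each $\mu_{\ast}(\mathbf{y},\cdot)$ is a probability measure, this shows that $\mu_{\ast}$ is a probability kernel.

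The main obstacle is really only the bookkeeping: the almost sure set depends on $\mathbf{y}$ globally and not on the set $A$, so I need to be careful that a single full-measure event handles convergence simultaneously for all Borel sets $A$. Fortunately, this is precisely what total variation convergence delivers, since $\dtv(\mu_n(\mathbf{y},\cdot),\mu_{\ast}(\mathbf{y},\cdot))\to 0$ implies $\mu_n(\mathbf{y},A)\to\mu_{\ast}(\mathbf{y},A)$ uniformly in $A\in\B$; so the measurability verification above can be carried out on one and the same full-measure event.
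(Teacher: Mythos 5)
Your proposal is correct and follows essentially the same route as the paper: the summability of total variation increments (from Lemma \ref{lem:dtvLp}) gives $\law{Y}$-a.s.\ finiteness of the random series, hence a.s.\ Cauchyness of $(\mu_n(\mathbf{y},\cdot))_n$, and completeness of $(\M_1,\dtv)$ supplies the limit. The paper's proof is terser and works directly with the $L^1$ (expectation) version of the summability, then dismisses the kernel-measurability question with ``not difficult to check''; you fill in that gap via measurability of $\mathbf{y}\mapsto\mu_n(\mathbf{y},A)$ (from the measurable construction in Lemma \ref{lem:T}), the fact that TV convergence gives setwise convergence on a single full-measure event, and a harmless redefinition on the null set — all standard and appropriate.
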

	\begin{proof}
	We notice that the sequence of expected
		total variation distances has a finite sum, that is
		\begin{equation}
		\sum_{n=0}^\infty \E\left(\dtv (\mu_{n}(\mathbf{Y},\cdot),\mu_{n+1}(\mathbf{Y},\cdot))\right)<\infty
		\end{equation}
		which implies that $\mu_{n}(\mathbf{y},\cdot)$, $n\in\N$ is a Cauchy sequence ($P$-a.s.)
		hence it converges. It is not difficult
		to check that $\mu_{\ast}$ is indeed a probability kernel.
	\end{proof}

	According to the next remark, the $\law{Y}$-$\Pas$ existing pointwise limit 
	$\mu_{\ast}(\mathbf{y},\cdot)$ is independent of ${x_0}$ which will lead later to the 
	conclusion that the limit law $\mu_{\ast}$ in Theorem \ref{thm:TV} is independent of the initial value, as well.
	
	\begin{remark}\label{rem:icd}
		{\rm Signaling the dependence of $\mu_n (\mathbf{y},\cdot)$ on $x_0$, we write $\mu_n^{x_0}(\mathbf{y},\cdot)$. Applying the arguments presented in this section mutadis mutandis, 
		we obtain 
		\begin{align*}
	\frac{1}{2}\left\Vert\dtv \left(\mu_n^{x_0}(\mathbf{Y},\cdot),\mu_n^{x_0'}(\mathbf{Y},\cdot) \right)\right\Vert_p
			&\le
			\left\Vert\max_{0\le k<\lfrf{n^{1/3}}^3} \alpha (Y_k)^{\lfrf{n^{1/3}}-1}\right\Vert_p 
			\\
			&+\bar{\ga}^{\left(\la-\frac{1}{2}\right)\lfrf{n^{1/3}}^2-n^{1/3}}
			+c(x_0,x_0') n^{2/3} e^{-\nu n^{1/3}},\,\, n\in\N^+
		\end{align*}
	and thus by the triangle inequality, we get
	\begin{align*}
		\left\Vert\dtv \left(\mu_{\ast}^{x_0}(\mathbf{Y},\cdot),\mu_{\ast}^{x_0'}(\mathbf{Y},\cdot) \right)\right\Vert_p
		&\le
		\left\Vert\dtv \left(\mu_n^{x_0}(\mathbf{Y},\cdot),\mu_{\ast}^{x_0}(\mathbf{Y},\cdot) \right)\right\Vert_p
		+
		\left\Vert\dtv \left(\mu_{\ast}^{x_0'}(\mathbf{Y},\cdot),\mu_n^{x_0'}(\mathbf{Y},\cdot) \right)\right\Vert_p
		\\
		&+
		\left\Vert\dtv \left(\mu_n^{x_0}(\mathbf{Y},\cdot),\mu_n^{x_0'}(\mathbf{Y},\cdot) \right)\right\Vert_p,
	\end{align*}
	where the right-hand side tends to zero as $n\to\infty$ showing that
	for $\law{Y}-\Pas$ $\mathbf{y}\in\Y^\Z$, $\mu_{\ast}^{x_0}(\mathbf{y},\cdot) = \mu_{\ast}^{x_0'}(\mathbf{y},\cdot)$ holds.}	
	\end{remark}

	\subsection{Ergodicity of $\Phi (Z_t^\mathbf{y})$}\label{sec:proofs:Zt}
	
	Let $N\ge 1$ be an arbitrary natural number and $\mathbf{y}\in\Y^\Z$. Let us define the truncated process
	\begin{equation}\label{eq:Wproc}
	W_t(\mathbf{y}):=\left[
	\Phi \left(Z_{0,t}^{x_0,\mathbf{y}}\right)
	-
	\E \left(\Phi \left(Z_{0,t}^{x_0,\mathbf{y}}\right)\right)
	\right]\ind_{t\le\lfrf{N^{1/6}}^6},\, t\in\N. 
	\end{equation}

We will use the results of Section \ref{lm}.	
For $p\ge 1$, we introduce the quantities $M_p (W)=\sup_{t\in\N}\Vert W_t\Vert_p$ and
	\begin{equation*}
	\Gamma_p(W) =\sum_{\tau=1}^\infty \ga_p (W,\tau),
	\end{equation*} 
	where
		$\ga_p (W,\tau) = \sup_{t\ge\tau}\Vert W_t-\E \left(W_t\mid \G_{t-\tau}^+\right)\Vert_p$, $\tau\ge 1$.
	If $\tau>\lfrf{N^{1/6}}^6$, then $\ga_p (W,\tau)=0$ thus $\Gamma_p(W)$ is finite which means 
	that $W_t$, $t\in\N$ is L-mixing of order $p$ with respect to $(\G_t,\G_t^+)$, $t\in\N$. 
	According to Lemma \ref{inek}, for $p\ge 2$, we have the estimate
	\begin{equation}\label{eq:mixing}
		\left\Vert \frac{1}{N}\sum_{t=1}^N W_t \right\Vert_p\le 
		C_p M_p^{1/2}(W)\sqrt{\frac{\Gamma_p (W)}{N}},
	\end{equation}
	where $C_p$ is a constant that depends neither on $N$ or $W$.

	Let us consider the estimate 
	\begin{equation*}
	\Gamma_p(W) =\sum_{\tau =1}^\infty \ga_p (W,\tau)\le 2\sqrt{N}\Vert \Phi\Vert_\infty+\sum_{\tau=\lfrf{N^{1/6}}^3+1}^{\lfrf{N^{1/6}}^6} \ga_p (W,\tau)
	\end{equation*}
	and for $s,t\in\N$, $t\ge s$ introduce the auxiliary process
	\begin{equation*}
	\widetilde{W}_{s,t}:=\left[
	\Phi \left(Z_{s,t}^{x_0,\mathbf{y}}\right)
	-
	\E \left(\Phi \left(Z_{0,t}^{x_0,\mathbf{y}}\right)\right)
	\right]\ind_{t\le\lfrf{N^{1/6}}^6}. 	
	\end{equation*}
	Note that, $W_{s,t}$ is measurable with respect to $\G_s^+$ moreover
	\begin{equation}\label{eq:diff}
	W_t-\widetilde{W}_{s,t} = \Phi \left(Z_{0,t}^{x_0,\mathbf{y}}\right)-\Phi \left(Z_{s,t}^{x_0,\mathbf{y}}\right)
	\end{equation}
	which will be important later.	
	
	For $\lfrf{N^{1/6}}^3<\tau\le \lfrf{N^{1/6}}^6$, there exists 
	$q,r\in\{0,1,\ldots,\lfrf{N^{1/6}}^3\}$ such that
	$\tau = q\lfrf{N^{1/6}}^3+r$, where $q\ge 1$. By our previous observation,
	$\widetilde{W}_{t-\lfrf{N^{1/6}}^3,t}$ is measurable with respect to $\G_{t-\lfrf{N^{1/6}}^3}^+$ moreover $\G_{t-\lfrf{N^{1/6}}^3}^+\subseteq \G_{t-\tau}^+$
	because $q\ge 1$ and thus $\widetilde{W}_{t-\lfrf{N^{1/6}}^3,t}$ is $\G_{t-\tau}^+$-measurable.
	
	By Lemma \ref{fyffes} and \eqref{eq:diff}, we can write
	\begin{align*}
		\ga_p (W,\tau) &= \max_{\tau\le t\le \lfrf{N^{1/6}}^6}
		\left\Vert 
		W_t-\E \left(
		W_t\mid \G_{t-\tau}^+
		\right)
		\right\Vert_p 
		\le
		\max_{\tau\le t\le \lfrf{N^{1/6}}^6}
		2\left\Vert 
		W_t-\widetilde{W}_{t-\lfrf{N^{1/6}}^3,t}
		\right\Vert_p \\
		&\le
		\max_{\lfrf{N^{1/6}}^3<t\le \lfrf{N^{1/6}}^6}
		2\left\Vert 
		W_t-\widetilde{W}_{t-\lfrf{N^{1/6}}^3,t}
		\right\Vert_p \\
		&=
		\max_{0<t\le \lfrf{N^{1/6}}^6-\lfrf{N^{1/6}}^3}
		2\left\Vert 
		W_{t+\lfrf{N^{1/6}}^3}-\widetilde{W}_{t,t+\lfrf{N^{1/6}}^3}
		\right\Vert_p \\
		&=
		\max_{0<t\le \lfrf{N^{1/6}}^6-\lfrf{N^{1/6}}^3}
		2\left\Vert 
		\Phi(Z_{0,t+\lfrf{N^{1/6}}^3}^{x_0,\mathbf{y}})-\Phi (Z_{t,t+\lfrf{N^{1/6}}^3}^{x_0,\mathbf{y}})
		\right\Vert_p \\
		&\le
		4\Vert\Phi\Vert_\infty
		\max_{0<t\le \lfrf{N^{1/6}}^6-\lfrf{N^{1/6}}^3}
		\P \left(
		Z_{0,t+\lfrf{N^{1/6}}^3}^{x_0,\mathbf{y}}
		\ne
		Z_{t,t+\lfrf{N^{1/6}}^3}^{x_0,\mathbf{y}}
		\right).
	\end{align*}
	
	We substitute this back into \eqref{eq:mixing} and we obtain
	the following upper bound 
	\begin{equation}\label{eq:bound}
		\left\Vert \frac{1}{N}\sum_{t=1}^N W_t \right\Vert_p\le 
		2 C_p \Vert\Phi\Vert_\infty \left(
		\frac{1}{\sqrt{N}}+
		2\max_{0<t\le \lfrf{N^{1/6}}^6-\lfrf{N^{1/6}}^3}
		\P \left(
		Z_{0,t+\lfrf{N^{1/6}}^3}^{x_0,\mathbf{y}}
		\ne
		Z_{t,t+\lfrf{N^{1/6}}^3}^{x_0,\mathbf{y}}
		\right)
		\right)^{1/2}.
	\end{equation}
	
	The content of the next lemma is that there exist ``large'' sets of the environment for which good enough couplings 
	occur. 
	\begin{lemma}\label{lem:WtypicalY}
	For $N\in\N^+$, $1/2<\la<1$ and $0<t\le \lfrf{N^{1/6}}^6-\lfrf{N^{1/6}}^3$, let us define the following sets.	
	\begin{equation*}
	C_{N,t}^\la = \left\{
	\mathbf{y}\in\Y^\Z\middle|	
	\P \left(
	Z_{0,\lfrf{N^{1/6}}^3}^{Z''_t,S^t\mathbf{y}}
	\ne
	Z_{0,\lfrf{N^{1/6}}^3}^{x_0,S^t\mathbf{y}}
	\right)
	<
	\left(\max_{0\le k<\lfrf{N^{1/6}}^3}\alpha (y_{k+t})\right)^{\lfrf{N^{1/6}}-1} +
	\bar{\ga}^{\left(\la-\frac{1}{2}\right)\lfrf{N^{1/6}}^2-N^{1/6}}
	\right\}
	\end{equation*} 	
	Then there exist $c,\nu>0$ such that
	\begin{equation*}
		\P \left( \mathbf{Y}\in C_{N}^\la\right)\ge 1-c N^{7/6} e^{-\nu N^{1/6}},
		\text{ where }
		C_{N}^\la=\bigcap_{t=1}^{\lfrf{N^{1/6}}^6-\lfrf{N^{1/6}}^3} C_{N,t}^\la.
	\end{equation*}	
	\end{lemma}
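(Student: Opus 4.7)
The plan is to adapt the argument from Lemma \ref{lem:dtv} to a shifted time scale, where ``$n$'' there corresponds to $\lfrf{N^{1/6}}$ here, and then take a union bound over $t$. The crucial distributional identity used throughout is
\begin{equation*}
\P\left(Z_{0,t+\lfrf{N^{1/6}}^3}^{x_0,\mathbf{y}}\ne Z_{t,t+\lfrf{N^{1/6}}^3}^{x_0,\mathbf{y}}\right)
= \P\left(Z_{0,\lfrf{N^{1/6}}^3}^{Z''_t, S^t\mathbf{y}}\ne Z_{0,\lfrf{N^{1/6}}^3}^{x_0, S^t\mathbf{y}}\right),
\end{equation*}
where $Z''_t:=Z_{0,t}^{x_0,\mathbf{y}}$ is $\G_t$-measurable. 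This identity follows from the semigroup property of the random maps $T_k$ and the fact that $(T_t)_{t\in\Z}$ form an independent family (Lemma \ref{lem:T}), so after time-shifting one may reindex the driving noises without changing the law.

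The steps I would carry out are the following. First, for each fixed $t$, apply Lemma \ref{lem:central} at scale $n=\lfrf{N^{1/6}}$ (hence $n^3=\lfrf{N^{1/6}}^3$) with initial conditions $P_1=Z''_t$, $P_2=x_0$ and frozen environment $S^t\mathbf{y}$; this requires $S^t\mathbf{y}\in B_{\lfrf{N^{1/6}}^3,3}^\la$, and it yields exactly the bound
\begin{equation*}
\left(\max_{0\le k<\lfrf{N^{1/6}}^3}\alpha(y_{k+t})\right)^{\lfrf{N^{1/6}}-1}
+\bar{\ga}^{(\la-1/2)\lfrf{N^{1/6}}^2}\frac{1-\sqrt{\bar{\ga}}}{2}
\sum_{k=0}^{\lfrf{N^{1/6}}-1}\E\!\left[V(Z_{0,k\lfrf{N^{1/6}}^2+1}^{Z''_t,S^t\mathbf{y}})+V(Z_{0,k\lfrf{N^{1/6}}^2+1}^{x_0,S^t\mathbf{y}})\right].
\end{equation*}
Second, to replace the $V$-sum by $\bar{\ga}^{-N^{1/6}}$ (which turns the geometric prefactor into $\bar{\ga}^{(\la-1/2)\lfrf{N^{1/6}}^2-N^{1/6}}$), I would run the same Markov-inequality computation as in Lemma \ref{lem:dtv}: iterating the drift condition via Lemma \ref{lem:ita} and using Assumption \ref{as:LT} in the form $\E[K(Y_0)\prod_{j=1}^n\ga(Y_j)]\le c_1\bar{\ga}^{n/2}$, one shows that the exceptional environment set has $\P$-measure at most $c_1(V(x_0)+\lfrf{N^{1/6}})\bar{\ga}^{N^{1/6}}$ per $t$. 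Third, ensure simultaneously for all $t\in\{1,\ldots,\lfrf{N^{1/6}}^6-\lfrf{N^{1/6}}^3\}$ that $S^t\mathbf{Y}\in B_{\lfrf{N^{1/6}}^3,3}^\la$, for which Remark \ref{rem:six} reduces the question to the event $\{S^t\mathbf{Y}\in B_{\lfrf{N^{1/6}}^6,6}^\la,\ t=0,\ldots,\lfrf{N^{1/6}}-1\}$; the second estimate of Lemma \ref{lem:cut} (applied with $M=6$, $N\to\lfrf{N^{1/6}}^6$, $n=\lfrf{N^{1/6}}$) bounds its complement by $c\lfrf{N^{1/6}}\cdot\lfrf{N^{1/6}}^5 e^{-\nu\lfrf{N^{1/6}}}\le c N e^{-\nu N^{1/6}}$. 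Finally, a union bound over the at most $N$ values of $t$ combines the two bad-event estimates into the claimed $c N^{7/6}e^{-\nu N^{1/6}}$, absorbing polynomial losses into the constant $c$ and reducing $\nu$ slightly if needed.

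The main obstacle is careful bookkeeping: one must align the index ranges so that Lemma \ref{lem:central}'s hypothesis on the frozen environment (contractivity on consecutive blocks of length $\lfrf{N^{1/6}}$ inside $[0,\lfrf{N^{1/6}}^3]$) translates, via Remark \ref{rem:six}, into a single large-probability event for the unshifted $\mathbf{Y}$ that is uniform in $t$; and one must ensure that the polynomial factor coming from the Markov-inequality step combined with the union bound over $t\le\lfrf{N^{1/6}}^6\le N$ does not exceed $N^{7/6}$. Once the reduction to a single-$t$ estimate is stated cleanly, all remaining computations are essentially identical to those in Lemma \ref{lem:dtv}.
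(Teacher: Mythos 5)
Your plan matches the paper's proof essentially step for step: the time-shift identity, the application of Lemma \ref{lem:central} at scale $n=\lfrf{N^{1/6}}$, the Markov-inequality control of the $V$-sum via Lemma \ref{lem:ita} and $\E[K(Y_0)\prod_{j=1}^n\ga(Y_j)]\le c_1\bar{\ga}^{n/2}$, the use of Remark \ref{rem:six} and Lemma \ref{lem:cut} to control the environment event uniformly in $t$, and the final union bound over $t$.

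One point needs tightening. You define $Z''_t:=Z_{0,t}^{x_0,\mathbf{y}}$, which is $\G_t$-measurable, and then plug it as $P_1$ into Lemma \ref{lem:central}; but that lemma requires $P_1$ to be $\G_0$-measurable. You do gesture at the fix ("reindex the driving noises"), but you should carry it out: take $Z''_t:=Z_{-t,0}^{x_0,S^t\mathbf{y}}$, which is built from the maps $T_{-t+1},\ldots,T_0$ and hence is $\G_0$-measurable, while the iterated process $Z_{0,s}^{\cdot,S^t\mathbf{y}}$ uses $T_1,\ldots,T_s$, so there is no overlap. Since the $T_j$ are i.i.d., the pair $\bigl(Z_{0,m}^{Z''_t,S^t\mathbf{y}},Z_{0,m}^{x_0,S^t\mathbf{y}}\bigr)$ with this definition has the same joint law as $\bigl(Z_{t,t+m}^{Z',\mathbf{y}},Z_{t,t+m}^{x_0,\mathbf{y}}\bigr)$ with $Z'=Z_{0,t}^{x_0,\mathbf{y}}$, which gives the distributional identity and a legitimate $\G_0$-measurable initial condition in one stroke. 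With your literal definition the initial condition and the iteration share random maps, and the identity of laws fails in general. Once this substitution is made, the rest of your outline goes through exactly as in the paper.
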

	\begin{proof}
		Let $N\in\N^+$ and $0<t\le \lfrf{N^{1/6}}^6-\lfrf{N^{1/6}}^3$ be arbitrary and fixed.
		We have the following identities
		\begin{align*}
		\P \left(
		Z_{0,t+\lfrf{N^{1/6}}^3}^{x_0,\mathbf{y}}
		\ne
		Z_{t,t+\lfrf{N^{1/6}}^3}^{x_0,\mathbf{y}}
		\right) 
		=
		\P \left(
		Z_{t,t+\lfrf{N^{1/6}}^3}^{Z',\mathbf{y}}
		\ne
		Z_{t,t+\lfrf{N^{1/6}}^3}^{x_0,\mathbf{y}}
		\right) 
		=
		\P \left(
		Z_{0,\lfrf{N^{1/6}}^3}^{Z''_t,S^t\mathbf{y}}
		\ne
		Z_{0,\lfrf{N^{1/6}}^3}^{x_0,S^t\mathbf{y}}
		\right),
		\end{align*}
		where $Z'=Z_{0,t}^{x_0,\mathbf{y}}$ and $Z''_t=Z_{-t,0}^{x_0,S^t\mathbf{y}}$.
		
		If $\mathbf{y}\in\Y^\Z$ such that $S^t\mathbf{y}\in B_{\lfrf{N^{1/6}}^6,6}^\la$, 
		$t=0,1,\ldots,\lfrf{N^{1/6}}-1$, then by Remark \ref{rem:six},
		$S^t\mathbf{y}\in B_{\lfrf{N^{1/6}}^3,3}^\la$, $0<t\le\lfrf{N^{1/6}}^6-\lfrf{N^{1/6}}^3$, furthermore
		$Z''_t$ is $\G_0$-measurable hence we can apply Lemma \ref{lem:central} thus
		we obtain
		\begin{align*}
		&\P \left(
		Z_{0,\lfrf{N^{1/6}}^3}^{Z''_t,S^t\mathbf{y}}
		\ne
		Z_{0,\lfrf{N^{1/6}}^3}^{x_0,S^t\mathbf{y}}
		\right)
		\le 
		\left(\max_{0\le k<\lfrf{N^{1/6}}^3}\alpha (y_{k+t})\right)^{\lfrf{N^{1/6}}-1} \\
		&+
		\bar{\ga}^{\left(\la-\frac{1}{2}\right)\lfrf{N^{1/6}}^2}\frac{1-\sqrt{\bar{\ga}}}{2}
		\sum_{k=0}^{\lfrf{N^{1/6}}-1}
		\E\left[V(Z_{0,k\lfrf{N^{1/6}}^2+1}^{Z''_t,S^t\mathbf{y}})+V(Z_{0,k\lfrf{N^{1/6}}^2+1}^{x_0,S^t\mathbf{y}})\right].
		\end{align*}
		
			By Lemma \ref{lem:ita}, Assumption \ref{as:drift} and the tower rule, for $0\le k<\lfrf{N^{1/6}}$, we have
		\begin{align*}
		\E\left[V(Z_{0,k\lfrf{N^{1/6}}^2+1}^{Z''_t,S^t\mathbf{y}})+V(Z_{0,k\lfrf{N^{1/6}}^2+1}^{x_0,S^t\mathbf{y}})\right]
		= &\\
			\E \left(\left[Q(y_{k\lfrf{N^{1/6}}^2+t})\ldots Q(y_{t})V\right](Z_{-t,0}^{x_0,S^t\mathbf{y}})\right) 
		+
		\E \left(\left[Q(y_{k\lfrf{N^{1/6}}^2+t})\ldots Q(y_{t})V\right](x_0)\right)
		=&\\
		\E \left(\left[Q(y_{k\lfrf{N^{1/6}}^2+t})\ldots Q(y_{0})V\right](x_0)\right) 
		+
		\E \left(\left[Q(y_{k\lfrf{N^{1/6}}^2+t})\ldots Q(y_{t})V\right](x_0)\right)		
		\le&\\
		V(x_0)\left[ 
		\prod_{r=0}^{k\lfrf{N^{1/6}}^2+t}
		\ga (y_r)
		+
		\prod_{r=t}^{k\lfrf{N^{1/6}}^2+t}
		\ga (y_r)
		\right]
		+&\\
		\sum_{r=t}^{k\lfrf{N^{1/6}}^2+t}K(y_r)\prod_{j=r+1}^{k\lfrf{N^{1/6}}^2+t}
		\ga (y_j) 
		+
		\sum_{r=0}^{k\lfrf{N^{1/6}}^2+t}K(y_r)\prod_{j=r+1}^{k\lfrf{N^{1/6}}^2+t}
		\ga (y_j) 
		\le&\\
		V(x_0)\left[ 
		\prod_{r=0}^{k\lfrf{N^{1/6}}^2+t}
		\ga (y_r)
		+
		\prod_{r=t}^{k\lfrf{N^{1/6}}^2+t}
		\ga (y_r)
		\right]
		+
		2
		\sum_{r=0}^{k\lfrf{N^{1/6}}^2+t}K(y_r)\prod_{j=r+1}^{k\lfrf{N^{1/6}}^2+t}
		\ga (y_j).
		\end{align*}
		
	By the Markov inequality and the strong stationarity of $Y_t$, $t\in\Z$, we can write
	\begin{align*}
	\P\left(\mathbf{Y}\in\left\{\mathbf{y}\in\Y^\Z\middle| \sum_{k=0}^{\lfrf{N^{1/6}}-1}
	\E\left[V(Z_{0,k\lfrf{N^{1/6}}^2+1}^{Z''_t,S^t\mathbf{y}})+V(Z_{0,k\lfrf{N^{1/6}}^2+1}^{x_0,S^t\mathbf{y}})\right]
	\ge 
	\frac{2\bar{\ga}^{-N^{-1/6}}}{1-\sqrt{\bar{\ga}}}
	\right\}\right) 
	\le&\\
	\bar{\ga}^{N^{1/6}}\frac{1-\sqrt{\bar{\ga}}}{2}
	\sum_{k=0}^{\lfrf{N^{1/6}}-1}
	\left\{
	V(x_0)\E\left[ 
	\prod_{r=0}^{k\lfrf{N^{1/6}}^2+t}
	\ga (Y_r)
	+
	\prod_{r=t}^{k\lfrf{N^{1/6}}^2+t}
	\ga (Y_r)
	\right]
	\right.
	+&\\
	\left. 2\sum_{r=0}^{k\lfrf{N^{1/6}}^2+t}
	\E\left[
	K(Y_r)\prod_{j=r+1}^{k\lfrf{N^{1/6}}^2+t}
	\ga (Y_j)
	\right]
	\right\} 
	\le&\\
	c_1\bar{\ga}^{N^{1/6}}\frac{1-\sqrt{\bar{\ga}}}{2}
	\sum_{k=0}^{\lfrf{N^{1/6}}-1}
	\left[
	V(x_0)\sqrt{\bar{\ga}}(1+\bar{\ga}^{t/2})
	\bar{\ga}^{\frac{1}{2}k\lfrf{N^{1/6}}^2}
	+2\sum_{r=0}^{k\lfrf{N^{1/6}}^2+t}\bar{\ga}^{r/2}
	\right]
	\le&\\
	c_1\bar{\ga}^{N^{1/6}}\frac{1-\sqrt{\bar{\ga}}}{2}
	\left[
	V(x_0)\frac{\sqrt{\bar{\ga}}(1+\bar{\ga}^{t/2})}{1-\bar{\ga}^{\frac{1}{2}\lfrf{N^{1/6}}^2}}
		+
	\frac{2\lfrf{N^{1/6}}}{1-\sqrt{\bar{\ga}}}
	\right]\le&\\
	c_1(V(x_0)+\lfrf{N^{1/6}})\bar{\ga}^{N^{1/6}},
	\end{align*}
	where $c_1$ is chosen such that $\E\left(K(Y_0)\prod_{t=1}^{n} \ga (Y_t)\right)\le c_1\bar{\ga}^{\frac{n}{2}}$, $n\in\N$ holds.
	
	So, by Lemma \ref{lem:cut} and our previous considerations, there exists $c_1,c_2,\nu'>0$
	such that
	\begin{align*}
	\P \left(\bigcup_{t=1}^{\lfrf{N^{1/6}}^6-\lfrf{N^{1/6}}^3}\mathbf{Y}\notin C_{n,t}^\la\right)
	&\le
	\sum_{t=1}^{\lfrf{N^{1/6}}^6-\lfrf{N^{1/6}}^3} 
	\P \left(\mathbf{Y}\notin C_{n,t}^\la,\,
	\bigcap_{s=r}^{\lfrf{N^{1/6}}-1}S^r\mathbf{y}\in B_{\lfrf{N^{1/6}}^6,6}^\la
	\right) 
	\\
	&+
	\P \left(\bigcup_{r=0}^{\lfrf{N^{1/6}}-1}S^r\mathbf{y}\notin B_{\lfrf{N^{1/6}}^6,6}^\la\right)
	\\
	&\le
	c_1N(V(x_0)+\lfrf{N^{1/6}})\bar{\ga}^{N^{1/6}}
	+
	c_2 N e^{-\nu' N^{1/6}}.
	\end{align*}
	Clearly, there exists $c>0$ such that, for which $\nu = \min (-\log\bar{\ga},\nu')$,
	\begin{equation*}
	c_1N(V(x_0)+\lfrf{N^{1/6}})\bar{\ga}^{N^{1/6}}
	+
	c_2 N e^{-\nu' N^{1/6}} \le c N^{7/6} e^{-\nu N^{1/6}}
	\end{equation*}
	holds which completes the proof.
	\end{proof}

	Finally, we arrive at the following important result which will play a central role in the proof of Theorem \ref{thm:LLN}.
	Combining our estimates so far, we can bound the $L_{p}$-norm of the functional averages.
	
	\begin{lemma}\label{lem:central2}
		There exists $\tilde{c}(p,\bar{\ga},\la)>0$ depending only on
		$p$, $\bar{\ga}$ and $\la$ such that
		\begin{equation*}
		\E^{1/p}\left[
		\left\Vert 
		\frac{1}{N}\sum_{t=1}^{N} W_t(\mathbf{Y})
		\right\Vert_p^p
		\right] 
		\le\tilde{c}(p,\bar{\ga},\la)
		\Vert\Phi\Vert_\infty
		\left(
		N^{-1/4}+
		\left\Vert
		\max_{0\le k<\lfrf{N^{1/6}}^6}\alpha (Y_{k})^{\lfrf{N^{1/6}}-1}
		\right\Vert_{p/2}^{1/2}
		\right).
		\end{equation*}
		
	\end{lemma}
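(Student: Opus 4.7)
My plan is to start from the pointwise (in the environment $\mathbf{y}$) $L^p$-estimate \eqref{eq:bound} and integrate it against $\mathrm{Law}(\mathbf{Y})$, splitting this outer integral according to the ``good'' event $C_N^\la$ of Lemma \ref{lem:WtypicalY}. Write
$$A(\mathbf{y}):=\max_{0<t\le\lfrf{N^{1/6}}^6-\lfrf{N^{1/6}}^3}\P\!\left(Z_{0,t+\lfrf{N^{1/6}}^3}^{x_0,\mathbf{y}}\ne Z_{t,t+\lfrf{N^{1/6}}^3}^{x_0,\mathbf{y}}\right)$$
for the coupling-failure quantity appearing in \eqref{eq:bound}. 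Raising \eqref{eq:bound} to the $p$-th power (the left-hand side being the noise-only $L^p$-norm at fixed $\mathbf{y}$) and invoking the tower rule gives
$$\E\left[\left\Vert\tfrac{1}{N}\sum_{t=1}^N W_t(\mathbf{Y})\right\Vert_p^p\right]\le(2C_p\Vert\Phi\Vert_\infty)^p\,\E\left[\left(\tfrac{1}{\sqrt{N}}+2A(\mathbf{Y})\right)^{p/2}\right].$$

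Next I would estimate the outer expectation by splitting on $\{\mathbf{Y}\in C_N^\la\}$. On the good event, the bound of Lemma \ref{lem:WtypicalY} holds for every $t$; since the index $k+t$ runs over (a subset of) $[0,\lfrf{N^{1/6}}^6)$ as $t$ and $k$ vary, taking the maximum over $t$ yields
$$A(\mathbf{Y})\le\Bigl(\max_{0\le k<\lfrf{N^{1/6}}^6}\alpha(Y_k)\Bigr)^{\lfrf{N^{1/6}}-1}+\bar{\ga}^{(\la-1/2)\lfrf{N^{1/6}}^2-N^{1/6}}.$$
On the complement I would use the trivial $A(\mathbf{Y})\le 1$ together with $\P(\mathbf{Y}\notin C_N^\la)\le cN^{7/6}e^{-\nu N^{1/6}}$. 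Using the elementary inequality $(a+b+c)^{p/2}\le C_p'(a^{p/2}+b^{p/2}+c^{p/2})$, the good-event contribution decomposes into three separately treatable pieces.

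Finally, I would take the $p$-th root and apply the subadditivity $(x+y)^{1/p}\le x^{1/p}+y^{1/p}$ (valid since $p\ge 1$) to turn the bound into a sum of four terms. The first is the desired $N^{-1/4}$; the second, coming from $\bar{\ga}^{(\la-1/2)\lfrf{N^{1/6}}^2-N^{1/6}}$, decays like $e^{-c'N^{1/3}}$ after the square-root; the third, from the bad event, is $O((N^{7/6}e^{-\nu N^{1/6}})^{1/p})$ and decays faster than any polynomial; both are dominated by a constant multiple of $N^{-1/4}$ uniformly in $N\in\N^+$ and are absorbed into the constant. The fourth term equals $\E^{1/p}\!\left[(\max_k\alpha(Y_k))^{(p/2)(\lfrf{N^{1/6}}-1)}\right]$, which rewrites precisely as $\Vert\max_k\alpha(Y_k)^{\lfrf{N^{1/6}}-1}\Vert_{p/2}^{1/2}$, the second term in the claimed bound. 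Collecting all the $(p,\bar{\ga},\la)$-dependent constants into a single $\tilde{c}(p,\bar{\ga},\la)$ concludes the proof.

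The argument is essentially a careful orchestration of estimates already in place — the $L$-mixing bound \eqref{eq:bound}, the coupling estimate of Lemma \ref{lem:central}, and the typical-environment bound of Lemma \ref{lem:WtypicalY} — so no fundamentally new probabilistic input is needed. The only mildly delicate point will be verifying that the various exponentially small terms ($\bar{\ga}^{\Theta(N^{1/3})}$ and $(N^{7/6}e^{-\nu N^{1/6}})^{1/p}$) are dominated by $N^{-1/4}$ uniformly in $N$, up to a finite multiplicative factor, so that a single constant $\tilde{c}$ works for \emph{all} $N\in\N^+$ rather than only for $N$ large.
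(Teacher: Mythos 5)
Your proposal is correct and follows essentially the same route as the paper's proof: start from the $L$-mixing bound \eqref{eq:bound}, split the outer expectation over $\mathbf{Y}$ on the good event $C_N^\la$ of Lemma~\ref{lem:WtypicalY}, apply the trivial bound $\left\Vert\frac{1}{N}\sum_t W_t\right\Vert_p\le 2\Vert\Phi\Vert_\infty$ on the bad event, identify $\E^{1/p}\!\left[X^{p/2}\right]=\Vert X\Vert_{p/2}^{1/2}$, and absorb the exponentially small terms into the $N^{-1/4}$ term via a uniform constant. The only superficial difference is bookkeeping: where you split $(a+b+c)^{p/2}$ and then take $p$-th roots term by term, the paper applies the $L^{p/2}$ triangle inequality inside the square root before taking the $p$-th root, but both need the same WLOG reduction to $p\ge 2$ and lead to the identical bound.
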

	\begin{proof}
		Without the loss of generality, we may assume that $p\ge 2$. Clearly on
		$C_N^\la$
		\begin{align*}
			\max_{0<t\le \lfrf{N^{1/6}}^6-\lfrf{N^{1/6}}^3}
			\P \left(
			Z_{0,t+\lfrf{N^{1/6}}^3}^{x_0,\mathbf{y}}
			\ne
			Z_{t,t+\lfrf{N^{1/6}}^3}^{x_0,\mathbf{y}}
			\right) \le&\\ 
			\max_{0<t\le \lfrf{N^{1/6}}^6-\lfrf{N^{1/6}}^3}
			\left(\max_{0\le k<\lfrf{N^{1/6}}^3}\alpha (y_{k+t})\right)^{\lfrf{N^{1/6}}-1} +
			\bar{\ga}^{\left(\la-\frac{1}{2}\right)\lfrf{N^{1/6}}^2-N^{1/6}} \le&\\
			\left(\max_{0\le k<\lfrf{N^{1/6}}^6}\alpha (y_{k})\right)^{\lfrf{N^{1/6}}-1} +
			\bar{\ga}^{\left(\la-\frac{1}{2}\right)\lfrf{N^{1/6}}^2-N^{1/6}} &
		\end{align*}
		holds, hence by \eqref{eq:bound},
		we can write
		\begin{align*}
			\E^{1/p}\left[
			\left\Vert 
			\frac{1}{N}\sum_{t=1}^{N} W_t(\mathbf{Y})
			\right\Vert_p^p
			\right] 
			\le&\\
			\E^{1/p}\left[
			\left\Vert 
			\frac{1}{N}\sum_{t=1}^{N} W_t(\mathbf{Y})
			\right\Vert_p^p\ind_{\mathbf{Y}\in C_N^\la}
			\right]
			+
			\E^{1/p}\left[
			\left\Vert 
			\frac{1}{N}\sum_{t=1}^{N} W_t(\mathbf{Y})
			\right\Vert_p^p\ind_{\mathbf{Y}\notin C_N^\la}
			\right]
			\le&\\
			2\Vert\Phi\Vert_\infty \left[C_p
			\left(
			\frac{1}{\sqrt{N}}+2\bar{\ga}^{\left(\la-\frac{1}{2}\right)\lfrf{N^{1/6}}^2-N^{1/6}}
			+2
			\left\Vert
			\max_{0\le k<\lfrf{N^{1/6}}^6}\alpha (Y_{k})^{\lfrf{N^{1/6}}-1}
			\right\Vert_{p/2} 
			\right)^{1/2}
			\right.+&\\
			\left.		
			+\P\left(\mathbf{Y}\notin C_N^\la\right)^{1/p}
			\right]&\\
		\end{align*}
		The square root function is subadditive hence by Lemma \ref{lem:WtypicalY}, there exists $\tilde{c}(p,\bar{\ga},\la)$ depending only on $p$, $\bar{\ga}$ and $\la$ such that
		\begin{align*}
			C_p
			\left(
				\frac{1}{\sqrt{N}}+2\bar{\ga}^{\left(\la-\frac{1}{2}\right)\lfrf{N^{1/6}}^2-N^{1/6}}
				+2
				\left\Vert
				\max_{0\le k<\lfrf{N^{1/6}}^6}\alpha (Y_{k})^{\lfrf{N^{1/6}}-1}
				\right\Vert_{p/2} 
			\right)^{1/2}	
			+\P\left(\mathbf{Y}\notin C_N^\la\right)^{1/p} \le&\\
			C_p\left(
			N^{-1/4}+
			\sqrt{2}\bar{\ga}^{\frac{1}{2}\left[\left(\la-\frac{1}{2}\right)\lfrf{N^{1/6}}^2-N^{1/6}\right]}
			+
			\sqrt{2}
				\left\Vert
			\max_{0\le k<\lfrf{N^{1/6}}^6}\alpha (Y_{k})^{\lfrf{N^{1/6}}-1}
			\right\Vert_{p/2}^{1/2}
			\right) 
			+&\\
			c^{1/p}N^{\frac{7}{6p}}e^{-\frac{\nu}{p}N^{1/6}}
			\le&\\
			\frac{\tilde{c}(p,\bar{\ga},\la)}{2}
			\left(
			N^{-1/4}+
				\left\Vert
			\max_{0\le k<\lfrf{N^{1/6}}^6}\alpha (Y_{k})^{\lfrf{N^{1/6}}-1}
			\right\Vert_{p/2}^{1/2}
			\right)
		\end{align*}
		Finally, we obtain the desired upper bound
		\begin{equation*}
		\E^{1/p}\left[
		\left\Vert 
		\frac{1}{N}\sum_{t=1}^{N} W_t(\mathbf{Y})
		\right\Vert_p^p
		\right] 
		\le 	
		\tilde{c}(p,\bar{\ga},\la)
		\Vert\Phi\Vert_\infty
		\left(
		N^{-1/4}+
		\left\Vert
		\max_{0\le k<\lfrf{N^{1/6}}^6}\alpha (Y_{k})^{\lfrf{N^{1/6}}-1}
		\right\Vert_{p/2}^{1/2}
		\right)
		\end{equation*}
		which completes the proof.
	\end{proof}
	
	\subsection{Proof of Theorem \ref{thm:TV}}
	
	For $A\in\B$ arbitrary and any decomposition of $A$ into disjoint and measurable sets $A=\cup_i A_i$,
	we have
	\begin{align*}
		\sum_i |\E(\mu_{n}(\mathbf{Y},A_i))-\E(\mu_{n+1}(\mathbf{Y},A_i))|
		&\le
		\E\left[\sum_i |\mu_{n}(\mathbf{Y},A_i)-\mu_{n+1}(\mathbf{Y},A_i)|\right]
		\\
		&\le 
		\E \left(\dtv (\mu_{n}(\mathbf{Y},\cdot),\mu_{n+1}(\mathbf{Y},\cdot))\right).
	\end{align*}
	Taking the supremum of the left-hand side, we get 
	\begin{equation*}
	\dtv\left(\E(\mu_{n}(\mathbf{Y},\cdot)),\E(\mu_{n+1}(\mathbf{Y},\cdot))\right)\le\E \left(\dtv (\mu_{n}(\mathbf{Y},\cdot),\mu_{n+1}(\mathbf{Y},\cdot))\right).
	\end{equation*}
	As easily seen, $\mu_{n}(A)=\E (\mu_{n}(\mathbf{Y},A))$ holds for $A\in\B$, so we infer that
	\begin{equation*}
		\dtv (\mu_{n}, \mu_{n+1}) \le \E \left(\dtv (\mu_{n}(\mathbf{Y},\cdot),\mu_{n+1}(\mathbf{Y},\cdot))\right).
	\end{equation*}
	Then it follows from Corollary \ref{cor:dtv} that
	\begin{equation*}
		\sum_{n=1}^\infty \dtv (\mu_n, \mu_{n+1}) < \infty,
	\end{equation*}
	hence $\mu_n$, $n\in\N$ is a Cauchy sequence in the complete metric space $(\M_1, \dtv)$.
	Hence it converges to some probability $\mu_{\ast\ast}$ as $n\to\infty$.
	The claimed convergence rate follows from \eqref{eq:dtvest} with $p=1$:
	\begin{align*}
		\dtv (\mu_{N}, \mu_{\ast\ast}) 
		&\le 
		\sum_{n=N}^{\infty} \dtv (\mu_{n}, \mu_{n+1}) \le
		\sum_{n=N}^{\infty} \E \left(\dtv (\mu_{n}(\mathbf{Y},\cdot),\mu_{n+1}(\mathbf{Y},\cdot))\right) \\
		&\le 
		2\sum_{n=N}^{\infty}
		\left[
		\E\left(\max_{0\le k<\lfrf{n^{1/3}}^3} \alpha (Y_k)^{\lfrf{n^{1/3}}-1}\right)
		+\bar{\ga}^{\left(\la-\frac{1}{2}\right)\lfrf{n^{1/3}}^2-n^{1/3}}
		+cn^{2/3} e^{-\nu n^{1/3}}
		\right].
	\end{align*}
	
	It remains to prove that $\mu_{\ast}$ and $\mu_{\ast\ast}$ coincide. It is clear that
	for every $A\in\B$,
	\begin{align*}
		\mu_{\ast\ast} (A) = \lim_{n\to\infty} \mu_{n} (A) =
		\lim_{n\to\infty} \E (\mu_{n} (\mathbf{Y}, A)) =
		\E (\lim_{n\to\infty} \mu_{n} (\mathbf{Y}, A)) =
		\E (\mu_{\ast} (\mathbf{Y}, A)) =
		\mu_{\ast} (A),
	\end{align*}
	hence $\mu_{\ast} = \mu_{\ast\ast}$. By Remark \ref{rem:icd}, easily follows that the limit distribution $\mu_{\ast}$ is independent of the initial state $x_0$.
	\qed

	\subsection{Proof of Theorem \ref{thm:LLN}}
	
	Let $N\ge 1$ arbitrary integer, $1\le p<\infty$ and consider the following estimate.
	\begin{align}\label{eq:triangle}
	\begin{split}
	&
	\left\Vert
	\frac{1}{N}\sum_{t=1}^N \Phi \left(Z_{0,t}^{x_0,\mathbf{Y}}\right) - \int_\X \Phi (z) \,\mu_\ast (\dint z)
	\right\Vert_p 
	\le
	\left\Vert
	\frac{1}{N}\sum_{t=0}^{N-1}\int_\X \Phi (z)\,\left[\mu_\ast (S^{t}\mathbf{Y},\dint z)
	-\mu_{\ast} (\dint z)\right]
	\right\Vert_p \\
	&+
	\left\Vert
	\frac{1}{N}\sum_{t=1}^{N} \int_\X \Phi(z)\,(\mu_t-\mu_\ast) (S^{t-1}\mathbf{Y},\dint z)
	\right\Vert_p \\
	&+\left\Vert
	\frac{1}{N}\sum_{t=1}^N \left(\Phi \left(Z_{0,t}^{x_0,\mathbf{Y}}\right) - \int_\X \Phi (z)\,
	\mu_t (S^{t-1}\mathbf{Y},\dint z) \right)
	\right\Vert_p
	\end{split} 	
	\end{align}
	
	As we already mentioned in Section \ref{sec:main}, the stochastic process $Y$ is strongly stationary and ergodic hence the left shift
	$S:\Y^\Z\to\Y^\Z$ is an ergodic endomorphism of the probability space $(\Y^\Z,\A^{\otimes\Z},\law{Y})$, moreover 
	$\Y^\Z\ni\mathbf{y}\mapsto\int_\X \Phi (z)\,\mu_\ast (\mathbf{y},\dint z)$
	is obviously in $L^1$ hence Birkhoff's ergodic theorem implies that
	\begin{align*}
	\frac{1}{N}\sum_{t=0}^{N-1}\int_\X \Phi (z)\,\mu_\ast (S^{t}\mathbf{Y},\dint z)
	\to
	\int_\X \Phi (z) \,\mu_{\ast} (\dint z),\,N\to\infty,
	\end{align*}
	almost surely and also in $L^p$ due to Lebesgue's dominated convergence theorem.
	
	By the strong stationary property of $Y$ again, for the second term, we have
	\begin{align*}
		\left\Vert
		\frac{1}{N}\sum_{t=1}^{N} \int_\X \Phi(z)\,(\mu_t-\mu_\ast) (S^{t-1}\mathbf{Y},\dint z)
		\right\Vert_p 
		&\le 
		\frac{\Vert\Phi\Vert_\infty}{N}\sum_{t=1}^N 
		\Vert 
		\dtv \left(\mu_t(\mathbf{Y},\cdot),\mu_{\ast}(\mathbf{Y},\cdot)\right)
		\Vert_p \\
		&\le
		\frac{\Vert\Phi\Vert_\infty}{N}\sum_{t=1}^N
		\sum_{n=t}^{\infty} 
		\Vert 
		\dtv \left(\mu_n(\mathbf{Y},\cdot),\mu_{n+1}(\mathbf{Y},\cdot)\right)
		\Vert_p
	\end{align*}
	which is a C\`{e}saro sum and due to Lemma \ref{lem:dtvLp}, the general
	term tends to zero thus we obtain
	\begin{equation*}
		\left\Vert
		\frac{1}{N}\sum_{t=1}^{N} \int_\X \Phi(z)\,(\mu_t-\mu_\ast) (S^{t-1}\mathbf{Y},\dint z)
		\right\Vert_p \to 0,\,N\to\infty.
	\end{equation*}
	
	Finally, due to the definition of $\mu_t (\cdot,\cdot)$, for any fixed $\mathbf{y}\in\Y^\Z$, the law of 
	$Z_{0,t}^{x_0,\mathbf{y}}$ equals to $\mu_t (S^{t-1}\mathbf{y},\cdot)$ hence for the last term, we have
	\begin{align*}
		&\left\Vert
		\frac{1}{N}\sum_{t=1}^N \left(\Phi \left(Z_{0,t}^{x_0,\mathbf{Y}}\right) - \int_\X \Phi (z)\,
		\mu_t (S^{t-1}\mathbf{Y},\dint z) \right)
		\right\Vert_p \le 
		2\frac{N-\lfrf{N^{1/6}}^6}{N}\Vert\Phi\Vert_\infty + \\
		&+
		\E^{1/p}\left[
		\E\left(\left|
		\frac{1}{N}\sum_{t=1}^{\lfrf{N^{1/6}}^6} \left(\Phi \left(Z_{0,t}^{x_0,\mathbf{Y}}\right) - \int_\X \Phi (z)\,
		\mu_t (S^{t-1}\mathbf{Y},\dint z) \right)
		\right|^p 
		\middle|\sigma (\mathbf{Y})\right)
		\right] \\
		&\le
		\frac{12\Vert\Phi\Vert_\infty}{N^{1/6}}
		+
		\E^{1/p}\left[
		\left\Vert 
		\frac{1}{N}\sum_{t=1}^{N} W_t(\mathbf{Y})
		\right\Vert_p^p
		\right].
	\end{align*}
	
	According to Lemma \ref{lem:central2}, exists $\tilde{c}(p,\bar{\ga},\la)>0$
	such that
	\begin{equation*}
	\E^{1/p}\left[
	\left\Vert 
	\frac{1}{N}\sum_{t=1}^{N} W_t(\mathbf{Y})
	\right\Vert_p^p
	\right] 
	\le\tilde{c}(p,\bar{\ga},\la)
	\Vert\Phi\Vert_\infty
	\left(
	N^{-1/4}+
	\left\Vert
	\max_{0\le k<\lfrf{N^{1/6}}^6}\alpha (Y_{k})^{\lfrf{N^{1/6}}-1}
	\right\Vert_{p/2}^{1/2}
	\right)
	\end{equation*}
	hence we obtain
	\begin{align*}
	\left\Vert
	\frac{1}{N}\sum_{t=1}^N \left(\Phi \left(Z_{0,t}^{x_0,\mathbf{Y}}\right) - \int_\X \Phi (z)\,
	\mu_t (S^{t-1}\mathbf{Y},\dint z) \right)
	\right\Vert_p &\le\\
	\frac{12\Vert\Phi\Vert_\infty}{N^{1/6}}
	+
	\tilde{c}(p,\bar{\ga},\la)
	\Vert\Phi\Vert_\infty
	\left(
	N^{-1/4}+
	\left\Vert
	\max_{0\le k<\lfrf{N^{1/6}}^6}\alpha (Y_{k})^{\lfrf{N^{1/6}}-1}
	\right\Vert_{p/2}^{1/2}
	\right),&
	\end{align*}
	where by Lemma \ref{lem:max}, the upper bound tends to zero as $N\to\infty$.

	To sum up,
	\begin{equation*}
		\left\Vert
		\frac{1}{N}\sum_{t=1}^N \Phi (X_t) - \int_\X \Phi (z)\,\mu_\ast (\dint z)
		\right\Vert_p \to 0,\,N\to\infty
	\end{equation*}
	because the laws of $X_t$ and $Z_{0,t}^{x_0,\mathbf{Y}}$ coincides. This completes the proof of Theorem \ref{thm:LLN}.
	
	\begin{remark}\label{borkoff}
		{\rm Birkhoff's ergodic theorem does not provide an uppper bound for the difference
		between time and space averages hence, we have a convergence rate for every term
		in \eqref{eq:triangle} except for the first one. However, in the ideal case this term 
		is of the order $1/\sqrt{N}$ and this can be shown for $Y$ with suitably
		favourable ergodic properties.}
	\end{remark}

	\subsection{Proof of Theorem \ref{thm:gen}}\label{sec:proof:gen}
	
	In order to keep the explanation compact, we omit the details of the proof and just 
	indicate how our lemmas should be modified to make them applicable in this setting. 
	
	Let $p\ge 1$ be as in Assumption \ref{as:drift1}, \ref{as:LT1} and \ref{as:minor1} and 
	we introduce the auxiliary processes 
	\begin{equation*}
	\left(\tilde{X}_t^j\right)_{t\in\N}=
	\left(Z_{0,pt}^{Z'_j,\mathbf{Y}}\right)_{t\in\N},\,\,
	j=0,\ldots,p-1,
	\end{equation*}
	where $Z'_j=Z_{-j,0}^{x_0,\mathbf{Y}}$.	
	Note that $\tilde{X}_t^j=Z_{-j,pt}^{x_0,\mathbf{Y}}=Z_{0,pt+j}^{x_0,S^{-j}\mathbf{Y}}$ and hence, by the 
	strong stationarity of $Y_t$, $t\in\Z$, the laws of $\tilde{X}_t^j$ and $X_{pt+j}$ coincide. 
	Furthermore, the process $\tilde{X}_t^j$, $t\in\N$ also can be considered
	as Markov chain in a random environment driven by
	\begin{equation*}
	\tilde{Y}_t^j=(Y_{p(t-1)+j},\ldots, Y_{pt+j-1})\in\Y^p,\,\, t\in\Z.
	\end{equation*}
	We denote the parametric family of stochastic kernels corresponding to $\tilde{X}_t^j$ by $\tilde{Q}$.
	It is easy to check that $\tilde{Q}:\Y^p\times\X\times\B\to [0,1]$ is independent of 
	$j$ and arises as the $p$-times composition of $Q(y)s$ with itself, that is,
	\begin{equation*}
	[\tilde{Q}((y_{1},\ldots, y_{p}))\phi](x)=[Q(y_p)\ldots Q(y_1)\phi](x)
	\end{equation*}
	holds for arbitrary non-negative measurable $\phi:\X\to\R_+$, $(y_{1},\ldots, y_{p})\in\Y^p$ and $x\in\X$. 
	
	Assumptions in 
	Section \ref{sec:ram} imply that Assumptions \ref{as:drift}, \ref{as:LT} and \ref{as:minor} 
	hold for the processes $\left(\tilde{X}_t^j\right)_{t\in\N}$, 
	$j\in\{0,\ldots,p-1\}$, where the corresponding $\alpha$, $\kappa$ and $K$ are independent of the environment.
	In Assumption \ref{as:drift1}, the one-step bound i.e.\ inequality \eqref{eq:extra} gives the estimate
	\begin{align}\label{eq:univ}
	\E \left(V(Z_{-j,0}^{x_0,\mathbf{y}})\right) 
	\le
	K^j V(x_0) + \sum_{i=1}^j K^i\le K^p (V(x_0)+p)		
	\end{align}
	which is uniform in $\mathbf{y}\in\Y^\Z$. Clearly, for any $\mathbf{y}$ fixed,
	$Z_{-j,0}^{x_0,\mathbf{y}}$ is $\G_0$ measurable and by inequality
	\eqref{eq:extra}, Lemma \ref{lem:central} is applicable for proving that for $\law{Y}$-$\P$-a.s.{}
	$\mathbf{y}\in\Y^\Z$, the law of $Z_{-j,pt}^{x_0,\mathbf{y}}$
	converges to a probability law as $t\to\infty$, and the limit
	is independent of the choice of $j$. From this follows that
	Theorem \ref{thm:TV} applies for $\left(\tilde{X}_t^j\right)_{t\in\N}$, $j\in\{0,\ldots,p-1\}$ moreover, 
	the limiting laws, which are subsequences of $\left(\mu_n=\law{X_n}\right)_{n\in\N}$, are equal. But then
	the whole sequence
	$\left(\mu_n\right)_{n\in\N}$ converges to some $\mu_{\ast}$ in total variation.
	
	The environment $\tilde{Y}_t$, $t\in\Z$ is also ergodic hence 
	by Theorem \ref{thm:LLN}, we have a law of large numbers in $L^q$ for 
	each $\left(\Phi(\tilde{X}_t^j)\right)_{t\in\N}$, $j\in\{0,\ldots,p-1\}$
	and $1\le q <\infty$.
	So, by the triangle inequality, we can write
	\begin{align*}
	\left\Vert
	\frac{1}{N}\sum_{k=1}^{N}\Phi (X_k)
	-
	\int\limits_{\X}
	\Phi (z)
	\mu_{\ast}(\dint z)
	\right\Vert_q
	\le
	\frac{1}{p}
	\sum_{j=0}^{p-1}
	\left\Vert
	\frac{p}{N}
	\sum_{k=0}^{\lfrf{\frac{N}{p}}}
	\Phi (\tilde{X}_k^j)
	-
	\int\limits_{\X}
	\Phi (z)
	\mu_{\ast}(\dint z)
	\right\Vert_q
	+
	O(N^{-1})
	\end{align*}
	showing that the law of large numbers holds true in $L^q$ also
	for $\Phi (X_t)$. This completes the proof.\hfill $\Box$

	\section{Appendix}\label{lm}

For the reader's convenience, we recall a concept of mixing defined in \cite{Lmixing} 
which was used in some of the estimations above.
Let $\mathcal{G}_t$, $t\in\N$ be an increasing sequence of sigma-algebras and let $\mathcal{G}^+_t$, $t\in\N$
be a decreasing sequence of sigma-algebras such that, for each $t\in\N$, 
$\mathcal{G}_t$ is independent of $\mathcal{G}^+_t$.

Let  $W_t$, $t\in\N$ be a real-valued stochastic process. For each $p\geq 1$, introduce
$$
M_p(W):=\sup_{t\in\N} E^{1/p}[|W_t|^p].
$$
For each process $W$ such that $M_1(W)<\infty$ define, for each $p\geq 1$, 
$$
\ga_p(W,\tau):=\sup_{t\geq\tau}E^{1/p}[|W_t-E[W_t|\mathcal{G}_{t-\tau}^+]|^p],\ \tau\in\N,\ 
\Gamma_p(W):=\sum_{\tau=0}^{\infty} \ga_p(W,\tau).
$$

For some $p\geq 1$, the process $W$ is called 
\emph{$L$-mixing of order $p$} with respect to $(\mathcal{G}_t,\mathcal{G}^+_t)$, $t\in\N$ if
it is adapted to $(\mathcal{G}_t)_{t\in\N}$ and $M_p(W)<\infty$, $\Gamma_p(W)<\infty$. We say that $W$
is \emph{$L$-mixing} if it is $L$-mixing of order $p$ for all $p\geq 1$.

We recall Lemma 2.1 of \cite{Lmixing}.

\begin{lemma}\label{fyffes} Let 
$\mathcal{G}\subset\mathcal{F}$ be a sigma-algebra,
$X$, $Y$ random variables with $E^{1/p}[|X|^p]<\infty$, $E^{1/p}[|Y|^p]<\infty$ with some $p\geq 1$.
If ${Y}$ is $\mathcal{G}$-measurable then
$$
E^{1/p}[|X-E[X\vert\mathcal{G}]|^p]\leq 2E^{1/p}[|X-Y|^p]
$$
holds.\hfill $\Box$ 
\end{lemma}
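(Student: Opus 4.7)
The plan is to bound the left-hand side via the triangle inequality in $L^p$ and then exploit the fact that conditional expectation is an $L^p$-contraction. Inserting $Y$ gives
\[
E^{1/p}[|X-E[X\vert\mathcal{G}]|^p] \leq E^{1/p}[|X-Y|^p] + E^{1/p}[|Y-E[X\vert\mathcal{G}]|^p],
\]
so it suffices to show that the second term on the right is bounded by $E^{1/p}[|X-Y|^p]$; combined with the first term, this yields the claimed factor of $2$.

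Next I would use the $\mathcal{G}$-measurability of $Y$: together with $Y\in L^p$ this gives $Y = E[Y\vert\mathcal{G}]$ almost surely, so by linearity of conditional expectation
\[
Y - E[X\vert\mathcal{G}] = E[Y\vert\mathcal{G}] - E[X\vert\mathcal{G}] = E[Y-X\vert\mathcal{G}] \quad \text{a.s.}
\]
Applying the conditional Jensen inequality to the convex function $t \mapsto |t|^p$ (valid for $p \geq 1$) gives $|E[Y-X\vert\mathcal{G}]|^p \leq E[|Y-X|^p\vert\mathcal{G}]$ almost surely, and taking expectations followed by $p$-th roots produces $E^{1/p}[|Y-E[X\vert\mathcal{G}]|^p] \leq E^{1/p}[|X-Y|^p]$. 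Combining this with the triangle inequality step closes the argument.

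There is no real obstacle in this proof. The only point requiring brief attention is integrability of $E[X\vert\mathcal{G}]$ so that the triangle inequality in $L^p$ is applicable; this is immediate from conditional Jensen applied to $X$ itself, which yields $E^{1/p}[|E[X\vert\mathcal{G}]|^p] \leq E^{1/p}[|X|^p] < \infty$. Everything else is a routine application of the triangle inequality and of Jensen's inequality.
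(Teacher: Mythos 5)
Your proof is correct and complete. The paper does not actually supply a proof of this lemma; it simply recalls it as Lemma~2.1 of the cited reference (Gerencs\'er, \emph{Stochastics and Stochastic Reports}, 1989), marking it with $\Box$. Your argument — Minkowski's inequality after inserting $Y$, then using that $Y=E[Y\mid\mathcal{G}]$ a.s.\ (since $Y\in L^p\subset L^1$ is $\mathcal{G}$-measurable) and conditional Jensen to show $\Vert Y-E[X\mid\mathcal{G}]\Vert_p=\Vert E[Y-X\mid\mathcal{G}]\Vert_p\le\Vert X-Y\Vert_p$ — is the standard route and closes the gap cleanly, including the integrability check for $E[X\mid\mathcal{G}]$.
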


Finally, a trivial consequence of Theorem 1.1 of \cite{Lmixing} is formulated.

\begin{lemma}\label{inek}
For an $L$-mixing process $W$ of order $p\geq 2$ satisfying $E[W_t]=0$, $t\in\N$,
$$
E^{1/p}\left[\left|\sum_{i=1}^N W_i\right|^p\right]\leq C_p N^{1/2} M_p^{1/2}(W)\Gamma_p^{1/2}(W),
$$
holds for each $N\geq 1$ with a constant $C_p$ that does not depend either on $N$ or on $W$.
\hfill $\Box$
\end{lemma}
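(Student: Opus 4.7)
My primary plan is to invoke Theorem~1.1 of \cite{Lmixing} directly, of which this lemma is stated to be a trivial consequence. To motivate it, I would first dispatch the case $p=2$ by a covariance computation. For each pair $(i,j)$ with $j>i$, set $\tau=j-i$ and decompose
\[
W_j = \bigl(W_j - \E[W_j\mid\mathcal{G}^+_{j-\tau}]\bigr) + \E[W_j\mid\mathcal{G}^+_{j-\tau}]=: R_j^\tau + T_j^\tau.
\]
Since $\mathcal{G}_i\subset\mathcal{G}_{j-\tau}$ is independent of $\mathcal{G}^+_{j-\tau}$ and $\E[W_i]=\E[W_j]=0$, one gets $\E[W_i T_j^\tau]=\E[W_i]\,\E[T_j^\tau]=0$, so $|\E[W_iW_j]|=|\E[W_i R_j^\tau]|\le M_2(W)\ga_2(W,j-i)$ by Cauchy--Schwarz. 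Expanding $\E[S_N^2]$ over pairs would then yield $\E[S_N^2]\le NM_2(W)^2+2NM_2(W)\Gamma_2(W)\le 3NM_2(W)\Gamma_2(W)$, which proves the lemma for $p=2$ up to the numerical constant.

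For $p>2$, I would use the same decomposition but upgrade the orthogonality argument to a genuine $L^p$ martingale estimate. Fix $\tau$ and partition $\{1,\ldots,N\}$ into $\tau$ arithmetic progressions of step $\tau$. Along each progression, the variables $T_i^\tau$ admit, thanks to the independence $\mathcal{G}_t\perp\mathcal{G}^+_t$, an approximate martingale-difference structure with respect to a suitably shifted filtration, allowing Burkholder's inequality to control the partial sum in $L^p$. The remainder part $\sum_i R_i^\tau$ contributes at most $N\ga_p(W,\tau)$ by the triangle inequality, and balancing the two contributions by choosing $\tau\asymp N^{1/2}$ would recover the claimed $N^{1/2}$ rate with a constant $C_p$ depending only on $p$ and uniform in $N$ and $W$.

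The main obstacle is the identification of a genuine martingale structure: the conditional expectations $T_i^\tau$ are only \emph{approximately} orthogonal past a lag of $\tau$, so extracting a true martingale-difference sequence requires either a further refinement of the filtration or an appeal to a Rosenthal-type inequality for weakly dependent sequences. Absorbing the resulting approximation error into an $N$-independent constant, without spoiling the $N^{1/2}$ rate, is the point where I would expect the technicalities to concentrate---and, most pragmatically, the reason to defer to the proof already carried out in \cite{Lmixing}.
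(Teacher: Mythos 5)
Your primary plan—deferring to Theorem~1.1 of \cite{Lmixing}—is exactly what the paper does; the lemma is stated there without proof as a direct citation, so you have matched the intended route. Your $p=2$ sanity check is sound (note that the final numerical simplification uses $M_2(W)=\ga_2(W,0)\le\Gamma_2(W)$, which holds with the appendix's convention that the sum defining $\Gamma_p$ starts at $\tau=0$), but it and the $p>2$ sketch are supplementary rather than part of the paper's argument.
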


	\bibliographystyle{plain}

\end{document}